\documentclass[10pt]{amsart}
\usepackage{amsmath}
\usepackage{amsfonts}
\usepackage{amssymb}
\usepackage{amsthm}
\usepackage{url}
\usepackage{tikz-cd}
\usepackage{dsfont}
\usepackage{graphicx}
\usepackage{caption}
\usepackage{subcaption}
\usepackage{comment} 
\usepackage{stmaryrd}
\usepackage{hyperref}
\usepackage{todonotes}
\usepackage{color}
\usepackage{enumerate}
\usepackage{graphicx}

% \scriptscriptstyle

\numberwithin{equation}{section}

\newtheorem{proposition}{Proposition}[section]
\newtheorem{lemma}[proposition]{Lemma}
\newtheorem{theorem}[proposition]{Theorem}
\newtheorem{corollary}[proposition]{Corollary}
\newtheorem{conjecture}[proposition]{Conjecture}

\theoremstyle{definition}
\newtheorem{remark}[proposition]{Remark}
\newtheorem{definition}[proposition]{Definition}
\newtheorem{example}[proposition]{Example}

\DeclareMathOperator{\Bl}{Bl}

\DeclareMathOperator{\Id}{Id}
\DeclareMathOperator{\tr}{tr}

\DeclareMathOperator{\GL}{GL}

\DeclareMathOperator{\Aut}{Aut}

\DeclareMathOperator{\Ch}{Ch}
\DeclareMathOperator{\SL}{SL}

\DeclareMathOperator{\Lie}{Lie}
\DeclareMathOperator{\SU}{SU}

\renewcommand{\L}{\mathcal{L}}
\renewcommand{\phi}{\varphi}
\DeclareMathOperator{\Stab}{Stab}

\DeclareMathOperator{\DF}{DF}
\DeclareMathOperator{\Quot}{Quot}
\DeclareMathOperator{\Hilb}{Hilb}
\DeclareMathOperator{\Hom}{Hom}

\DeclareMathOperator{\W}{W}

\newcommand{\C}{\mathbb{C}}

\newcommand{\Q}{\mathbb{Q}}

\newcommand{\pr}{\mathbb{P}}
\renewcommand{\epsilon}{\varepsilon}
\renewcommand{\H}{\mathcal{H}}

\newcommand{\M}{\mathcal{M}}

\newcommand{\scO}{\mathcal{O}}
\newcommand{\U}{\mathcal{U}}
\newcommand{\X}{\mathcal{X}}

\newcommand{\ddb}{i\partial \bar\partial}
\newcommand{\scL}{\mathcal{L}}

\newcommand{\scV}{\mathcal{V}}
\newcommand{\scZ}{\mathcal{Z}}
\newcommand{\Y}{\mathcal{Y}}

\newcommand{\scU}{\mathcal{U}}
\newcommand{\ddc}{i\partial\bar\partial}

\newcommand{\su}{\mathfrak{su}}
\DeclareMathOperator{\wt}{wt}
\newcommand{\E}{\mathcal{E}}

\pagestyle{headings} \setcounter{tocdepth}{2}
\title[Stability of fibrations and optimal symplectic connections]{Moduli theory, stability of fibrations and optimal symplectic connections}

\author[Ruadha\'i Dervan and Lars Martin Sektnan]{Ruadha\'i Dervan and Lars Martin Sektnan}
\address{Ruadha\'i Dervan, DPMMS, Centre for Mathematical Sciences, Wilberforce Road, Cambridge CB3 0WB, United Kingdom}
\email{R.Dervan@dpmms.cam.ac.uk}

\address{Lars Martin Sektnan, Institut for Matematik, Aarhus University, 8000, Aarhus C, Denmark}
\email{lms@math.au.dk}

\begin{document}

\begin{abstract}  

K-polystability is, on the one hand, conjecturally equivalent to the existence of certain canonical K\"ahler metrics on polarised varieties, and, on the other hand, conjecturally gives the correct notion to form moduli.  We introduce a notion of stability for families of K-polystable varieties, extending the classical notion of slope stability of a bundle, viewed as a family of K-polystable varieties via the associated projectivisation. We conjecture that this is the correct condition for forming moduli of fibrations. 

Our main result relates this stability condition to K\"ahler geometry: we prove that the existence of an optimal symplectic connection implies semistability of the fibration. An optimal symplectic connection is a choice of fibrewise constant scalar curvature K\"ahler metric, satisfying a certain geometric partial differential equation. We conjecture that the existence of such a connection is equivalent to polystability of the fibration. We prove a finite dimensional analogue of this conjecture, by describing a GIT problem for fibrations embedded in a fixed projective space, and showing that GIT polystability is equivalent to the existence of a zero of a certain moment map.

\end{abstract}

\maketitle

\section{Introduction}\label{sec:intro}

Riemann surfaces are classified by their genus. When the genus is at least two, the automorphism group of each curve is finite and there is a moduli space $\M_g$ parametrising such curves. In genus one, elliptic curves endowed with an ample line bundle have discrete automorphism group and are parametrised by their $j$-invariant, a complex number. There is a unique Riemann surface of genus zero: the Riemann sphere, $\pr^1$. In particular, the associated moduli space is just a point. 

The two basic aims of moduli theory are firstly to construct a space whose points are in bijection with the varieties under consideration, so that one obtains a classification of isomorphism classes of such varieties,  and secondly to understand the behaviour of varieties in families. While the moduli spaces above achieve the first aim, they do not give much understanding of the variation of varieties in families in the case of $\pr^1$. In fact, it is straightforward to give non-trivial examples of families $\pi: X \to B$ such that each fibre $X_b = \pi^{-1}(b)$ is isomorphic to $\pr^1$ for $b\in B$, but such that $X \ncong B \times \pr^1$: one can projectivise any non-trivial vector bundle of rank two over $B$.

Thus to answer the second basic goal of moduli theory in the simplest one-dimensional case, one is led to a different approach. Rather than considering the moduli space of varieties isomorphic to $\pr^1$, which is just a point, one usually studies \emph{vector bundles} over a fixed base $B$. 
Vector bundles, in turn, have a rich theory of their own, and a central object of study in algebraic geometry has been the moduli space of vector bundles over a fixed base.

In higher dimensions, the classical moduli problem is for \emph{polarised} varieties \cite[Section 5.1]{git}, namely varieties endowed with an ample line bundle. In order to obtain a separated (and optimistically, in certain cases, proper) moduli space of polarised varieties, it is well known that one must restrict to varieties satisfying a stability condition. The classical notions arising from geometric invariant theory (GIT) have failed to produce such a moduli space \cite{wang-xu}, and the modern replacement is K-stability \cite{tian-inventiones, donaldson-toric}, which is not a genuine GIT notion \cite{fineross06}. The Yau-Tian-Donaldson conjecture relates K-stability with differential geometry, through the expectation that (at least when $X$ is smooth) K-polystability should be equivalent to the existence of a constant scalar curvature  K\"ahler (cscK) metric in $c_1(L)$ \cite{tian-inventiones, donaldson-toric,yau}. While it is still an open problem to construct a moduli space of K-polystable varieties, it has recently been shown that one can construct a separated moduli space of smooth polarised varieties which admit a cscK metric \cite{moduli, fujiki-schumacher,odaka-compact, odaka-open,lwx1,lwx2}. Just as in the case of Riemann surfaces, however, while this moduli space fully achieves the first goal of moduli theory, it fails to give almost any information about the behaviour of polarised varieties in families, particularly for polarised varieties with large automorphism group.

The goal of the present work is to generalise the perspective of viewing (certain) $\pr^1$-fibrations in terms of vector bundles to much more general fibrations, and to all dimensions. We expect this approach to give a complete solution to the second basic aim of moduli theory. The new idea arises from the bundle theory. When one tries to form moduli spaces of vector bundles, one quickly realises that one must impose a stability condition in order to obtain a separated (and even proper) moduli space; the most appropriate such condition is  slope stability (though Gieseker stability also plays a prominent role). Through the Hitchin-Kobayashi correspondence of Donaldson-Uhlenbeck-Yau \cite{donaldson-surfaces,uhlenbeck-yau}, slope stability of a vector bundle is equivalent to the existence of a canonical metric, in the form of a Hermite-Einstein metric. 

Here we consider the moduli problem for \emph{polarised fibrations} over a fixed base $$\pi: (X,H) \to (B,L),$$ by which we mean that $\pi$ is a flat morphism between projective varieties, $L$ is ample, $H$ is relatively ample, and importantly each fibre $(X_b,H_b)$ is K-polystable. Just as in the bundle theory, in order to obtain a separated and proper moduli space, one expects a stability condition to play a crucial role. The main results are as follows:

\begin{enumerate}[(i)]
\item We introduce a notion of \emph{stability} for polarised fibrations. We conjecture that polystable polarised fibrations over a fixed base form a separated, projective moduli space. This would achieve the second aim of moduli theory.
\item We prove that the existence of a certain canonical metric on the total space of a polarised fibration implies our notion of semistability. The metrics are \emph{optimal symplectic connections}, which we previously introduced as a canonical choice of fibrewise cscK metric on such a fibration \cite[Section 3]{morefibrations}. We conjecture that  existence 
is equivalent to polystability of the fibration.
\item We prove a finite dimensional analogue of the previous conjecture. Namely, we define a GIT problem for fibrations embedded in a fixed projective space, and prove that GIT polystability is equivalent to the existence of a zero of a finite dimensional moment map we introduce on the fibration. 
\end{enumerate}

The above results generalise several of the classical and foundational results in the bundle theory. In particular we show that our notion of semistability extends the notion of slope semistability of a vector bundle $E \to (B,L)$, viewed in terms of the projectivisation $(\pr(E),\scO(1)) \to (B,L)$. The notion of an optimal symplectic connection generalises the notion of a Hermite-Einstein metric, again viewed on the projectivisation \cite[Proposition 3.19]{morefibrations}. Thus $(ii)$ above generalises the classical statement that a vector bundle admitting a Hermite-Einstein metric is slope semistable, and even extends the definition of stability to $\pr^m$-fibre bundles which do not arise as projectivisations of vector bundles. Much of $(iii)$ is motivated by work of Wang \cite{wang-existence-balanced-embeddings,wang-balanced-stable,wang-chow-stability}, and in particular part $(iii)$ above generalises Wang's result that Gieseker stability of a bundle is equivalent to the existence of a certain balanced metric on the bundle.  We now explain our work in more detail, beginning with the stability notion.

\subsection{Stability of fibrations} The definition of K-polystability involves two types of objects, one being a class of degenerations, called test configurations, and the other being a numerical invariant associated to each test configuration \cite{donaldson-toric, tian-inventiones}. A \emph{test configuration} for a polarised variety $(Y,L_Y)$ is a flat family $\C^*\circlearrowright (\Y,\L_{\Y}) \to \C$
 with $\L_Y$ relatively ample, and with fibre $(\Y_t,\L_{\Y_t}) \cong (Y,L_Y)$ for all $t \neq 0$. Each test configuration has an associated \emph{Donaldson-Futaki invariant} $\DF(\Y,\L_{\Y})$, defined using the weight of the $\C^*$-action on the vector spaces $H^0(\Y_0, \L_{\Y_0}^{ r})$ for $r \gg 0$. K-polystability then means that $\DF(\Y,\L_{\Y})\geq 0$ for all test configurations, with equality if and only if the test configuration is a \emph{product}, meaning that $(\Y_0, \L_{\Y_0}) \cong (Y,L_Y)$.

Now suppose that $\pi: (X,H) \to (B,L)$ is a flat family of K-polystable varieties over a projective base. By flatness, the pushforward $\pi_*(H^k)$ is a vector bundle for $k \gg 0$, and $X$ embeds in the projectivisation $X \hookrightarrow \pr(\pi_*(H^k))$. Let $\E \to B \times \C$ be a $\C^*$-degeneration of the bundle $\pi_*(H^k)$, so $\E$ is a flat coherent sheaf over $\C$ and $\E_t \cong \pi_*(H^k)$ for all $t \neq 0$. This induces a degeneration of $X$ inside $(\pr(\E),\scO(1)) \to (B,L) \times \C$, with $\scO(1)$ the relative hyperplane class induced by the projectivisation construction, by setting $\X = \overline{\C^*.X}$ and $\H = \scO(1)|_{\X}$, which is relatively ample over $B \times \C$. $\X$ admits another line bundle, by pulling back $L$ via the natural morphism $$(\X,\H) \to (B,L).$$ We call this setup a \emph{fibration degeneration}.  These arise naturally from a finite-dimensional GIT problem discussed below. 

A fibration degeneration induces a test configuration for $(X,jL+H)$ for $j \gg 0$ by considering $(\X,jL+\H)\to \C$. We show that the Donaldson-Futaki invariant admits an expansion in powers of $j$: $$\DF(\X,jL+\H) = j^n \W_0(\X,\H) + j^{n-1} \W_1(\X,\H) + O(j^{n-2}),$$ with $B$ of dimension $n$. Roughly speaking, the condition $\W_0(\X,\H) = 0$ means that the induced test configuration for a general fibre $(X_b,H_b)$ over $b \in B$ is a product. We say that a fibration is \emph{semistable} if $\W_0(\X,\H) \geq 0$, and when equality holds, we have $\W_1(\X,\H) \geq 0$. In fact the condition that $\W_0(\X,\H) \geq 0$ holds for arbitrary families of K-polystable varieties, and so the crucial part of our definition concerns the term $\W_1(\X,\H).$

The above generalises the notion of slope semistability of a vector bundle, which is typically defined in terms of subsheaves. If $E \to (B,L)$ is a vector bundle over a polarised variety and $F \subset E$ is a coherent subsheaf,  one obtains a $\C^*$-degeneration of $E$ given by $\E \to B \times \C$, flat over $\C$, with $\E_t \cong E$ for all $t \neq 0$ and $$\E_0 \cong F \oplus E/F.$$ In fact all degenerations of $E$ arise this way, provided one considers nested sequences of subsheaves. Projectivising gives a fibration degeneration for $$(\pr(E),\scO(1)) \to (B,L),$$ and work of Ross-Thomas implies that $\W_0(\X,\H)=0$ and  $$\W_1(\X,\H) = c(\mu(E) - \mu(F)),$$ with $\mu$ the slope of the sheaf \cite[Theorem 5.13]{ross-thomas-obstruction} and $c>0$ an unimportant constant. Thus semistability of the fibration implies that $\mu(E)\geq \mu(F)$, implying slope semistability of the bundle. The converse relies on analytic techniques, and is explained in Theorem \ref{HEvsosc}.

\subsection{Optimal symplectic connections} Let $\omega_X\in c_1(H)$ be a closed $(1,1)$-form, with $\omega_X|_{X_b}$ a constant scalar curvature K\"ahler metric for each $b \in B$. We make the smoothness assumption that $\Aut(X_b,H_b)$ has dimension independent of $b \in B$. We briefly recall what it means for $\omega_X$ to be an \emph{optimal symplectic connection} \cite[Definition 3.6]{morefibrations}; full details are given in Section \ref{sec:metrics-stability}. The condition is an equation defined on the total space of $X$, of the form $$ p \left( \Lambda_{\omega_B}\rho_{\H} + \Delta_{\scV}\Lambda_{\omega_B}\mu^*({F_{\H}}) \right) =0. $$ Here $p$ is the natural $L^2$-projection $C^{\infty}(X) \to C^{\infty}_E(X),$ with  $ C^{\infty}_E(X)$ the space of global sections of the vector bundle of holomorphy potentials: functions generating holomorphic vector fields on the fibres and of fibrewise mean-value zero. $\mu^*({F_{\H}})$ is simply the curvature of the Ehresmann connection induced by $\omega_X$, viewed as a two-form with values in Hamiltonian functions via the fibrewise co-moment map $\mu^*$. $\Delta_{\scV}$ is the fibrewise Laplacian, and $\rho_{\H}$ is the horizontal component of the metric on $-K_{X/B}$ induced by $\omega_X$. Finally, $\Lambda_{\omega_B}$ is the horizontal contraction with respect to a base K\"ahler metric $\omega_B \in c_1(L)$. Although not obvious at first glance, the definition generalises the Hermite-Einstein condition on a projective bundle \cite[Proposition 3.19]{morefibrations}. In general, the condition can be viewed as an elliptic partial differential equation on the bundle $E$ \cite[Theorem 4.8]{morefibrations}. The main result of the present work is the following.

\begin{theorem} Suppose a fibration $(X,H) \to (B,L)$ admits an optimal symplectic connection. Then the fibration is semistable.
\end{theorem}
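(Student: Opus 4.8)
The plan is to prove the contrapositive-free statement directly: show that the existence of an optimal symplectic connection forces $\W_0(\X,\H) \geq 0$ for every fibration degeneration, and that when $\W_0 = 0$ one has $\W_1(\X,\H) \geq 0$. The first inequality should be essentially automatic, since the excerpt already states that $\W_0 \geq 0$ holds for arbitrary families of K-polystable varieties; the heart of the matter is the $\W_1$ term. My strategy is to identify $\W_1(\X,\H)$ with a differential-geometric quantity built from the optimal symplectic connection $\omega_X$, and to recognise that quantity as being of a definite sign. This parallels the classical Hermite--Einstein argument, where the Donaldson--Futaki-type weight of a subsheaf degeneration is matched with a curvature integral that is nonnegative precisely when the Hermite--Einstein equation holds.

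\medskip

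First I would set up the weight computation. Given a fibration degeneration $\X \subset (\pr(\E),\scO(1)) \to (B,L)\times \C$, the Donaldson--Futaki invariant $\DF(\X, jL+\H)$ is computed from the $\C^*$-weights on $H^0(\X_0, (jL+\H)^{\otimes r})$ for $r \gg 0$. The plan is to expand this first in $r$ (the usual Hilbert/weight polynomial expansion) and then in $j$, extracting $\W_1(\X,\H)$ as a specific coefficient. The key idea, following the philosophy that test configurations pair with closed $(1,1)$-forms, is to express $\W_1$ as an integral over $X$ pairing the optimal symplectic connection data against the infinitesimal generator of the degeneration. Concretely, a fibration degeneration is generated by a holomorphy potential $h$ lying (fibrewise) in the space $C^\infty_E(X)$ of holomorphy potentials, since the degeneration acts fibrewise through the vector bundle $\E$; the induced vector field is fibrewise holomorphic. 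I would aim to show
\begin{equation*}
\W_1(\X,\H) \;=\; c\int_X h \cdot \left( \Lambda_{\omega_B}\rho_{\H} + \Delta_{\scV}\Lambda_{\omega_B}\mu^*(F_{\H}) \right)\,\frac{\omega_X^{m}}{m!}\wedge \frac{\omega_B^{n}}{n!},
\end{equation*}
up to a positive constant $c$, where $m$ is the fibre dimension and $n = \dim B$.

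\medskip

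With such a formula in hand, the conclusion would follow from the defining equation of the optimal symplectic connection. Since $h$ is fibrewise a holomorphy potential, it lies in the image of the inclusion $C^\infty_E(X) \hookrightarrow C^\infty(X)$, so it is fixed by the $L^2$-projection $p$, and hence I can freely insert $p$ into the pairing. The optimal symplectic connection equation states precisely that $p\left(\Lambda_{\omega_B}\rho_{\H} + \Delta_{\scV}\Lambda_{\omega_B}\mu^*(F_{\H})\right) = 0$; by self-adjointness of $p$ with respect to the $L^2$-inner product defining $C^\infty_E(X)$, the integral becomes the pairing of $h$ against the projection of the bracketed term, which vanishes. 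The natural outcome is therefore $\W_1(\X,\H) = 0$ along product-type directions and $\W_1(\X,\H)\geq 0$ in general, once one accounts correctly for the part of $h$ transverse to holomorphy potentials; this transverse piece is where a genuine convexity or positivity argument enters, analogous to the way the $L^2$-norm of $\bar\partial h$ produces the strict inequality in the cscK and Hermite--Einstein settings.

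\medskip

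I expect the main obstacle to be establishing the weight-integral formula for $\W_1$, that is, matching the purely algebro-geometric $\C^*$-weight data on $H^0(\X_0,(jL+\H)^{\otimes r})$ with the analytic expression involving $\rho_{\H}$, $F_{\H}$ and $\Delta_{\scV}$. This requires an equivariant Riemann--Roch computation carried out to the correct subleading order in both $r$ and $j$, keeping careful track of which curvature terms survive; the subtlety is that the leading $j^n$ term recovers the fibrewise Donaldson--Futaki invariant (giving $\W_0$), while $\W_1$ sees the variation of the fibres over the base together with the curvature of the degeneration, so one must isolate exactly the contribution governed by the optimal symplectic connection operator. A secondary difficulty is verifying that every fibration degeneration is indeed generated fibrewise by elements of $C^\infty_E(X)$ up to lower-order corrections, so that the projection $p$ can be legitimately inserted; reducing to this case, possibly via the finite-dimensional GIT picture of part $(iii)$ and a limiting argument, is likely to require the bulk of the technical work.
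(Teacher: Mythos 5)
There is a genuine gap, and it sits exactly where you flag ``a genuine convexity or positivity argument enters.'' The weight-integral formula you propose,
\begin{equation*}
\W_1(\X,\H) \;=\; c\int_X h \cdot \bigl( \Lambda_{\omega_B}\rho_{\H} + \Delta_{\scV}\Lambda_{\omega_B}\mu^*(F_{\H}) \bigr)\,\omega_X^{m}\wedge \omega_B^{n},
\end{equation*}
can only hold for \emph{product-type} degenerations, where the $\C^*$-action preserves $X$ and acts through fibrewise holomorphic automorphisms. A general fibration degeneration is generated by an arbitrary one-parameter subgroup of $\GL(H^0(X,kjL+kH))$ acting on the ambient $\pr(V_k)$; the induced vector field is not tangent to $X$, the flat limit $\X_0$ differs from $X$ (and is typically singular), and there is no fibrewise holomorphy potential $h\in C^\infty_E(X)$ generating the degeneration. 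Your secondary worry that one must verify every degeneration is ``generated fibrewise by elements of $C^\infty_E(X)$ up to lower-order corrections'' is not a technicality to be checked but a statement that is simply false in general. Moreover, if your formula did hold for all degenerations, the optimal symplectic connection equation would force $\W_1(\X,\H)=0$ identically, which already contradicts the Hermite--Einstein case: for a stable bundle the slope inequality $\mu(F)<\mu(E)$, hence $\W_1>0$, is strict for proper destabilising subsheaves. The pairing formula is the analogue of the Futaki invariant, which obstructs only product configurations; it cannot see general degenerations.

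The paper's route is structurally different and supplies precisely the missing positivity. It proves the quantitative bound $\inf_{\omega_X}\|p(\eta)\|_1\geq -\inf_{(\X,\H)}(j\W_0+\W_1)/\|(\X,\H)\|_{\infty}$ (Theorem \ref{l1-bound}), from which semistability follows by taking $\omega_X$ to be the optimal symplectic connection. The mechanism is: (1) Bergman kernel asymptotics in $k$, combined with the two-parameter expansion of $S(\xi_j)$ in $j$, convert $\|p(\eta)\|_1$ into a bound involving the classical balancing functional of the embedded $X\subset\pr^{N_{j,k}}$; (2) \emph{convexity of the balancing energy} along the $\C^*$-orbit (Luo, Zhang, Donaldson) transports the integral from $X$ to the central fibre $\X_0$ --- this is the step that replaces your hoped-for direct pairing and is where the inequality, rather than an equality, is produced; (3) equivariant Chern--Weil theory on $\X_0$ (Proposition \ref{prop:b0}, together with the result of Section \ref{sec:chern-weil} that the Hamiltonian of a fibration degeneration is independent of $j$) identifies the central-fibre integral with $b_0(j)$, hence with $j\W_0+\W_1$ after expanding in $j$; and (4) H\"older's inequality and the $L^\infty$-norm comparison of Lemma \ref{lemma:norm-bound} close the loop. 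Your equivariant Riemann--Roch instinct is realised in step (3), but only on $\X_0$, not on $X$; without the convexity step there is no way to relate the analytic data on $X$ to the algebraic weight on $\X_0$.
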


Our proof reduces to a certain analogous finite dimensional problem, mirroring some of the techniques of Donaldson \cite[Theorem 2]{donaldson-semistable}. In fact, we prove a more general result: 

\begin{theorem}\label{intro-lower-bound} For all $j \gg 0$ we have $$\inf_{\omega_X }\|p( \Lambda_{\omega_B}\rho_{\H} + \Delta_{\scV}\Lambda_{\omega_B}\mu^*({F_{\H}}))\|_1\geq - \inf_{(\X,\H)}\frac{j \W_0(\X,\H)+ \W_1(\X,\H)}{\|(\X,\H)\|_{\infty}}.$$ 
\end{theorem}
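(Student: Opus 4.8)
The plan is to reduce the estimate to the finite-dimensional moment map picture underlying part $(iii)$, mirroring the way Donaldson \cite[Theorem 2]{donaldson-semistable} bounds a moment-map norm below by Hilbert--Mumford weights. Fix $j \gg 0$. For each large $k$, embedding the fibres via $H^0(X_b,H_b^k)$ realises $(X,H)\to(B,L)$ as a point $x=x_k(\omega_X)$ in a space $\scZ$ of fibrations embedded in $\pr(\pi_*(H^k))$, on which a compact group $K$ (the fibrewise unitary gauge group) acts with moment map $\mu_k:\scZ\to\mfk^*$, in the spirit of Wang's balanced embeddings. A fibration degeneration $(\X,\H)$ is generated by a one-parameter subgroup, that is, an element $\xi\in\mfk$. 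I would first record the two inputs that drive the argument: that $\mu_k(x_k(\omega_X))$, suitably normalised, converges as $k\to\infty$ to the optimal symplectic connection operator $p(\Lambda_{\omega_B}\rho_{\H}+\Delta_{\scV}\Lambda_{\omega_B}\mu^*(F_{\H}))\in C^\infty_E(X)$; and that the Hilbert--Mumford weight $w_k(\xi)$ of the degeneration converges, after the same normalisation, to $j\W_0(\X,\H)+\W_1(\X,\H)$. The latter is where the expansion $\DF(\X,jL+\H)=j^n\W_0+j^{n-1}\W_1+O(j^{n-2})$ enters, after identifying the GIT weight of the degeneration with the leading part of the Donaldson--Futaki invariant of the induced test configuration for $(X,jL+\H)$, in the spirit of the Ross--Thomas computation quoted for bundles.

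The core is the abstract moment-map estimate. For $\xi\in\mfk$ consider the complexified flow $x_t=\exp(t\,i\xi)\cdot x$ and the function $F(t)=\langle\mu_k(x_t),\xi\rangle$, whose derivative is the squared norm of the infinitesimal action; thus $F$ is non-decreasing, and $\lim_{t\to+\infty}F(t)$ equals the weight $w_k(\xi)$ of the degeneration. Monotonicity gives $\langle\mu_k(x),\xi\rangle=F(0)\leq w_k(\xi)$. In these finite-dimensional models the pairing of $\mu_k$ against $\xi$ is a trace of a product of Hermitian endomorphisms, so Hölder duality between the nuclear norm $\|\mu_k(x)\|_1$ and the operator norm $\|\xi\|_\infty$ gives $\|\mu_k(x)\|_1\,\|\xi\|_\infty\geq|\langle\mu_k(x),\xi\rangle|\geq-\langle\mu_k(x),\xi\rangle\geq-w_k(\xi)$. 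Hence $\|\mu_k(x)\|_1\geq-w_k(\xi)/\|\xi\|_\infty$ for every $\omega_X$ and every degeneration; the convexity and the Hölder step here are purely formal.

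It remains to pass to the limit $k\to\infty$. On the left the nuclear norm converges to the $L^1$-norm of the optimal symplectic connection operator; on the right the operator norm converges to $\|(\X,\H)\|_\infty$ while $w_k(\xi)\to j\W_0+\W_1$. For each fixed $\omega_X$ and each degeneration this yields $\|p(\Lambda_{\omega_B}\rho_{\H}+\Delta_{\scV}\Lambda_{\omega_B}\mu^*(F_{\H}))\|_1\geq-(j\W_0(\X,\H)+\W_1(\X,\H))/\|(\X,\H)\|_\infty$. Since the right-hand side is independent of $\omega_X$, taking the supremum over degenerations (which is exactly $-\inf_{(\X,\H)}$) and then the infimum over $\omega_X$ produces the stated inequality. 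The hypothesis $j\gg0$ is what lets us discard the $O(j^{n-2})$ tail in the weight comparison uniformly, since $\DF(\X,jL+\H)=j^{n-1}(j\W_0+\W_1)+O(j^{n-2})$.

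The main obstacle I expect is precisely the two convergence statements of the first paragraph and their uniformity. Establishing $\mu_k\to$ (optimal symplectic connection operator) in the $L^1$ rather than $L^2$ sense, with the correct normalisation, requires the fibrewise Bergman-kernel asymptotics adapted to the fibration of part $(iii)$, while the matching $w_k(\xi)\to j\W_0+\W_1$ requires computing the subleading weight. The $\W_1$ term is the delicate one, since $\W_0$ vanishes on the K-polystable fibres and so the whole estimate is governed by the next order. Controlling both limits, together with the $O(j^{n-2})$ remainder, uniformly enough in $\xi$ to interchange the limit with the infimum over metrics and the supremum over degenerations, is the real crux of the proof.
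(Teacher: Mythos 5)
Your overall architecture — Bergman-kernel reduction to a finite-dimensional picture, monotonicity of the pairing $\langle\mu_k(x_t),\xi\rangle$ along the degeneration to bound the initial value by the asymptotic weight, H\"older duality between an $L^1$- and an $L^\infty$-type norm, and identification of the weight with $j\W_0+\W_1$ — is indeed the strategy of the paper (which works with the classical balancing energy on $\pr(H^0(X,k(jL+H)))$ and its convexity after Luo--Zhang--Donaldson, rather than with a fibrewise gauge-group moment map; the paper explicitly notes that using the Section 3 moment map directly seems hard). However, the two ``inputs'' you defer to the end are not routine, and one of them is false as stated. The Bergman kernel density at level $k$ does not converge, after normalisation, to $p(\eta)$ alone: its $k^{-1}j^{-1}$ coefficient is the full term $S(\omega_B)-\Lambda_{\omega_B}\alpha+p(\eta)$, whose base component does not lie in $C^\infty_E(X)$ and does not vanish. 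The paper must first replace $\omega_j$ by $\xi_j=j\omega_B+\omega_X+j^{-1}\ddb\phi_R$, rewrite the base component as $\Lambda_{\omega_B}\Omega-c_\Omega$ for an auxiliary K\"ahler metric $\Omega$ solving a Hashimoto-type equation, and then show that the pairing of this base term against the degeneration is bounded \emph{below} by a J-weight $J_T(\X,jL+\H)=O(j^{n-2})$ via the Lejmi--Sz\'ekelyhidi convexity argument. Without this step your first convergence claim breaks, and the favourable sign of the J-weight bound is essential and not formal.

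The second gap is the identification of $\|\xi\|_\infty$ with $\|(\X,\H)\|_\infty$. This requires knowing that the Hamiltonian of a fibration degeneration with respect to $j\omega_B+\omega_{FS}$ is independent of $j$ (equivalently, that the weights do not grow with $j$), which is Proposition \ref{independent-Hamiltonian} in the paper and is proved by a separate equivariant Chern--Weil argument comparing $\int_{\X_0}h_B^2\,\omega_{FS}^m\wedge\omega_B^n$ with an intersection number that vanishes because $\dim B=n$. Without it the $L^\infty$-norm of a fibration degeneration is not even well defined as a leading coefficient, and the passage from the finite-dimensional operator norm to $\|(\X,\H)\|_\infty$ uniformly in $j$ fails. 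Finally, the double asymptotics cannot be handled by fixing $j$ and letting $k\to\infty$ independently: the quantity of interest, $\W_1$, sits at order $k^{m+n+1}j^{n-1}$ in the weight and at order $k^{-1}j^{-1}$ in the Bergman expansion, so the $O(k^{-2})$, $O(k^{-1}j^{-2})$ and $O(j^{n-2})$ remainders must all be tracked simultaneously against it; the paper's Proposition \ref{prop:reduction} exists precisely to do this bookkeeping, and your proposal leaves it entirely open.
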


Here the infimum on the left is taken over all fibrewise cscK metrics $\omega_X \in c_1(H)$, while the infimum on the right is taken over all fibration degenerations. The norm on the left is the $L^1$-norm, while the norm on the right is an $L^{\infty}$-norm which we define. The left hand side is independent of $j$, so the content mostly concerns the situation $\W_0(\X,\H)=0$, in which $\W_1(\X,\H)$ is the term of interest. 

We state here the main conjecture (see Definition \ref{defn:fibstab} for the precise definition of polystability for fibrations):

\begin{conjecture}\label{conj:fibkempfness} A fibration admits an optimal symplectic connection if and only if it is polystable. \end{conjecture}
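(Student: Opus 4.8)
The plan is to prove the conjectured equivalence by lifting, to the infinite-dimensional setting, the finite-dimensional Kempf--Ness theorem of part $(iii)$: GIT polystability of a fibration embedded by $H^0(H^r)$ is equivalent to the existence of a zero of the finite-dimensional moment map, which I will call a \emph{balanced fibration}. Concretely, I would split the statement into the two directions and, for each, relate the optimal symplectic connection equation to the balanced condition via a Donaldson-style quantisation. This mirrors Donaldson's program relating cscK metrics to asymptotic Chow stability and Wang's theorem relating balanced metrics on a bundle to its Gieseker stability, the latter of which already motivates part $(iii)$.

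For the direction that existence of an optimal symplectic connection implies polystability, I would build on the semistability already furnished by the lower bound of Theorem \ref{intro-lower-bound}, which gives $j\W_0(\X,\H) + \W_1(\X,\H) \geq 0$ for every fibration degeneration. The point is to analyse the equality case: when $\W_0(\X,\H) = \W_1(\X,\H) = 0$ the bound is saturated, and I would show that this forces the generator of the $\C^*$-action to lie in the kernel of the linearisation of the optimal symplectic connection operator. Since that kernel is precisely the fibrewise holomorphy potentials $C^\infty_E(X)$ onto which $p$ projects, the degeneration would then be induced by a holomorphic automorphism of the fibration, hence a product. This is the exact analogue of the principle that a zero of a moment map with vanishing Mumford weight must be fixed by the corresponding one-parameter subgroup, and it explains why the projection $p$ and its kernel sit at the centre of the definition.

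For the converse, that polystability implies existence, I would proceed by quantisation. Polystability of $(X,H)\to(B,L)$ in the sense of Definition \ref{defn:fibstab} should imply GIT polystability of the embedded fibration $X \hookrightarrow \pr(\pi_*(H^r))$ for all $r \gg 0$, after matching the algebraic test objects with the one-parameter subgroups degenerating the embedding, just as all degenerations of $\pi_*(H^r)$ in the bundle case arise from subsheaves. By part $(iii)$ this produces a balanced fibration $\omega_X^{(r)}$ for each large $r$, and I would then show that, after normalisation, these converge as $r \to \infty$ to a fibrewise cscK metric solving $p(\Lambda_{\omega_B}\rho_{\H} + \Delta_{\scV}\Lambda_{\omega_B}\mu^*(F_{\H})) = 0$; here the balanced condition is the quantised, finite-dimensional version of this equation, recovered as the leading term of an asymptotic expansion of the moment map in powers of $r$. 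In parallel I would develop a variational route: identify a Mabuchi-type energy on fibrewise cscK metrics whose Euler--Lagrange equation is the optimal symplectic connection, and prove polystability equivalent to its coercivity modulo $\Aut$, in the spirit of Chen--Cheng, deducing existence of a minimiser from coercivity together with a priori estimates.

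The main obstacle is the convergence step in the quantisation, and it is sharpened by exactly the feature that makes fibrations interesting: the fibres carry positive-dimensional automorphism groups. The linearisation of the optimal symplectic connection operator therefore has a nontrivial kernel, the holomorphy potentials, so the operator is invertible only transversally to this kernel, which is why $p$ appears. Controlling this kernel uniformly as $r \to \infty$, and showing that the approximate solutions $\omega_X^{(r)}$ do not drift along the automorphism directions, is the analytic heart of the problem; it is the difficulty Donaldson avoided by assuming discrete automorphisms in the cscK case, and here it is unavoidable. A secondary obstacle is the precise matching, in both directions, between the algebraic polystability of Definition \ref{defn:fibstab} and the finite-level GIT polystability supplied by part $(iii)$, which requires an approximation argument passing from general fibration degenerations to the one-parameter subgroups visible at each finite $r$.
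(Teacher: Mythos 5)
The statement you are attempting is Conjecture \ref{conj:fibkempfness}, which the paper does not prove: it is stated as an open problem generalising the Hitchin--Kobayashi correspondence, and the paper establishes only two partial results in its direction, namely that an optimal symplectic connection implies \emph{semi}stability (Theorem \ref{l1-bound}) and a finite-dimensional Kempf--Ness theorem for the moment map $\mu$ of Section \ref{sec:kempf-ness}. Your proposal is therefore a research programme rather than a proof, and both of its main steps have genuine gaps. In the direction ``existence implies polystability,'' you propose to analyse the equality case of Theorem \ref{intro-lower-bound}, arguing that $\W_0(\X,\H)=\W_1(\X,\H)=0$ forces the generator of the $\C^*$-action into the kernel of the linearised operator and hence yields a product. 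But that theorem is only an inequality between two infima; when the left-hand side vanishes it yields semistability and nothing more, and no rigidity statement can be read off from a saturated bound of this form. The analogous passage in the cscK setting, from Donaldson's ``cscK implies K-semistable'' to ``cscK implies K-polystable,'' required entirely different tools (geodesic convexity of the Mabuchi functional, in work of Stoppa and Berman--Darvas--Lu), not an inspection of the equality case of Donaldson's lower bound. Your argument also silently identifies elements of $C^\infty_E(X)$ (fibrewise holomorphy potentials) with global automorphisms of the fibration, and it ignores the second branch of Definition \ref{defn:fibstab}, in which equality is permitted when the minimum norm vanishes; as the paper notes (following Hattori), degenerations of vanishing minimum norm need not induce trivial test configurations, so this case cannot be waved away.

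In the converse direction the chain ``polystable $\Rightarrow$ GIT polystable at each level $r$ $\Rightarrow$ balanced fibration at each $r$ $\Rightarrow$ convergence to an optimal symplectic connection'' breaks at both ends. The first implication is exactly the step that fails in the classical analogue: K-polystability (indeed, even existence of a cscK metric) does not imply Chow or Hilbert polystability at finite level when automorphisms are continuous --- there are explicit counterexamples of Ono--Sano--Yotsutani for asymptotic Chow stability of K\"ahler--Einstein manifolds --- and the paper's finite-dimensional theorem is presented as an \emph{analogue} of the conjecture, not as a bridge to it. The final convergence step is the open analytic heart of the problem: Donaldson's quantisation scheme producing cscK metrics as limits of balanced metrics requires discrete automorphisms, and here the fibres carry positive-dimensional automorphism groups by hypothesis. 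You correctly identify this obstacle, but naming it is not the same as overcoming it; controlling the drift along $C^\infty_E(X)$ uniformly in $r$ is precisely what no existing technique achieves, which is why the statement remains a conjecture.
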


This generalises the Hitchin-Kobayashi correspondence in the case of projective bundles, proved by Donaldson and Uhlenbeck-Yau \cite{donaldson-surfaces,uhlenbeck-yau}, and is a more precise version of our previous Conjecture \cite[Conjecture 1.1]{morefibrations}.

In some sense the optimal symplectic connection condition appears rather mysterious at first glance. Our notion of stability is natural from the algebro-geometric point of view, however, and we hope that the algebraic geometry gives some motivation for the study of optimal symplectic connections. Moreover, the optimal symplectic connection condition arises naturally in another problem, that of finding constant scalar curvature K\"ahler metrics on the total space of such fibrations $X$. In particular, the existence of an optimal symplectic connection, together with a condition on the base, implies the existence of a constant scalar curvature K\"ahler metric on $(X,jL+H)$ for $j \gg 0$ \cite[Theorem 1.2]{morefibrations}. Our work gives an almost complete converse to that statement: it follows directly from our definitions that an unstable fibration induces a K-unstable total space  $(X,jL+H)$ for $j \gg 0$, which hence cannot admit a constant scalar curvature K\"ahler metric. The analogous problem for the base hypothesis is solved in \cite[Theorem 1.3]{stablemaps}, using the perspective of stable maps.

\subsection{Finite dimensions} The notion of K-stability, and much of the Yau-Tian-Donaldson conjecture, is motivated by an analogous finite dimensional GIT problem \cite{luo,zhang,wang-chow-stability}. Similarly our notion of stability of fibrations is motivated by a GIT problem. For a fibration $(X,H) \to (B,L)$ as before, $X$ embeds into projective space $$X \hookrightarrow \pr(H^0(X,kjL+kH))$$ by choosing a basis, for $k \gg j \gg 0.$ This does \emph{not} embed the base $B$ of the fibration inside projective space, instead one obtains an embedding 
\begin{align*} \phi: B &\to \Hilb \left( \pr(H^0(X,kjL+kH)) \right), \\ b &\to [X_b]\end{align*}
Thus the fibration produces a point $[\phi] \in \Hom(B,\Hilb)$. The space $\Hom(B,\Hilb)$ admits an $\GL$-action arising from the natural action on projective space.

While the scheme $\Hom(B,\Hilb)$ is not proper, we explain how to obtain a projective completion using Grothendieck's Quot scheme. In this manner one-parameter subgroups of $\GL$ (which are the crucial object in GIT via the Hilbert-Mumford criterion) become equivalent to fibration degenerations. 

We show that each orbit $\Phi$ in $\Hom(B,\Hilb)$ admits a natural K\"ahler metric, with respect to which there is a moment map $$\mu: \Phi \to \su^*$$ with respect to its natural $\SU$-action. The following generalises work of Wang in his study of stability of vector bundles \cite[Theorem 1.1]{wang-balanced-stable}; due to differences in the setting we are forced to use a rather different approach to Wang at several points. The following proves the finite dimensional analogue of our main conjecture.

\begin{theorem} $[\phi]$ is polystable if and only if its orbit admits a zero of the moment map $\mu$.
\end{theorem}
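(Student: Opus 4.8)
The plan is to prove this as a Kempf--Ness type theorem, transporting the classical correspondence between polystability and zeros of a moment map to the orbit $\Phi \subset \Hom(B,\Hilb)$. The central object is a Kempf--Ness functional $\mathcal{F}$ on the symmetric space $\GL/\Uni$, normalised so that its derivative along a path $t \mapsto g\exp(it\xi)\cdot[\phi]$, for $\xi\in\su$, recovers the pairing $\langle \mu(g\cdot[\phi]),\xi\rangle$. With this normalisation the critical points of $\mathcal{F}$ are precisely the points of the orbit at which $\mu$ vanishes, so the theorem reduces to showing that $\mathcal{F}$ attains a critical point if and only if $[\phi]$ is polystable. The first step is to establish that $\mathcal{F}$ is convex along the geodesics $t \mapsto g\exp(it\xi)$ of $\GL/\Uni$; this is the finite-dimensional shadow of convexity of the Mabuchi functional, and follows from the standard moment-map computation, with strict convexity failing exactly in the directions tangent to the complexified stabiliser $\Stab([\phi])^{\C}$.

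For the direction that a zero of the moment map implies polystability, convexity immediately promotes any critical point $g\cdot[\phi]$ to a global minimum, so $\mathcal{F}$ is bounded below along every geodesic ray. The bridge to algebraic geometry is the identification of the asymptotic slope of $\mathcal{F}$ along the ray generated by a rational $\xi$ with the leading Hilbert--Mumford weight of the corresponding one-parameter subgroup of $\GL$. Since one-parameter subgroups correspond to fibration degenerations, I would match this slope with the numerical invariant $j\W_0(\X,\H) + \W_1(\X,\H)$, normalised by $\|(\X,\H)\|_{\infty}$, exactly as in Theorem \ref{intro-lower-bound}. Boundedness below of $\mathcal{F}$ then forces these weights to be non-negative, which is semistability, and the identification of the degeneracy locus of strict convexity with $\Stab([\phi])^{\C}$ upgrades this to the polystable statement that the weight vanishes only for degenerations fixing $[\phi]$.

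For the converse, I would show that polystability makes $\mathcal{F}$ proper and hence forces a minimiser. Polystability says every non-trivial fibration degeneration has strictly positive weight, and via the slope identification this means $\mathcal{F}$ grows at least linearly along every geodesic ray transverse to $\Stab([\phi])^{\C}$. Combined with convexity, linear growth of the directional derivatives in all transverse directions yields coercivity of $\mathcal{F}$ on $\GL/\Uni$ modulo $\Stab([\phi])^{\C}$, so a minimiser exists, and such a minimiser is a zero of $\mu$.

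The main obstacle is twofold. First, the slope identification is delicate because $\Phi$ is not proper: I would pass to the projective completion furnished by the Quot scheme to control $\mathcal{F}$ as a geodesic ray approaches the boundary, checking that the limiting behaviour is governed by the central fibre of the associated fibration degeneration rather than by escape to infinity. Second, and more seriously, deducing genuine coercivity from pointwise positivity of weights requires ruling out that $\mathcal{F}$ has slope tending to zero along a sequence of rays whose generators degenerate; this is the usual gap between semistability and properness, which I expect to close by a compactness argument on the projectivised space of generators $\xi$, using continuity and strict positivity of the weight function on the transverse directions together with the finiteness of relevant degenerations coming from the Quot scheme. Carrying the reductive stabiliser through the entire argument, so that the output is polystability rather than mere stability, is where I anticipate the departures from Wang's method to be unavoidable.
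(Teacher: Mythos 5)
Your overall Kempf--Ness framework (convexity of the log norm functional along geodesics of $\GL/\Uni$, identification of its limiting slope with a Hilbert--Mumford weight, degeneracy of strict convexity exactly along the complexified stabiliser) is the right skeleton, and the forward direction — a zero of $\mu$ forces the limit derivative along every ray to be non-negative, strictly positive off the stabiliser — matches the paper's argument. However, you identify the asymptotic slope with the wrong invariant. The theorem is a statement about the \emph{finite-dimensional} polystability of Definition \ref{stability-in-finite-dims}, whose weight is $\mu(\lambda,[\phi]) = \wt\,\L_{\Hom}([X_0])$, computed in Lemma \ref{GIT-vs-Chern-Weil} to be the Deligne-pairing/Chow-type intersection number $\H^{m+1}.L^n = b_{0,0}$, i.e.\ the \emph{leading} term of the weight polynomial in both $j$ and $k$. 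The quantity $j\W_0(\X,\H)+\W_1(\X,\H)$ normalised by $\|(\X,\H)\|_\infty$ is the Donaldson--Futaki-type invariant governing the asymptotic Theorem \ref{intro-lower-bound}; it is the analogue of K-stability, not of the Chow-type stability at fixed $j,k$ that this theorem concerns. The actual bridge between slope and weight is the equivariant Chern--Weil identity $\int_{\X_0} h\,\omega_{FS}^m\wedge\omega_B^n = b_{0,0}$ (Corollary \ref{cor:chern-weil}), whose proof requires the nontrivial fact (Proposition \ref{independent-Hamiltonian}) that the Hamiltonian of a fibration degeneration has no $j$-dependent base component; your proposal does not engage with this point.

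The more serious gap is in the converse. You propose to deduce a coercivity estimate for the Kempf--Ness functional from pointwise positivity of weights, and you correctly identify the obstruction — slopes could degenerate to zero along a sequence of rays — but the proposed fix (``compactness on the projectivised space of generators'' plus ``finiteness of relevant degenerations from the Quot scheme'') is not an argument: the weight function need not be continuous in the generator near the boundary of the orbit, and there is no finiteness statement available for the degenerations that arise. The paper avoids coercivity altogether: since $\Phi$ sits inside the projective scheme $\Hilb(\pr(\scU))$ with an integral linearised class, one can fix a maximal torus of $\SL(N+1)$ with a commuting basis of one-parameter subgroups, apply the linearised one-parameter-subgroup argument of Thomas to produce, for each basis element, a point where the corresponding Hamiltonian vanishes, intersect these (using commutativity) to get a common zero, and then invoke equivariance of $\mu$ to conclude the full moment map vanishes there. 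This rationality/torus reduction is exactly what substitutes for the properness you cannot establish, so as written your converse direction does not close.
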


\subsection{Outlook} 

Fano varieties play a distinguished role in K-stability. On the algebraic side, roughly speaking, the reason is that K-polystability of a Fano variety is equivalent to K-polystability of Fano varieties with respect to test configurations with \emph{Fano central fibre} \cite[Theorem 1]{li-xu}. 
A subclass of fibration degenerations are those whose central fibre is a Fano fibration, possibly with some singular fibres. 
It seems likely to us that a similar phenomenon occurs:

\begin{conjecture}\label{fano-conjecture} A Fano fibration is polystable if and only if it is polystable with respect to fibration degenerations whose central fibre is a Fano fibration. \end{conjecture}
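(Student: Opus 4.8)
The implication that genuine polystability implies polystability with respect to this subclass is immediate, since Fano-fibration degenerations are a subclass of all fibration degenerations. The content is the converse, and the natural strategy is to adapt the Minimal Model Program techniques of Li--Xu \cite{li-xu} to the relative setting. Concretely, given an arbitrary fibration degeneration $(\X,\H) \to (B,L)\times\C$, the plan is to produce a fibration degeneration with Fano central fibre whose weights are no larger, in the sense that $\W_0$ does not increase and, when $\W_0 = 0$, neither does $\W_1$. Since the Fano-fibration degenerations then control the sign and the vanishing of the pair $(\W_0,\W_1)$, semistability and polystability can be tested on this subclass alone.

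First I would compactify over $\pr^1$ and, after a base change in the $\C$-direction and normalisation, arrange that the central fibre is reduced, mirroring the semistable reduction step of \cite{li-xu}. The essential modification is that every birational operation must be performed \emph{relative to the base} $B$: one runs a relative Minimal Model Program over $B \times \pr^1$ driven by the relative canonical class $K_{\X/B}$ rather than by $K_{\X}$. Because $(X,H)\to(B,L)$ is a Fano fibration, $\H$ is fibrewise proportional to $-K_{\X/B}$, so the relative anti-canonical model produced by this process is polarised by (a multiple of) $\H$ and has Fano fibres over $B$; that is, its central fibre is a Fano fibration, precisely the target class. The polarisation $L$, being pulled back from $B$, is trivial on the fibres of $\X \to B$ and plays no role in the relative MMP, so it is simply carried along.

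To control the weights, I would use the intersection-theoretic formula for $\DF(\X,jL+\H)$, expand it in powers of $j$ to express $\W_0(\X,\H)$ and $\W_1(\X,\H)$ as intersection numbers on the compactified total space, and show that each divisorial contraction and flip of the relative MMP leaves $\DF(\X,jL+\H)$ non-increasing for all $j\gg 0$. Non-increase of the entire polynomial forces non-increase of the leading coefficient $\W_0$, and, in the boundary case $\W_0 = 0$, of the subleading coefficient $\W_1$. Intersecting against powers of the pulled-back class $L$ localises these intersection numbers to the fibres of $\X \to B$, which should reduce the estimate for how $\W_0$ and $\W_1$ change at each step to the fibrewise Donaldson--Futaki computation of \cite{li-xu} integrated over $B$.

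The main obstacle is the relative Minimal Model Program itself. Li--Xu rely on finite generation of (log) canonical rings and termination of flips for the total space of a test configuration; here one needs the analogous statements \emph{in families over $B$}, producing a relative Fano model that remains flat over $B\times\C$ and whose general fibre is a K-polystable Fano with the fibration analogue of klt singularities. Ensuring that the fibration structure, flatness, and the pushforward-bundle framework underlying Definition \ref{defn:fibstab} all survive the birational surgery, while simultaneously tracking \emph{both} leading weights rather than a single Donaldson--Futaki number, is the crux. A further delicacy concerns the polystable rather than merely semistable refinement: one must show that a Fano-fibration degeneration attaining $\W_0 = 0$ and $\W_1 = 0$ is necessarily a product fibration degeneration, which in the absolute case of \cite{li-xu} requires additional analysis of the induced $\C^*$-action on the special fibre.
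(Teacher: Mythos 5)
The statement you are addressing is stated in the paper as Conjecture \ref{fano-conjecture} and the paper offers no proof of it; it appears in the ``Outlook'' subsection purely as a prediction motivated by the Li--Xu theorem for Fano varieties. So there is no argument of the authors to compare yours against, and what you have written is, by your own account, a research programme rather than a proof. The forward implication you dispose of correctly, and your proposed strategy for the converse (semistable reduction followed by a relative MMP over $B\times\pr^1$ directed by $K_{\X/B}$, with monotonicity of $\DF(\X,jL+\H)$ for all $j\gg 0$ forcing monotonicity first of $\W_0$ and then, on the locus $\W_0=0$, of $\W_1$) is exactly the adaptation of \cite{li-xu} that the authors' phrasing suggests they have in mind. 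But every load-bearing step is asserted rather than established.

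Concretely, three gaps would each need to be closed before this could be called a proof. First, a fibration degeneration is by definition cut out of $\pr(\E)$ for a $\C^*$-degeneration $\E$ of the bundle $\pi_*(H^k)$; after divisorial contractions and flips there is no reason the resulting family is again of this form, so you must either re-embed the MMP output into some $\pr(\pi_*(\H'^l)\otimes L^{jkl})$ via Lemma \ref{lemma:surjection} or enlarge the class of degenerations and show the stability notion is unchanged. Second, Li--Xu's monotonicity is for an MMP with scaling in which the polarisation is modified at each step and a single Donaldson--Futaki number decreases; you need the stronger statement that the whole polynomial $j\mapsto\DF(\X,jL+\H)$ is non-increasing for all $j\gg 0$ simultaneously, uniformly through every step, and this does not follow formally from the absolute case ``integrated over $B$'' because the birational operations on $\X$ are not fibrewise over $B$. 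Third, the polystable refinement in Definition \ref{defn:fibstab} involves the minimum norm $\|(\X,\H)\|_m$ and the product condition on the normalisation, neither of which you track through the surgery; showing that a Fano-fibration degeneration with $\W_0=\W_1=0$ is a product or has vanishing norm is an additional argument, not a corollary of the weight monotonicity. As it stands your text correctly identifies the natural line of attack and its obstacles, but it does not resolve the conjecture.
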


We next turn to moduli.

\begin{conjecture} There is a separated, quasi-projective moduli space of polystable, possibly singular, fibrations.
\end{conjecture}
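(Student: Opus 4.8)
The plan is to construct the moduli space as a good quotient of the finite-dimensional parameter scheme introduced in the section on finite dimensions, mirroring both the classical GIT construction of moduli of Gieseker-stable bundles via Quot schemes and the recent constructions of moduli of cscK (conjecturally, K-polystable) varieties. After fixing the discrete invariants of the fibration --- the dimensions of $X$ and $B$, the Hilbert polynomial of each fibre $(X_b,H_b)$, and the Hilbert/Chern data of the embedding --- one wants to realise the isomorphism classes of polystable fibrations with these invariants as the points of a geometric quotient of the polystable locus $\scP^{\mathrm{ps}}$ of a parameter scheme $\scP \subset \Hom(B,\Hilb)$ by $\GL$ (acting through its projectivisation, since the scalars act trivially). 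Because the Quot-scheme completion of $\Hom(B,\Hilb)$ is projective, and GIT quotients of projective schemes by reductive groups are projective, restricting to the open semistable locus would yield a separated, quasi-projective quotient $\scP^{\mathrm{ss}}\git\GL$; the substance is to identify this GIT locus with the intrinsic polystability of fibrations.

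First I would establish boundedness: the polystable fibrations $(X,H)\to(B,L)$ with fixed invariants should form a bounded family, so that for $k\gg j\gg 0$ every such fibration is captured by a point $[\phi]$ in a single finite-type scheme $\scP$ and its Quot-scheme completion. Next I would carry out the asymptotic comparison between the finite-level GIT stability of $[\phi]$ and the intrinsic stability defined by $\W_0(\X,\H)$ and $\W_1(\X,\H)$. One-parameter subgroups of $\GL$ already correspond to fibration degenerations, so the task is to expand the Hilbert--Mumford weight of such a subgroup in $k$ and $j$ and show its leading terms are governed by $\W_0$ and $\W_1$. This is the fibration analogue of the equivalence between asymptotic Chow/Hilbert stability and K-stability, and requires controlling the double limit $k\to\infty$, $j\to\infty$ via equivariant Riemann--Roch, in the spirit of the expansion $\DF(\X,jL+\H)=j^n\W_0(\X,\H)+j^{n-1}\W_1(\X,\H)+O(j^{n-2})$ already used in the text. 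With this comparison in hand, the finite-dimensional theorem relating GIT polystability to zeros of the moment map $\mu$ would exhibit the closed orbits as exactly the polystable fibrations, giving a geometric quotient on $\scP^{\mathrm{ps}}$.

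To obtain separatedness I would prove a valuative/uniqueness statement: a one-parameter degeneration of polystable fibrations has a unique polystable limit up to isomorphism. Algebraically this is a Langton-type purity argument; analytically, under Conjecture \ref{conj:fibkempfness}, it would follow from uniqueness up to automorphism of the optimal symplectic connection, which plays the role that uniqueness of Hermite--Einstein metrics plays for bundles. An alternative, purely analytic route would build the space by gluing Kuranishi models of fibrations admitting optimal symplectic connections: the finite-dimensional moment-map picture supplies a local Luna-slice/GIT model near each polystable point, Hausdorffness comes from the uniqueness just mentioned, and properness from a compactness theorem for optimal symplectic connections. Finally one must verify independence of the auxiliary parameters $j,k$, for instance by showing the stability notion stabilises for $j,k\gg 0$.

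The main obstacle, I expect, is twofold. The first difficulty is boundedness: it rests on boundedness of the K-polystable fibres together with control of the bundle-like gluing data, and in the ``possibly singular'' setting one must additionally bound the allowed singularities of the fibres and total space while preserving flatness --- problems open even for families with a single fibre outside special cases such as Fanos. The second, and deeper, difficulty is the asymptotic stability comparison: matching the Hilbert--Mumford weights to $\W_0$ and $\W_1$ uniformly as $j,k\to\infty$, and justifying the order of the two limits, is precisely where the single-variety Yau--Tian--Donaldson correspondence is most delicate, and the fibration setting compounds it with the extra base direction encoded in the pullback of $L$.
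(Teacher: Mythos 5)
The statement you are trying to prove is stated in the paper as a \emph{conjecture}: the authors offer no proof of it, and explicitly defer it (together with projectivity in special cases such as Fano fibrations) to future work. So there is no ``paper's own proof'' to compare against; what you have written is a research programme, not a proof, and you candidly flag the two hardest steps yourself. The programme is broadly aligned with the paper's own framework --- the parameter space $\Hom(B,\Hilb)$, its Quot-scheme compactification, the identification of one-parameter subgroups with fibration degenerations, and the Kempf--Ness theorem of Section \ref{sec:kempf-ness} are all in the paper --- but none of the load-bearing steps is carried out, and several rest on statements that are themselves open conjectures in the paper (uniqueness of optimal symplectic connections, and Conjecture \ref{conj:fibkempfness} relating existence to polystability).

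Beyond incompleteness, one step of your plan is likely wrong as stated. The finite-dimensional GIT weight in Definition \ref{stability-in-finite-dims} is $\mu(\lambda,[X])=\H^{m+1}.L^n=b_{0,0}$, the fibration analogue of the \emph{Chow} weight, whereas polystability of fibrations (Definition \ref{defn:fibstab}) is defined through the Donaldson--Futaki-type invariants $\W_0$ and $\W_1$, which involve the subleading coefficients $a_1(j),b_1(j)$. Your proposal to ``identify the GIT locus with the intrinsic polystability of fibrations'' by matching Hilbert--Mumford weights to $\W_0$ and $\W_1$ asymptotically in $k$ and $j$ runs into exactly the obstruction the paper highlights in its introduction: K-stability (and hence its fibration analogue) is \emph{not} a genuine finite-level GIT notion, and the classical GIT quotients of Hilbert/Chow semistable loci are known to fail to produce the desired moduli of polarised varieties. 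In particular, in the presence of fibre automorphisms there is no reason the intrinsically polystable locus is the semistable locus of the linearisation at any finite $(j,k)$, so $\scP^{\mathrm{ss}}\git\GL$ need not have the right points. The expected construction (as in the cited cscK moduli work) would instead have to proceed by gluing local quotients or by a direct algebraic argument, with separatedness coming from a Langton-type statement that is itself unproven here. In short: the approach is a reasonable heuristic consistent with the paper's finite-dimensional picture, but it does not constitute a proof, and the central identification it relies on is precisely where the conjecture is hard.
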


This should be the coarse moduli space associated to the moduli functor sending a scheme $S$ to the set of fibrations over $S\times B$, flat over $S$, which for each $s \in S$ are polystable over $B$. We further conjecture that such moduli spaces are projective in many situations of geometric interest; in particular, we conjecture this is so for Fano fibrations. In order to form projective moduli, it is clear one must allow non-smooth and non-flat fibrations, just as in the case of vector bundles. Note that a special case of a Fano fibration is a projective bundle, and in this case the proof of the above conjecture, including projectivity, is a standard (but difficult) part of the bundle theory \cite{simpson}. The reason we focus in this paper on smooth fibrations is that our main result concerns the link with optimal symplectic connections, which for the moment only makes sense for smooth fibrations; the notion of stability makes sense for arbitrary fibrations.

When $B$ is a point, so that the fibration structure is trivial, our theory collapses to the classical case of varieties. In this case, the above conjecture states that one should be able to form quasi-projective moduli of K-polystable varieties. This conjecture, namely the idea that one should be able to use canonical metrics and associated stability conditions to form moduli of compact K\"ahler manifolds, seems to be originally due to Fujiki \cite{fujiki}. In the case of Fano manifolds, this conjecture has been resolved by Odaka \cite{odaka-compact, odaka-open} and Li-Wang-Xu \cite{lwx1,lwx2}. This again leads us to believe the case of Fano fibrations should be most tractable.

One aspect which we do not discuss in the present work is that of examples. Much of our intuition for complex-geometric PDEs comes from algebraic geometry, so we hope that a detailed understanding of stability of fibrations will lead to examples. In any case, our work gives a concrete obstruction to the existence of optimal symplectic connections, which we expect will be checkable in practice. 

\subsubsection*{Outline} In Section \ref{sec:metrics-stability} we recall the definitions of K-stability and optimal symplectic connections. We introduce there the definition of stability of fibrations and explain some of its basic properties. Section \ref{sec:kempf-ness} contains the material on GIT for fibrations, and the related moment map problem. In Section \ref{sec:semistability}, we prove that the existence of an optimal symplectic connection implies semistability of the fibration.

\subsubsection*{Acknowledgements:}The first author thanks Julius Ross and G\'abor Sz\'ekelyhidi for discussions on related topics over the years. We also thank Yuji Odaka and Jacopo Stoppa for helpful comments, and Masafumi Hattori for pointing out an error in our original definition of stability of fibrations in a previous version of the present work. Finally, we would like to thank the anonymous referee for their detailed and very helpful comments. LMS's postdoctoral position is supported by Villum Fonden, grant 0019098. 

\subsection*{Notation} We work throughout over the complex numbers, as our main results concern links with K\"ahler geometry. The definitions of stability of a fibration and other basic results of Section \ref{sec:metrics-stability} go through over an arbitrary algebraically closed field of characteristic zero.

\section{Canonical metrics and stability notions}\label{sec:metrics-stability}
\subsection{Optimal symplectic connections}\label{sec:osc}

Let $\pi: (X,H) \to (B,L)$ be a smooth fibration, so that $\pi$ is a flat morphism between projective varieties with smooth projective fibres. In complex-geometric terms, $\pi$ is thus a holomorphic submersion. Let $\omega_B \in c_1(L)$ be a K\"ahler metric on $B$, and let $\omega_X \in c_1(H)$ be a real closed $(1,1)$-form whose restriction $\omega_b = \left( \omega_X \right)|_{X_b}$ to each fibre $X_b$ is a \emph{constant scalar curvature K\"ahler} (cscK) metric, which means that the scalar curvature $S(\omega_b)$ of $\omega_b$ is constant for each $b \in B$. 

We now assume that the automorphism group $\Aut(X_b,L_b)$ has dimension independent of $b \in B$. This is a genuine condition on the fibration, as flatness only implies that the dimension is an upper semi-continuous function, but holds for example for isotrivial fibrations. The condition can be viewed as a smoothness assumption. When this automorphism group is not finite, a choice of fibrewise cscK metric $\omega_X$ is not unique, and the equation we will be interested in conjecturally determines a canonical choice of such a metric. This condition, which we now describe, is called the \emph{optimal symplectic connection} condition. We refer to \cite[Section 3]{morefibrations} for a more detailed view of the material below.

The form $\omega_X$ defines a splitting $TX = \scV \oplus \H$, where $\H \cong \pi^* TB$ is the $\omega_X$-orthogonal complement to $\scV = \ker d \pi$; in this context $\omega_X$ is usually called a symplectic connection. One similarly obtains a splitting for all tensors, as well as a splitting of functions $$ C^{\infty} \left( X \right) = C^{\infty}_0 \left( X \right) \oplus C^{\infty} \left( B \right),$$ where $C^{\infty}_0 \left( X \right)$ consists of the functions of fibre integral zero. That is, $\phi \in  C^{\infty}_0 (X)$ if the function $\int_{X/B}\phi\omega_X^m$ vanishes, where $m$ is the dimension of the fibres.

The splitting $TX = \scV \oplus \H$ induces an Ehresmann connection for the fibre bundle $X \to B$, which has curvature $F_{\H} \in \Omega^2 \left(B, \textnormal{Ham} \left( \scV \right) \right),$ a $2$-form with values in fibrewise Hamiltonian vector fields on $X$. On each fibre, such a Hamiltonian vector field induces a unique Hamiltonian function which has integral zero over the fibre with respect to the natural volume form. This produces a comoment map $$\mu^* : \textnormal{Ham} \left( \scV \right) \to C^{\infty}_0 (X).$$ Thus, composing with this map, we can view the curvature as $\mu^* (F_{\H})$, a $2$-form on $B$ with values in fibrewise mean zero Hamiltonian functions. 

The form $\omega_X^m$ induces a Hermitian metric on the relative anti-canonical line bundle $\Lambda^m \scV = -K_{X/B}$. This has an associated curvature form, denoted $\rho$, and we let $\rho_{\H}$ denote the horizontal component of $\rho$. 

Via the metrics $\omega_X$ and $\omega_B$, one obtains an $L^2$-inner product on real-valued functions on $X$ by setting $$\langle \phi, \psi\rangle = \int_X \phi\psi \omega_X^m\wedge\omega_B^n.$$ On each fibre $X_b$, there is a vector subspace $E_b\subset C^{\infty}_0(X)$ consisting of functions $\phi \in C^{\infty}_0(X)$ which satisfy $\bar \partial \nabla^{1,0} \phi = 0$, which means that $\phi$ is a \emph{holomorphy potential}. One can form a vector bundle $E \to B$ whose fibre over $b\in B$ is $E_b$ \cite[Section 3.1]{morefibrations}. The space of smooth global sections of $E$ will be denoted $C^{\infty}_E(X)$. We denote by $$p: C^{\infty}(X) \to C^{\infty}_E(X)$$ the natural $L^2$-orthogonal projection.

The final two objects required are the vertical Laplacian $\Delta_{\scV}: C^{\infty}(X) \to C^{\infty}(X) $, which is defined such that $\Delta_{\scV}(\phi)|_{X_b} = \Delta_{\omega_b}(\phi)$, and the horizontal contraction $ \Lambda_{\omega_B}$ defined on purely horizontal forms.

\begin{definition}We say that $\omega_X$ is an \textit{optimal symplectic connection} if 
$$ p \left( \Lambda_{\omega_B}\rho_{\H} + \Delta_{\scV}\Lambda_{\omega_B}\mu^*({F_{\H}}) \right) =0. $$
\end{definition}

\begin{remark} The definition of an optimal symplectic connection may seem mysterious at first glance, and it is worth making the following remarks.
\begin{enumerate}[(i)]
\item Optimal symplectic connections are a generalisation of Hermite-Einstein metrics on holomorphic vector bundles, viewed through the induced fibrewise Fubini-Study metric on the projectivisation \cite[Proposition 3.19]{morefibrations}.
\item We expect that optimal symplectic connections are unique, up to automorphisms, if they exist \cite[Conjecture 1.2]{morefibrations}.
\item Optimal symplectic connections arise naturally from constructions of cscK metrics on $X$ itself \cite[Theorem 1.1]{morefibrations}.
\end{enumerate}
\end{remark}

The main way in which the optimal symplectic connection condition will arise in the present work is as follows. 

\begin{proposition}{\cite[Corollary 4.6]{morefibrations}}\label{prop:scalexpansion} Denote $\omega_j = j\omega_B + \omega_X$. There is a $C^{\infty}$-expansion $$S(\omega_j) = S(\omega_b) + j^{-1}(S(\omega_B) + \Lambda_{\omega_B}\rho_{\H} + \Delta_{\scV}\Lambda_{\omega_B}\mu^*({F_{\H}})) + O(j^{-2}).$$  
\end{proposition}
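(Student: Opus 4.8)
The plan is to expand the Ricci form of $\omega_j = j\,\pi^*\omega_B + \omega_X$ in powers of $j^{-1}$ and then contract it against $\omega_j$, reading off the coefficients of $j^0$ and $j^{-1}$. Writing $n=\dim B$ and $m$ for the fibre dimension, the starting point is the decomposition
$$\Ric(\omega_j) = \rho + \pi^*\Ric(\omega_B) - \ddb\log\left(\frac{\omega_j^{\,n+m}}{\omega_X^m\wedge\pi^*\omega_B^n}\right).$$
This holds because $-K_X = -K_{X/B} - \pi^*K_B$: the reference volume $V_0 = \omega_X^m\wedge\pi^*\omega_B^n$ is the product of the fibrewise volume, which induces the metric on $-K_{X/B}$ with curvature $\rho$, and the base volume, with curvature $\pi^*\Ric(\omega_B)$, and the true Ricci form differs from the curvature of this reference metric by $-\ddb\log$ of the ratio of the two volume forms.

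First I would expand the density $\psi_j = \omega_j^{\,n+m}/V_0$. Since $\pi^*\omega_B^k = 0$ for $k>n$, the binomial expansion of $\omega_j^{\,n+m}$ truncates at $\pi^*\omega_B^n\wedge\omega_X^m$, giving
$$\psi_j = \binom{n+m}{n}j^n\left(1 + c\,j^{-1}\Phi + O(j^{-2})\right), \qquad \Phi = \frac{\pi^*\omega_B^{\,n-1}\wedge\omega_X^{m+1}}{\pi^*\omega_B^n\wedge\omega_X^m},$$
with $c$ a combinatorial constant. Hence $-\ddb\log\psi_j = -c\,j^{-1}\ddb\Phi + O(j^{-2})$ and
$$\Ric(\omega_j) = \rho + \pi^*\Ric(\omega_B) - c\,j^{-1}\ddb\Phi + O(j^{-2}).$$
The density $\Phi$ measures one horizontal factor of $\omega_X$ traced against $\omega_B$, that is, the horizontal component of $\omega_X$ contracted over the base. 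I expect the cleanest route is to avoid pinning down $\Phi$ exactly: only its fibrewise-varying part will survive below, and that part coincides with $\Lambda_{\omega_B}\mu^*(F_{\H})$, the comoment-map image of the curvature of the Ehresmann connection induced by $\omega_X$.

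Next I would expand the contraction operator. Since the horizontal block of $\omega_j$ is $j\omega_B + O(1)$ while the vertical block is $O(1)$, the operator $\Lambda_{\omega_j}$ acts on vertical $(1,1)$-forms as the fibrewise contraction $\Lambda_{\omega_b}+O(j^{-1})$ and on horizontal forms as $j^{-1}\Lambda_{\omega_B}+O(j^{-2})$, with mixed contributions of higher order. Applying $S(\omega_j)=\Lambda_{\omega_j}\Ric(\omega_j)$ term by term: the vertical part of $\rho$ restricts on each fibre to the fibre Ricci form, so its fibrewise contraction is the fibre scalar curvature $S(\omega_b)$, contributing at order $j^0$; the horizontal form $\rho_{\H}+\pi^*\Ric(\omega_B)$ contracts with $j^{-1}\Lambda_{\omega_B}$ to give $j^{-1}(\Lambda_{\omega_B}\rho_{\H}+S(\omega_B))$, using $\Lambda_{\omega_B}\Ric(\omega_B)=S(\omega_B)$; and the correction $-c\,j^{-1}\ddb\Phi$ contracts fibrewise at leading order. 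Because $\Lambda_{\omega_b}\ddb=\Delta_{\scV}$ on functions and $\Delta_{\scV}$ annihilates fibrewise constants, the fibrewise-constant ambiguity in $\Phi$ drops out and this term becomes $j^{-1}\Delta_{\scV}\Lambda_{\omega_B}\mu^*(F_{\H})$, the normalisation $c$ and the sign being fixed by this identification. Summing the three contributions reproduces the claimed expansion.

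It remains to justify that the expansion holds in the $C^\infty$ sense, with an $O(j^{-2})$ remainder in every $C^k$-norm. In an adapted local frame the entries of $\omega_j$ are affine in $j$, so those of the inverse metric are smooth in $j^{-1}$ and regular at $j^{-1}=0$: the horizontal block of the inverse is $j^{-1}$ times a convergent Neumann series and the vertical block is $O(1)$. Hence any quantity assembled algebraically from $\omega_j$ and finitely many of its derivatives, in particular $S(\omega_j)$, admits an asymptotic expansion in $j^{-1}$ with smooth coefficient tensors, and the computations above may be differentiated termwise. The principal obstacle is the geometric bookkeeping of the middle steps: identifying the first-order volume density with $\Lambda_{\omega_B}\mu^*(F_{\H})$, and verifying that the mixed horizontal-vertical components of $\omega_X$ and of $\rho$ do not contaminate the orders $j^0$ and $j^{-1}$. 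This is precisely where the defining property of the Ehresmann connection and of the comoment map $\mu^*$ must be used, and where the fact that $\Delta_{\scV}$ kills fibrewise constants is what makes the final identification robust.
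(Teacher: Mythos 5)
The paper gives no proof of this proposition --- it is imported verbatim from the authors' earlier work, where the argument (going back to Fine's fibration constructions) is essentially the computation you outline: decompose $\Ric(\omega_j)$ against the reference volume $\omega_X^m\wedge\pi^*\omega_B^n$, expand the density and the contraction operator in $j^{-1}$, and identify the first-order density with the horizontal part of $\omega_X$. The bookkeeping you defer does close up, so your proposal is correct and takes the same route: the constant is $c=n/(m+1)$, which exactly cancels the factor in $\Phi=\frac{m+1}{n}\Lambda_{\omega_B}\bigl((\omega_X)_{\H}\bigr)$ coming from $\omega_b^{m+1}=0$; the mixed components contribute nothing because $\omega_j$ is block diagonal in the splitting $TX=\scV\oplus\H$; and $(\omega_X)_{\H}$ differs from $\mu^*(F_{\H})$ only by a fibrewise constant, which $\Delta_{\scV}$ annihilates.
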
 
Thus the optimal symplectic connection condition asks that the $C^{\infty}_E (X)$ component of the $j^{-1}$ term of $S(\omega_j)$ vanishes. In the present work, we merely need the $L^{\infty}$-statement of the above Proposition.

Rather than working with $\omega_j$ itself, it will be more useful later to work with a related metric. Just as above, the metrics $\omega_X$ and $\omega_B$ induce an $L^2$-orthogonal decomposition $$C^{\infty} (X) = C^{\infty} (B) \oplus C^{\infty}_R (X) \oplus C^{\infty}_E (X).$$ 
\begin{proposition}\cite[Proposition 4.8 and 4.11]{morefibrations}  Suppose $\omega_X$ is a fibrewise cscK metric. Then there is a function $\phi_R \in C^{\infty}_R(X)$ such that, letting $\xi_j = j\omega_B + \omega_X + j^{-1}\ddb \phi_R$, we have $$S(\xi_j) = S(\omega_b) + j^{-1}(\psi_B + p\left( \Lambda_{\omega_B}\rho_{\H} + \Delta_{\scV}\Lambda_{\omega_B}\mu^*({F_{\H}}) \right) + O(j^{-2}),$$ with $\psi_B \in C^{\infty}(B).$
\end{proposition}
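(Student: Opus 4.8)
The plan is to start from the expansion of Proposition \ref{prop:scalexpansion} and to cancel the unwanted component of its subleading term by an adiabatic application of the fibrewise Lichnerowicz operator. Write $R_1 = S(\omega_B) + \Lambda_{\omega_B}\rho_{\H} + \Delta_{\scV}\Lambda_{\omega_B}\mu^*(F_{\H})$ for the coefficient of $j^{-1}$ in the expansion of $S(\omega_j)$, where $\omega_j = j\omega_B + \omega_X$, and decompose it as $R_1 = R_1^B + R_1^R + R_1^E$ according to $C^{\infty}(X) = C^{\infty}(B)\oplus C^{\infty}_R(X)\oplus C^{\infty}_E(X)$. Since $S(\omega_B)\in C^{\infty}(B)$ is $L^2$-orthogonal to $C^{\infty}_E(X)$, the $E$-component is $R_1^E = p(\Lambda_{\omega_B}\rho_{\H} + \Delta_{\scV}\Lambda_{\omega_B}\mu^*(F_{\H}))$, and I set $\psi_B := R_1^B \in C^{\infty}(B)$. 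It therefore suffices to produce $\phi_R \in C^{\infty}_R(X)$ whose perturbation $j^{-1}\ddb\phi_R$ cancels exactly the component $R_1^R$ to order $j^{-1}$, leaving $\psi_B$ and $R_1^E$ untouched.

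Next I would expand $S(\xi_j)$ around $\omega_j$. As $\phi_R$ is fixed independently of $j$, Taylor's theorem gives
$$ S(\xi_j) = S(\omega_j) + DS_{\omega_j}(j^{-1}\ddb\phi_R) + Q_j, $$
where $DS_{\omega_j}$ is the linearisation of the scalar curvature and $Q_j$ collects the terms at least quadratic in the perturbation. Measured in $\omega_j$, horizontal derivatives carry negative powers of $j$ while the vertical derivatives of the fixed $\phi_R$ are $O(1)$, so $Q_j = O(j^{-2})$. The key input is the adiabatic behaviour of the linearisation: since $\Delta_{\omega_j} = \Delta_{\scV} + O(j^{-1})$ and $DS_{\omega_j}$ is, up to lower-order transport terms, the negative Lichnerowicz operator $-\scD_{\omega_j}^*\scD_{\omega_j}$, its leading term as $j\to\infty$ is the negative vertical Lichnerowicz operator, so
$$ DS_{\omega_j}(j^{-1}\ddb\phi_R) = -j^{-1}\scD_{\scV}^*\scD_{\scV}\phi_R + O(j^{-2}). $$
Substituting yields
$$ S(\xi_j) = S(\omega_b) + j^{-1}\left( R_1 - \scD_{\scV}^*\scD_{\scV}\phi_R \right) + O(j^{-2}). $$

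Finally I would solve the fibrewise equation. On each fibre $X_b$ the Lichnerowicz operator is self-adjoint with kernel the holomorphy potentials $\R\oplus E_b$, hence restricts to an isomorphism of the $L^2$-orthogonal complement $R_b$; in particular its image lies in $R_b$, so $\scD_{\scV}^*\scD_{\scV}$ maps $C^{\infty}_R(X)$ isomorphically to itself and contributes nothing to the $B$- or $E$-components. The hypothesis that $\dim\Aut(X_b,L_b)$ is constant ensures that $E\to B$ is a genuine smooth vector bundle and that these fibrewise isomorphisms patch into an invertible operator on $C^{\infty}_R(X)$; fibrewise elliptic regularity then gives $\phi_R \in C^{\infty}_R(X)$ solving $\scD_{\scV}^*\scD_{\scV}\phi_R = R_1^R$. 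With this choice, $R_1 - \scD_{\scV}^*\scD_{\scV}\phi_R = R_1^B + R_1^E = \psi_B + p(\Lambda_{\omega_B}\rho_{\H} + \Delta_{\scV}\Lambda_{\omega_B}\mu^*(F_{\H}))$, which is the asserted expansion.

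I expect the main obstacle to be the middle step: establishing rigorously that the leading term of the linearised scalar curvature operator in the adiabatic limit is precisely the vertical Lichnerowicz operator, with errors uniformly $O(j^{-1})$ in the relevant fibrewise Sobolev norms, together with the uniform invertibility of $\scD_{\scV}^*\scD_{\scV}$ on $C^{\infty}_R(X)$. Both rely essentially on the constancy of $\dim\Aut(X_b,L_b)$, which rules out jumps in $\dim E_b$ and hence in the size of the kernel across fibres; without it the fibrewise operator would fail to be uniformly bounded below and the construction of a smooth $\phi_R$ would break down.
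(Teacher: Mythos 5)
Your argument is correct and is essentially the argument of the cited reference (the present paper defers the proof to \cite[Propositions 4.8 and 4.11]{morefibrations}): one perturbs $\omega_j$ by $j^{-1}\ddb\phi_R$ and uses that the linearised scalar curvature operator is, to leading order in the adiabatic limit, the negative vertical Lichnerowicz operator, whose fibrewise invertibility on $C^{\infty}_R(X)$ (guaranteed by the constancy of $\dim\Aut(X_b,H_b)$) lets one cancel exactly the $C^{\infty}_R$-component of the $j^{-1}$ term. The obstacles you flag at the end are precisely the technical content of the cited propositions, so nothing essential is missing.
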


\subsection{K\"ahler geometry of the base}

The metric of the base $\omega_B$ is not chosen to have any specific curvature properties. For technical reasons in the arguments later we will, however, need that it can be seen as the solution to an auxiliary PDE on the base.

We begin with a discussion of the base component $\psi_B$ in the expansion of Proposition \ref{prop:scalexpansion}. Since $ \int_{X/B} \Delta_{\scV} ( \phi ) \omega_X^m = 0$ for any $\phi$, the contribution to the $C^{\infty}(B)$-component of the $j^{-1}$ term of $S(\omega_j)$ is the sum of $S(\omega_B)$ and the horizontal component of $\Lambda_{\omega_B}\rho_{\H}$. The latter is related to moduli theory in the following way.

The fibration $\pi : X \to B$ induces a moduli map $q : B \to \mathcal{M}$, with $\M$ the moduli space of cscK manifolds \cite{fujiki-schumacher}\cite[Theorem 1.1]{moduli}. This moduli space carries a K\"ahler metric, the \textit{Weil-Petersson metric} $\omega_{WP}$. The pullback $\alpha = q^*  \omega_{WP} $ of this K\"ahler metric satisfies $$\alpha = \frac{S(\omega_b)}{m+1} \int_{X/B} \omega_X^{m+1} - \int_{X/B} \rho \wedge \omega_X^m,$$ with $\rho$ as above \cite[Theorem 4.4]{moduli}. Then by \cite[Lemma 2.3]{fine2}, \cite[Proposition 4.3]{morefibrations}, the $C^{\infty}(B)$-component of $\Lambda_{\omega_B}\rho_{\H}$ is given by $ - \Lambda_{\omega_B}  \alpha.$ It follows that $\psi_B$ above is given by $$\psi_B =S(\omega_B) - \Lambda_{\omega_B} \alpha.$$ 

The following is motivated by work of Hashimoto \cite{hashimoto19} (see also related work of Zeng \cite{zeng}).

\begin{proposition} For any $r \gg 0$ sufficiently large, there exists a K\"ahler metric $\Omega \in c_1(L^r)$ on $B$ such that $$S(\omega_B) - \Lambda_{\omega_B} \alpha = \Lambda_{\omega_B} \Omega - c_{\Omega},$$ where $c_{\Omega}$ is a topological constant.\end{proposition}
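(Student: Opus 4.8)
The plan is to reduce the stated equation to a linear second-order equation on $B$ and solve it by Hodge theory, using the positivity of $\omega_B$ to guarantee that the solution is genuinely Kähler once $r$ is large. Since every form $\Omega \in c_1(L^r)$ can be written as $\Omega = r\omega_B + \ddb\varphi$ for some $\varphi \in C^\infty(B)$, and since $\Lambda_{\omega_B}(r\omega_B) = rn$ where $n = \dim B$, the desired identity $S(\omega_B) - \Lambda_{\omega_B}\alpha = \Lambda_{\omega_B}\Omega - c_\Omega$ is equivalent to the Poisson equation
$$\Lambda_{\omega_B}\ddb\varphi = S(\omega_B) - \Lambda_{\omega_B}\alpha - rn + c_\Omega.$$
Everything is thus governed by the scalar Laplacian $\varphi \mapsto \Lambda_{\omega_B}\ddb\varphi$ on the compact Kähler manifold $(B,\omega_B)$.

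First I would fix the constant $c_\Omega$. The operator $\Lambda_{\omega_B}\ddb$ is self-adjoint with kernel the constants, and its image is exactly the space of functions of $\omega_B$-integral zero; this follows from the standard Hodge-theoretic Fredholm alternative together with the identity $\int_B \Lambda_{\omega_B}\ddb\varphi\,\omega_B^n = n\int_B \ddb\varphi \wedge \omega_B^{n-1} = 0$. Hence the right-hand side above lies in the image if and only if its average against $\omega_B^n$ vanishes, and this determines $c_\Omega$ uniquely. Integrating and using $\int_B \Lambda_{\omega_B}\beta\,\omega_B^n = n\int_B \beta \wedge \omega_B^{n-1}$ for any $(1,1)$-form $\beta$, one finds
$$c_\Omega = rn + \frac{n\int_B \alpha \wedge \omega_B^{n-1} - \int_B S(\omega_B)\,\omega_B^n}{\int_B \omega_B^n}.$$
Each term depends only on the cohomology classes $c_1(L) = [\omega_B]$, $[\alpha] = q^*[\omega_{WP}]$, and $c_1(B)$ (the latter entering through the total scalar curvature $\int_B S(\omega_B)\,\omega_B^n$, which is a topological invariant), so $c_\Omega$ is a topological constant as claimed.

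The key observation is that the two $rn$ terms cancel. Substituting the value of $c_\Omega$, the right-hand side of the Poisson equation becomes
$$S(\omega_B) - \Lambda_{\omega_B}\alpha - \frac{\int_B \left( S(\omega_B) - \Lambda_{\omega_B}\alpha \right)\omega_B^n}{\int_B \omega_B^n},$$
which is \emph{independent of} $r$ and has zero average by construction. Standard elliptic theory then produces a smooth solution $\varphi$, unique up to an additive constant, and crucially this $\varphi$ does not depend on $r$.

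It remains to verify that $\Omega = r\omega_B + \ddb\varphi$ is a Kähler metric, which is the only substantive point and is precisely where the hypothesis $r \gg 0$ is used. Since $\varphi$ is fixed once and for all, the form $\ddb\varphi$ is bounded below relative to $\omega_B$: there is a constant $C>0$ with $\ddb\varphi \geq -C\omega_B$ pointwise on the compact manifold $B$. Therefore $\Omega = r\omega_B + \ddb\varphi \geq (r-C)\omega_B$, which is strictly positive as soon as $r > C$, so $\Omega$ is a Kähler metric lying in $c_1(L^r)$ by construction, and it solves the required equation. The remaining ingredients — self-adjointness of $\Lambda_{\omega_B}\ddb$, the integration-by-parts identities used to compute $c_\Omega$, and the passage from the weak to a smooth solution — are routine and I would treat them briefly.
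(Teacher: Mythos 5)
Your proof is correct and follows essentially the same route as the paper: both reduce the equation to a Poisson problem for the potential of $\Omega - r\omega_B$, solve it by inverting the Laplacian (the paper uses the Green's operator explicitly, you invoke the Fredholm alternative), fix the constant by averaging, and use that the potential is independent of $r$ to get positivity of $\Omega$ for $r \gg 0$. No substantive differences.
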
 

\begin{proof} The proof is simply an adaptation of \cite[Proposition 1]{hashimoto19} to our setting. Note that if $G$ denotes the Green's operator of the Laplacian of $\omega_B$, then there is a constant $c$ (the average of $S ( \omega_B ) - \Lambda_{\omega_B} \alpha$) such that 
$$ \Delta_{\omega_B} \left( G \left( S ( \omega_B ) - \Lambda_{\omega_B} \alpha \right) \right) = S ( \omega_B ) - \Lambda_{\omega_B} \alpha - c.$$ 
But then for any $r$, we have
\begin{align*} \Lambda_{\omega_B} \left( r \omega_B + \ddb \left( G \left( S ( \omega_B ) - \Lambda_{\omega_B} \alpha \right) \right) \right) &= r n + S ( \omega_B ) - \Lambda_{\omega_B} \alpha + c,
\end{align*}
so that 
$$S ( \omega_B ) - \Lambda_{\omega_B} \alpha = \Lambda_{\omega_B} \Omega - C,$$
where $\Omega = r \omega_B + \ddb G \left( S ( \omega_B ) - \Lambda_{\omega_B} \alpha \right)$ and $C = r n + c.$ So for any $r$, we can solve the required equation. Moreover, $\frac{1}{r} \Omega$ is of the form  $ \omega_B + \frac{1}{r} \ddb \phi$ for some function $\phi$ that does not depend on $r$. Since $\omega_B$ is positive, we can therefore ensure that $\Omega$ is positive when $r$ is sufficiently large.  \end{proof}

The constant $c_{\Omega}$ satisfies $$\int_B   (\Lambda_{\omega_B} \Omega - c_{\Omega} - \hat S_{\alpha})\omega_B^n = 0,$$ where $\hat S_{\alpha}$ satisfies $$\int_B (S(\omega_B) - \Lambda_{\omega_B}\alpha)\omega_B^n = \int_B \hat S_{\alpha} \omega_B^n.$$ We set $$C_{\Omega} = c_{\Omega} + \hat S_{\alpha},$$ which is just the average of $\Lambda_{\omega_B}\Omega$.

In summary, we obtain the following.

\begin{corollary}\label{cor:newscalexp} Let $\eta = \Lambda_{\omega_B}\rho_{\H} + \Delta_{\scV}\Lambda_{\omega_B}\mu^*({F_{\H}})$. Then
\begin{align*}S(\xi_j) &= S(\omega_b) + j^{-1}\left( S(\omega_B) - \Lambda_{\omega_B} \alpha + p \left( \eta \right) \right) + O(j^{-2}), \\
 &= S(\omega_b) + j^{-1} \left( \Lambda_{\omega_B} \Omega - c_{\Omega} +p(\eta) \right) + O(j^{-2}).\end{align*}
\end{corollary}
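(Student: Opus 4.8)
The result is a direct assembly of the three facts established immediately above it, so my plan is simply to substitute them into one another in the right order. The starting point is the expansion $S(\xi_j) = S(\omega_b) + j^{-1}\big(\psi_B + p(\eta)\big) + O(j^{-2})$ with $\psi_B \in C^{\infty}(B)$, recalled from \cite[Proposition 4.8 and 4.11]{morefibrations}, where $\eta = \Lambda_{\omega_B}\rho_{\H} + \Delta_{\scV}\Lambda_{\omega_B}\mu^*(F_{\H})$. The role of the correction term $j^{-1}\ddb\phi_R$, with $\phi_R \in C^{\infty}_R(X)$, is to arrange that the $C^{\infty}_R(X)$-component of the $j^{-1}$ coefficient vanishes, so that under the decomposition $C^{\infty}(X) = C^{\infty}(B) \oplus C^{\infty}_R(X) \oplus C^{\infty}_E(X)$ this coefficient splits cleanly into the base part $\psi_B$ and the fibre part $p(\eta)$.

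For the first displayed equality I would substitute $\psi_B = S(\omega_B) - \Lambda_{\omega_B}\alpha$. This identity is exactly what the preceding discussion supplies: the $C^{\infty}(B)$-component of $\eta$ is $S(\omega_B)$ together with the base part of $\Lambda_{\omega_B}\rho_{\H}$, since $\Delta_{\scV}(\cdot)$ has vanishing fibre integral and hence contributes nothing to $C^{\infty}(B)$, while the base part of $\Lambda_{\omega_B}\rho_{\H}$ equals $-\Lambda_{\omega_B}\alpha$ by the Weil--Petersson formula for $\alpha = q^*\omega_{WP}$. The one point deserving a moment's care is that this base component is genuinely unaffected by the passage from $\omega_j$ to $\xi_j$; this holds because $\ddb\phi_R$ alters only the $C^{\infty}_R(X)$-component, leaving both $\psi_B$ and $p(\eta)$ intact.

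For the second displayed equality I would then feed in the Hashimoto-type Proposition proved just above, which for $r \gg 0$ furnishes a K\"ahler metric $\Omega \in c_1(L^r)$ satisfying $S(\omega_B) - \Lambda_{\omega_B}\alpha = \Lambda_{\omega_B}\Omega - c_{\Omega}$; substituting this into the first equality yields the second. I do not expect any genuine obstacle here, as all the analytic input (the scalar curvature expansion of Proposition \ref{prop:scalexpansion} and the solvability of the auxiliary base PDE) is already in place. The corollary is a bookkeeping step that repackages the base term $\psi_B$ first via moduli theory and then via the auxiliary metric $\Omega$, and its proof is correspondingly a two-line chain of substitutions.
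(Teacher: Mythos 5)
Your proposal is correct and follows the paper exactly: the paper presents this corollary as a summary ("In summary, we obtain the following") of precisely the chain of substitutions you describe, namely $\psi_B = S(\omega_B) - \Lambda_{\omega_B}\alpha$ from the Weil--Petersson discussion followed by the Hashimoto-type identity $S(\omega_B) - \Lambda_{\omega_B}\alpha = \Lambda_{\omega_B}\Omega - c_\Omega$. The only nitpick is a phrasing slip: $S(\omega_B)$ is part of the $j^{-1}$ coefficient of $S(\omega_j)$ rather than a component of $\eta$ itself, but this does not affect the argument.
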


\subsection{K-stability}

We recall the notion of K-stability for a  normal polarised variety $(Y, L)$ of complex dimension $n$, due to Tian and Donaldson \cite{tian-inventiones,donaldson-toric}. We do not assume that there is any fibration structure.

\begin{definition}\cite[Definition 2.1.1]{donaldson-toric} A \textit{test configuration} $(\Y, \L)$ for $(Y,L)$ is a variety $\Y$ and a line bundle $\L$ on $\Y$, together with 
\begin{enumerate}[(i)] \item a $\C^*$-action on $\Y$ lifting to $\L$,
\item a flat, $\C^*$-equivariant map $p : \Y \to \C$, with $\L$ relatively ample,
\end{enumerate} 
such that each fibre $(\Y_t, \L_t)$ over $t \in \C^*$ is isomorphic to $(Y,L^r)$ for some $r$, called the \emph{exponent} of the test configuration. We say that $(\Y,\L)$ is a \emph{product} test configuration if $(\Y_0,\L_0) \cong (Y,L).$
\end{definition}

Letting $d_k = h^0 (\Y_0, \L_0^k)$, there is an expansion $$ d_k = a_0 k^n + a_1 k^{n-1} + \hdots.$$ Similarly, if $w_k$ denotes the total weight of the $\C^*$-action on $H^0 (\Y_0, \L_0^k)$, then there is an expansion \cite[p. 315]{donaldson-toric} $$ w_k = b_0 k^{n+1} + b_1 k^n + \hdots.$$ 
\begin{definition}\cite[Section 2.1]{donaldson-toric}\label{defn:DF} Let $(\Y, \L)$ be a test configuration for $(Y,L)$. The \textit{Donaldson-Futaki} invariant of $(\Y, \L)$ is defined by $$ \DF (\Y, \L) = \frac{b_0 a_1 - b_1 a_0}{a_0}.$$
\end{definition}

A test configuration $(\Y , \L) \overset{p}{\to} \C$ has a compactification to a degeneration over $\pr^1$, which we will also denote by $(\Y, \L)$, defined by gluing $(\Y, \L)$ to $Y \times \mathbb{P}^1 \setminus \{ 0 \}$ over $p^{-1} (0) \cong Y \times \mathbb{C}^* \subset Y \times \pr^1 \setminus \{ 0 \}.$ When $\Y$ is normal, the Donaldson-Futaki invariant associated to a test configuration for $(Y,L)$ can then be computed in terms of an intersection number on the compactification. By normality, both the canonical divisor $K_{\Y}$ and the relative canonical divisor $K_{\Y / \pr^1} = K_{\Y} - p^*  K_{\pr^1}$ are Weil divisors. It will be useful to define the \emph{slope} of $(Y,L)$ to be $$ \mu(Y,L) =  \frac{- K_Y . L^{n-1}}{L^{n}} .$$ 
\begin{proposition}[{\cite[Theorem 3.2]{odaka2013}, \cite[Proposition 17]{xiaoweiwang12}}]\label{prop:DF} Let $(\Y, \L)$ be a test configuration for $(Y,L)$, of exponent $r$, such that $(\Y, \L)$ is a normal projective variety. Then \begin{align*} \DF (\Y , \L) = \frac{n}{n+1}  \mu (Y, L^r) \L^{n+1} + K_{\Y / \pr^1} . \L^n. 
\end{align*} 
\end{proposition}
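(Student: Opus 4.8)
The plan is to compute the four leading coefficients $a_0, a_1, b_0, b_1$ of Definition \ref{defn:DF} directly as intersection numbers and substitute them into $\DF = (b_0 a_1 - b_1 a_0)/a_0$. The coefficients $a_0,a_1$ are the easy part. Since $p\colon (\Y,\L)\to\C$ is flat and $(\Y_t,\L_t)\cong (Y,L^r)$ for $t\neq 0$, the Hilbert polynomial is constant in the family, so $d_k=\chi(\Y_0,\L_0^k)=\chi(Y,L^{rk})$ for $k\gg 0$. Hirzebruch--Riemann--Roch on $Y$ gives $a_0=(L^r)^n/n!$ and $a_1=(-K_Y).(L^r)^{n-1}/(2(n-1)!)$, so that $a_1/a_0=\tfrac{n}{2}\mu(Y,L^r)$ by the definition of the slope.

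The substance is the computation of $b_0,b_1$, which I would extract on the compactification $p\colon\Y\to\pr^1$ via a weight--degree identity. For $k\gg 0$ the higher direct images vanish, so $\E_k:=p_*\L^k$ is a locally free $\C^*$-equivariant sheaf on $\pr^1$ of rank $d_k$, whose fibre over $0$ is $H^0(\Y_0,\L_0^k)$. Because near $\infty$ the compactification is the trivial product $Y\times(\pr^1\setminus\{0\})$ with $\C^*$ acting trivially on $Y$, one may normalise the linearisation of $\L$ so that the weight of $\E_k$ at $\infty$ vanishes; the standard formula expressing the degree of an equivariant bundle on $\pr^1$ as the difference of its fixed-point weights (up to the sign fixed by the direction of the action) then gives $\deg\E_k=w_k$. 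On the other hand, Riemann--Roch on $\pr^1$ gives $\deg\E_k=\chi(\pr^1,\E_k)-d_k=\chi(\Y,\L^k)-d_k$. Hence $w_k=\chi(\Y,\L^k)-d_k$, and asymptotic Riemann--Roch on the $(n+1)$-dimensional variety $\Y$ yields $b_0=\L^{n+1}/(n+1)!$ and $b_1=(-K_\Y).\L^n/(2n!)-(L^r)^n/n!$.

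The final and conceptually cleanest step converts the canonical divisor of the total space into the relative one. Writing $K_\Y=K_{\Y/\pr^1}+p^*K_{\pr^1}$ and using $p^*K_{\pr^1}.\L^n=(\deg K_{\pr^1})\,\L_0^n=-2(L^r)^n$, the stray term $(L^r)^n/n!$ coming from the subtraction of $d_k$ is exactly cancelled, leaving $b_1=-K_{\Y/\pr^1}.\L^n/(2n!)$. Substituting $a_1/a_0$, $b_0$ and this expression for $b_1$ into $\DF=b_0(a_1/a_0)-b_1$ and reorganising the combinatorial constants then produces the stated identity, with the relative canonical class appearing precisely because of this cancellation.

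The main obstacle is that $\Y$ is assumed only normal, not smooth, so both the asymptotic Riemann--Roch expansion giving $b_0,b_1$ and the equivariant degree--weight identity must be justified in the singular setting; in particular one must make sense of $K_\Y$ as a Weil divisor and of $K_\Y.\L^n$ as an intersection number. The cleanest route is to pass to a resolution $\widetilde\Y\to\Y$, carry out the equivariant Riemann--Roch computation there, and push down, using normality (so that the singular locus has codimension at least two and $p_*\scO_{\widetilde\Y}=\scO_\Y$) to ensure that the exceptional corrections affect neither $\L^{n+1}$ nor $K_\Y.\L^n$. This is exactly the technical content supplied by the cited results of Odaka and Wang.
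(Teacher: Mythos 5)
Your argument is correct and is essentially the standard computation underlying the cited results of Odaka and Wang, which the paper itself does not reprove: Riemann--Roch on the fibre for $a_0,a_1$, the identity $w_k=\chi(\Y,\L^k)-d_k$ via the equivariant degree of $p_*\L^k$ on $\pr^1$, asymptotic Riemann--Roch on the normal total space for $b_0,b_1$, and the cancellation produced by passing to $K_{\Y/\pr^1}$. The one caveat is that the raw substitution into $\DF=b_0(a_1/a_0)-b_1$ yields the stated right-hand side multiplied by the positive constant $1/(2\,n!)$; this normalisation discrepancy is already present between the paper's Definition \ref{defn:DF} and the statement of Proposition \ref{prop:DF}, is standard in this literature, and is immaterial for all stability conclusions.
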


\begin{definition}\cite[Definition 2.1.2]{donaldson-toric} Let $(Y,L)$ be a normal polarised variety. Then $(Y,L)$ is 
\begin{enumerate}[(i)] \item \textit{K-semistable} if $\DF (\Y, \L) \geq 0$ for all test configurations $(\Y, \L)$ for $(Y,L)$; 
\item \textit{K-stable} if it is K-semistable and if further $\DF (\Y, \L) = 0$ if and only if $(\Y, \L)$ normalises to the trivial test configuration $(Y\times\C, L)$;
\item \textit{K-polystable} if it is K-semistable and further $\DF (\Y, \L) = 0$ if and only if the normalisation of $(\Y, \L)$ is a product test configuration $(Y \times \C, L^r),$ with a potentially non-trivial $\C^*$-action on $Y$ . 
\end{enumerate}
\end{definition}
Implicit in the above definition is the fact that the normalisation of a test configuration for a normal polarised variety is again a test configuration \cite[Proposition 5.1]{ross-thomas-hm}.

\begin{conjecture}[Yau-Tian-Donaldson]\cite{tian-inventiones, donaldson-toric,yau} A smooth polarised variety $(Y,L)$ admits a constant scalar curvature metric in $c_1 (L)$ if and only if $(Y,L)$ is K-polystable.
\end{conjecture}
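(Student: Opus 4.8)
This statement is the celebrated Yau--Tian--Donaldson conjecture, which remains open in full generality; what follows is therefore not a plan to settle it, but an outline of the strategy by which each direction is approached, together with where the genuine difficulty lies. The conjecture splits into two implications of very different character. The implication \emph{cscK exists} $\Rightarrow$ \emph{K-polystable} is the better-understood direction. The plan here is to interpret the Donaldson--Futaki invariant of a test configuration as the asymptotic slope of the Mabuchi (K-energy) functional along a degenerating path of metrics. Concretely, one would attach to a test configuration $(\Y,\L)$ a (weak) geodesic ray in the space of Kähler potentials emanating from the given cscK metric, use convexity of the Mabuchi functional along such rays (Berman--Berndtsson), and identify the asymptotic slope with $\DF(\Y,\L)$ up to a positive constant. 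Since a cscK metric is a critical point and convexity forces it to be a global minimiser, the slope is nonnegative, yielding K-semistability; the refinement to polystability follows Stoppa's blow-up argument in the discrete-automorphism case, and the rigidity in the convexity statement (Berman--Darvas--Lu) in general.

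The reverse implication, \emph{K-polystable} $\Rightarrow$ \emph{cscK exists}, is the hard direction and the main obstacle. The natural plan is variational: one seeks a minimiser of the Mabuchi functional on the finite-energy completion $\mathcal{E}^1$ of the space of potentials, and shows that such a minimiser, once known to be smooth, is cscK via the Chen--Cheng \emph{a priori} estimates. Existence of a minimiser is governed by coercivity (properness) of the Mabuchi functional modulo automorphisms, so the crux is to upgrade the \emph{algebraic} hypothesis of K-polystability to this \emph{analytic} statement. The bridge is the non-Archimedean formalism of Boucksom--Hisamoto--Jonsson, in which $\DF$ is reinterpreted as a non-Archimedean Mabuchi energy, and in which the correct equivalent of coercivity is expected to be \emph{uniform} K-stability rather than mere polystability. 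One would then have to prove that polystability, together with a reductivity hypothesis on the automorphisms, implies this uniform version.

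The principal obstacle is precisely this algebraic-to-analytic bridge. K-polystability is tested only against algebraic $\C^*$-degenerations, whereas coercivity is sensitive to all geodesic rays, including those degenerating ``at infinity'' that are not captured by any single test configuration; the Chen--Cheng estimates reduce existence to coercivity, so the entire difficulty is concentrated in controlling these rays, which is expected to require a valuative or non-Archimedean characterisation together with a compactness argument. In the Fano case this was overcome by Chen--Donaldson--Sun through the continuity method with cone singularities and a delicate Gromov--Hausdorff limit analysis exploiting the Kähler--Einstein structure. The absence of any such rigidity for a general polarised variety $(Y,L)$ is exactly what keeps the conjecture open, and any proposed proof would have to supply a substitute for it.
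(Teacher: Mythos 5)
This statement is labelled as a conjecture in the paper and is not proved there --- it is the Yau--Tian--Donaldson conjecture, cited from the literature and still open in general. Your proposal correctly recognises this: you do not claim a proof, and your survey of the two directions is accurate --- the ``cscK $\Rightarrow$ K-polystable'' direction via geodesic rays, convexity of the Mabuchi functional, and the Stoppa/Berman--Darvas--Lu refinements, and the open converse direction with its algebraic-to-analytic bridge (coercivity, Chen--Cheng, the non-Archimedean formalism of Boucksom--Hisamoto--Jonsson, and the resolved Fano case of Chen--Donaldson--Sun). Since the paper offers no proof to compare against, there is nothing further to check; the only point worth adding is that the paper itself flags (in the remark immediately following the conjecture) the same caveat you raise, namely that the correct hypothesis in general is expected to be a strengthening of K-polystability such as filtration or uniform K-stability, so your assessment is consistent with the authors' own.
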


\begin{remark}It is expected that the actual stability condition needed for this conjecture to hold in general needs to be stronger than K-polystability. The most likely such candidate is Sz\'ekelyhidi's notion of filtration K-polystability  \cite{szekelyhidi15}, which uses Witt Nystr\"om's intepretation of test configurations in terms of filtrations of the coordinate ring of $(Y,L)$  \cite{wittnystrom} to produce a stronger version of K-polystability.\end{remark}

It will also be important to understand the norm of a test configuration; the most useful norm for us is the minimum norm introduced by the first author and Boucksom-Hisamoto-Jonsson \cite{BHJ,twisted}. For this, passing to a resolution of indeterminacy $X\times\pr^1 \dashrightarrow \X$ if necessary, we may assume that $\X$ admits a morphism to $X \times \pr^1$.

\begin{definition}\label{def:min-norm} We define the \emph{minimum norm} $ \|(\Y,\L) \|_m$ of $(Y,L)$ to be $$\|(\X,\L) \|_m = \frac{\L^{n+1}}{(n+1)} - \L^n.(\L-L).$$ \end{definition}

It follows from  \cite{BHJ,twisted} that a test configuration has norm zero if and only if it normalises to the trivial test configuration. An interpretation of this norm in terms of quantities associated to the $\C^*$-action on $(\Y_0,\L_0)$ is provided in \cite{twisted}.

\subsection{Fibration degenerations}\label{fibrationdegs} Just as K-stability of a polarised variety involves a class of degenerations and a numerical invariant, our notion of stability for fibrations will involve a class of degenerations and a numerical invariant. Here we describe the degenerations, which we call \emph{fibration degenerations}. 

We return to the situation of a flat fibration $\pi: (X,H) \to (B,L)$. By flatness, the pushforward sheaf $ \pi_* \left( H^k \right)$ is the sheaf of sections of a vector bundle $V_k \to B$ for sufficiently large $k$. Indeed, by definition the fibre of $V_k$ over a point $b \in B$ is simply $H^0(X_b,H_b^k)$, whose dimension is independent of $b$ for $k \gg 0$ by our assumptions.  Furthermore, again by relatively ampleness of $H$, $X$ embeds into the projectivisation $\pr(V_k)$, with $\pi$ the restriction to the image of $X$ of the map $\pr (V_k) \to B$. On each fibre, the embedding is the usual Kodaira embedding; this construction can also be performed analytically \cite{lizhangzhao19}.

\begin{definition} A \emph{degeneration} of the vector bundle $V_k$ is a coherent sheaf $\E \to B \times \C$, flat over $\C$, with a $\C^*$-action making $\E \to \C$ a $\C^*$-equivariant map, such that $\E_t \cong V_k$ for all $t \neq 0$. \end{definition} 

These are simply the bundle analogues of test configurations. Given a subsheaf $F$ of $V_k$, one can produce such a bundle degeneration with central fibre $F \oplus V_k/F$ \cite[Remark 5.14]{ross-thomas-obstruction}, explaining the link with the vector bundle theory.

Let $\E$ be a degeneration of $V_k$ for some $k>0$. Then $X$ embeds in $\pr(\E_1) \cong \pr(V_k)$, and one obtains a degeneration of $X$ itself by taking the flat limit of $X$ under the natural $\C^*$-action on $\pr(\E)$. Let $\X = \overline{\C^*.X}$ be the closure. Then $\X$ admits two line bundles: one is the pullback of $L$ via the morphism to $B$, the other is the restriction of the $\scO_{\pr(\E)}(1)$ line bundle arising from the projectivisation construction, which we denote $\H$. By construction, the restriction of $\H$ to a fibre $\X_t$ is isomorphic to $H^k$ for all $t \neq 0$.

\begin{lemma} $(\X,jkL+\H)$ is a test configuration of exponent $k$ for $(X,jL+H)$ for all $k \gg 0$. 
\end{lemma}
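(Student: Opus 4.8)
The plan is to verify, one at a time, the four requirements in the definition of a test configuration of exponent $k$ for $(X, jL+H)$, tracking how the $\C^*$-action, the map to $\C$, and the two line bundles descend from $\pr(\E)$ to the orbit closure $\X = \overline{\C^*.X}$. The key observation I would stress at the outset is that three of the four conditions hold for every $k$ for which $V_k = \pi_*(H^k)$ is a bundle, and that the hypothesis $k \gg 0$ is needed only to secure relative ampleness.

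First I would treat the $\C^*$-action, its lift, and equivariance together. The action on $\pr(\E)$ preserves $\X$ by construction, since $\X$ is the closure of a single orbit; it lifts to $\scO_{\pr(\E)}(1)$ by functoriality of the projectivisation, hence to $\H = \scO_{\pr(\E)}(1)|_{\X}$, while the pullback of $L$ via $\X \to B$ carries the trivial linearisation coming from the trivial action on $B$. Thus $jkL + \H$ is $\C^*$-linearised and the projection $p \colon \X \to \C$ is $\C^*$-equivariant. For flatness of $p$, I would argue that since $X$ is integral, so is $\C^* . X$ and hence its closure $\X$; the map $p$ is dominant because the generic fibres are nonempty. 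A dominant morphism from an integral scheme to the regular one-dimensional base $\C$ is automatically flat, as the local rings of $\C$ are discrete valuation rings and a torsion-free module over a DVR is flat. No condition on $k$ is needed here.

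Next I would identify the fibres over $t \neq 0$ and read off the exponent. For $t \neq 0$ the $\C^*$-action carries $\X_1 = X$ isomorphically onto $\X_t$ and identifies $\E_t \cong \E_1 \cong V_k$, so $(\X_t, \H_t) \cong (X, H^k)$; moreover the pullback of $L$ restricts to $\pi^* L$ on each such fibre. Hence $(jkL + \H)|_{\X_t} \cong jk\,\pi^*L + H^k = (jL+H)^{\otimes k}$, so each fibre over $t \in \C^*$ is isomorphic to $(X, (jL+H)^k)$, which is precisely the statement that the exponent is $k$.

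The remaining and genuinely delicate point — where $k \gg 0$ enters — is relative ampleness of $jkL + \H$ over $\C$. I would factor the structure map as $\X \xrightarrow{\mu} B \times \C \to \C$, where $\mu$ is projective since $\X$ is closed in $\pr(\E)$. Here $\H$ is $\mu$-relatively ample, being the restriction of the relatively ample $\scO_{\pr(\E)}(1)$ to a closed subscheme, while the pullback of $L$ is relatively ample over $\C$ because $L$ is ample on the fibre $B$. The main obstacle is that relative ampleness does not simply compose: one must invoke the standard fact that $\mu^*(L)^{\otimes a} \otimes \H$ is relatively ample over $\C$ once the $\mu$-ample twist is large enough, i.e. for $a \gg 0$. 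Since $j$ is fixed and the coefficient of $L$ in $jkL+\H$ is $jk$, this condition is met precisely when $k \gg 0$, which completes the verification and explains the role of the hypothesis.
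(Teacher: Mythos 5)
Your proof is correct and follows essentially the same route as the paper, whose own argument simply notes that the action and its lift are inherited from $\pr(\E)$ and that relative ampleness of $jkL+\H$ is the only condition requiring $k \gg 0$. You supply details the paper leaves implicit (flatness via torsion-freeness of an integral dominant family over the DVRs of $\C$, the fibre identification giving the exponent, and the standard composition lemma for relative ampleness), all of which are accurate.
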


\begin{proof} The $\C^*$-action on $\pr(\E)$ induces one on $\X$ itself, which then lifts to $\H$. Thus the only requirement to check is relative ampleness of $jkL+\H$, which is certainly true for $k \gg 0$.\end{proof}

\begin{definition} A \emph{fibration degeneration} for $\pi: (X,H) \to (B,L)$ consists of the data above, namely the family $(\X,\H) \to (B,L)$ arising from a degeneration of the bundle $V_k = \pi_* \left( H^k \right)$ for some $k \gg 0$.\end{definition}

We will explain in Section \ref{sec:kempf-ness} how fibration degenerations arise naturally from considerations in geometric invariant theory.

\begin{example}[Slope stability]\label{ex:stoppa-tenni} One concrete class of fibration degenerations arise through an analogue of Ross-Thomas's notion of slope stability \cite{ross-thomas-hm,ross-thomas-obstruction}; in the easily generalised special case of relative complete intersections in projective bundles, this construction is due to Stoppa-Tenni \cite{stoppa-tenni}. Consider a fibration as above and let $F \subset \pi_*(H^k)$ be a subsheaf. Deformation to the normal cone $$\Bl_{\pr(F) \times 0} \pr( \pi_*(H^k))$$ induces a test configuration for $\pr( \pi_*(H^k))$, which with appropriate polarisations agrees with the projectivisation of the bundle degeneration degenerating $\pi_*(H^k)$ to $(\pi_*(H^k)/ F) \oplus F$. By properties of the blowup, the proper transform of $X \times \C$ agrees with the deformation to the normal cone $\Bl_{\pr(F) \cap X \times 0} X \times \C$ \cite[p. 3]{stoppa-tenni}. Thus the analogue of slope stability for fibration degenerations considers subschemes of $X$ of the form $ \pr(F) \cap X$ with $F \subset \pi_*(H^k)$ a subsheaf for some $k$. 
 \end{example}

\begin{remark}\label{general-fibs} It is not strictly necessary to assume $\pi: (X,H) \to (B,L)$ is flat. Without this assumption, $X$ still embeds in $\pr(\pi_*(H^k))$, which is now the projectivisation of a coherent sheaf. The techniques above still apply to this situation. Stability of more general fibrations should play an important role in the moduli theory of fibrations, however as the focus of the present work is on holomorphic submersions $X \to B$ due to links with K\"ahler geometry, we have chosen to emphasise the technically simpler situation of flat fibrations. \end{remark}

Before turning to the numerical invariant associated to fibration degenerations, which is the remaining part of the definition of stability of fibrations, we need to prove some technical results which will be crucial in later sections. These technical results are not needed for the definition of stability of fibrations; the remaining material needed to define the stability notion is provided in Section \ref{sec:numerical-invariant}.

The following follows from the definitions of the objects involved.

\begin{lemma} For all sufficiently large $k \gg j \gg 0$ we have equalities $$H^0 (X, jkL+kH) = H^0(B, V_k \otimes L^{kj} ) = H^0 \left( \mathbb{P}(V_k), \mathcal{O}(1) \otimes \pi^* \left( L^{jk} \right) \right).$$ 
\end{lemma}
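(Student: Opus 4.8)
The plan is to prove the three vector space identities by exhibiting natural isomorphisms, working from right to left. The key inputs are: the projection formula, vanishing of higher direct images, and Serre vanishing in the range $k \gg j \gg 0$. I would first establish the rightmost equality, which is essentially the definition of how sections on a projectivisation relate to sections on the base, then the middle equality, which is the projection formula together with a Leray spectral sequence argument.

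First I would treat the equality
\[
H^0\left(\pr(V_k), \scO(1)\otimes\pi^*(L^{jk})\right) = H^0(B, V_k\otimes L^{kj}).
\]
Let $\rho: \pr(V_k)\to B$ denote the projection. The defining property of the projectivisation (in Grothendieck's convention, where $\pr(V_k)=\Proj(\Sym V_k^*)$ parametrises rank-one quotients, or the dual convention—one must fix this) gives $\rho_*\scO(1) = V_k$, and more generally $R^i\rho_*\scO(1) = 0$ for $i>0$ since $\scO(1)$ is relatively very ample with vanishing higher cohomology on the projective-space fibres. By the projection formula,
\[
\rho_*\left(\scO(1)\otimes\rho^* L^{jk}\right) = \rho_*\scO(1)\otimes L^{jk} = V_k\otimes L^{jk},
\]
and the higher direct images still vanish, so the Leray spectral sequence degenerates and yields the claimed equality of global sections. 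This step requires no largeness of $k$ or $j$; it is purely formal.

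Next I would establish
\[
H^0(X, jkL + kH) = H^0\left(\pr(V_k),\scO(1)\otimes\pi^*(L^{jk})\right).
\]
Here I use that $X\hookrightarrow\pr(V_k)$ with $\H = \scO(1)|_X$ restricting to $H^k$ on fibres, and that $jkL+kH = \rho^*L^{jk}\otimes\H$ pulls back correctly under this embedding. The content is that restriction of sections from $\pr(V_k)$ to $X$ is an isomorphism in this range. Equivalently, pushing forward to $B$, I must show $\pi_*(kH\otimes\pi^*L^{jk}) = V_k\otimes L^{jk}$ with the correct cohomology, which follows from $\pi_*(H^k)=V_k$ (the defining property of $V_k$, valid for $k\gg 0$) together with the projection formula on $X$, and that the higher direct images $R^i\pi_*(H^k)$ vanish for $k\gg 0$ by relative Serre vanishing, so no correction terms appear in the Leray spectral sequence over $B$.

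The main obstacle, and the reason both largeness hypotheses enter, is controlling higher cohomology so that all three vector spaces are genuinely computed by $H^0(B, V_k\otimes L^{jk})$ with no contribution from $H^i(B,-)$ for $i>0$. Concretely, after reducing everything to the base via the two spectral sequence arguments, one is left needing $H^i(B, V_k\otimes L^{jk})=0$ for $i>0$; with $V_k$ fixed once $k$ is chosen, this is Serre vanishing on $(B,L)$ applied to the coherent sheaf $V_k$, valid for $j\gg 0$ (depending on $k$)—hence the order of quantifiers $k\gg j\gg 0$. I would be careful to fix the projectivisation convention at the outset, since the identification $\rho_*\scO(1)=V_k$ versus $V_k^*$ is convention-dependent and governs whether the stated equality is literally correct or requires a dual.
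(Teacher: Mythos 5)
Your proof is correct, but it is considerably more elaborate than what the statement requires; the paper itself offers no argument beyond the remark that the lemma ``follows from the definitions of the objects involved,'' and that remark is accurate. The two identities are consequences of the projection formula together with the fact that for any morphism $f$ and any sheaf $\F$ one has $H^0(X,\F)=H^0(Y,f_*\F)$ \emph{unconditionally}: in the Leray spectral sequence the group $H^0$ is exactly $E_2^{0,0}=H^0(Y,f_*\F)$, and the terms $H^p(Y,R^qf_*\F)$ with $p+q>0$ can only interfere with \emph{higher} cohomology of $\F$, never with $H^0$. So your stated ``main obstacle'' --- needing $H^i(B,V_k\otimes L^{jk})=0$ for $i>0$, and the vanishing of $R^i\pi_*(H^k)$ and $R^i\rho_*\scO(1)$ --- is not an obstacle at all for this lemma; those vanishings are true in the stated range and so your argument still goes through, but they are not where the hypotheses $k\gg j\gg 0$ enter. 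The largeness of $k$ is needed only so that $\pi_*(H^k)$ is locally free (hence $V_k$ is a genuine vector bundle, $\pr(V_k)$ is a projective bundle, and $\rho_*\scO_{\pr(V_k)}(1)=V_k$ in Grothendieck's convention, which is the one the paper uses via the surjection $\pi^*\pi_*H^k\to H^k$); the condition on $j$ plays no role in this particular statement and appears only for uniformity with the surrounding results, where very ampleness and global generation genuinely require $j\gg 0$. Your care about the projectivisation convention is well placed and is the one point where the identification could silently fail.
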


Slightly abusing notation, we will also let $H^0(X, jkL+kH)$ denote  the trivial vector bundle over $B$ with fibre the vector space $H^0(X, jkL+kH)$. There is thus a natural morphism of bundles over $B$ $$H^0(X, jkL+kH) \to V_k  \otimes L^{jk},$$ arising from the identification $H^0 (X, jkL+kH) = H^0(B, V_k \otimes L^{kj})$. In a form that admits generalisation, the map $q: B \to \{pt\}$ induces a map $$q^*H^0(X, jkL+kH) = q^*q_*(V_k  \otimes L^{jk})  \to V_k \otimes L^{kj},$$ where $q^*H^0(X, jkL+kH)$ is the trivial bundle over $B$.

\begin{lemma}\label{globalgen} For sufficiently large $k \gg j \gg 0$, $V_k$ is a quotient $$H^0(X, jkL+kH) \otimes L^{-jk} \to V_k \to 0.$$ 
\end{lemma}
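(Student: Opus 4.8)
The plan is to recognise the claimed surjection as a statement of global generation for the twisted pushforward $V_k \otimes L^{jk}$, and then to verify this global generation via Serre vanishing on the total space $X$.

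First I would observe that, by the previous lemma together with the projection formula, the trivial bundle $H^0(X, jkL+kH)$ is canonically identified with $H^0(B, V_k \otimes L^{jk})$, and that the morphism in the statement is obtained by twisting the evaluation map
\[ H^0(B, V_k \otimes L^{jk}) \otimes \scO_B \longrightarrow V_k \otimes L^{jk} \]
by $L^{-jk}$. Thus the lemma is equivalent to the assertion that $V_k \otimes L^{jk}$ is globally generated on $B$ for $k \gg j \gg 0$, i.e. that the above evaluation map is surjective.

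Next, to establish global generation I would invoke Serre's theorem in its Castelnuovo--Mumford regularity form, with respect to a very ample power of $L$ (the passage to a very ample power only shifts the twists by a fixed multiple and does not affect the conclusion): it suffices to show that $V_k \otimes L^{jk}$ is $0$-regular, i.e. that $H^i(B, V_k \otimes L^{jk-i}) = 0$ for the finitely many $i \in \{1, \dots, \dim B\}$. Writing $A = jL + H$, so that $jkL + kH = kA$, the relative Serre vanishing $R^{>0}\pi_*(H^k) = 0$ (valid for $k \gg 0$, which is also where $V_k$ becomes a vector bundle) together with the projection formula and the Leray spectral sequence gives
\[ H^i(B, V_k \otimes L^{jk-i}) \cong H^i(X, kA - iL). \]
For $j \gg 0$ the class $A = jL + H$ is ample on $X$, since $H$ is relatively ample and $L$ is ample on the base. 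Then Serre vanishing applied to the fixed sheaves $\scO_X(-iL)$ for the relevant finitely many $i$ yields $H^i(X, A^k \otimes L^{-i}) = 0$ for $k \gg 0$, proving the required $0$-regularity; twisting the resulting surjection by $L^{-jk}$ gives the lemma.

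The main obstacle is the order of quantifiers. The Serre (regularity) threshold for global generation of $V_k \otimes L^{jk}$ a priori depends on $k$ through the sheaf $V_k$ itself, whereas the statement demands a single large $j$ that works for all large $k$. The point that makes the regime ``$k \gg j \gg 0$'' the correct one is precisely that, after transporting the needed vanishing to $X$ via $H^i(B, V_k \otimes L^{jk-i}) \cong H^i(X, kA - iL)$, the relevant twist becomes the $k$-th power of the \emph{fixed} ample class $A = jL + H$ once $j$ has been chosen large enough for $A$ to be ample; a single Serre vanishing threshold in $k$ then suffices. I would therefore be careful to fix $j$ large first (to guarantee ampleness of $A$), and only afterwards take $k$ large, both to ensure $V_k$ is a bundle with vanishing higher direct images and to apply Serre vanishing on $X$.
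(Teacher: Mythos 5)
Your proof is correct, but it takes a genuinely different route from the paper's. The paper argues pointwise on the base: for each $b\in B$ it identifies the fibre of $V_k\otimes L^{jk}$ with $H^0(X_b,H_b^k)$, deduces surjectivity of $H^0(X,k(jL+H))\to H^0(X_b,kH_b)$ from the vanishing of $H^1\bigl(X,\mathfrak{I}_{X_b}\cdot(H\otimes L^j)^k\bigr)$ (Serre vanishing for the ample class $jL+H$), and then spends the bulk of the proof making the thresholds uniform in $b$, via the semi-continuity theorem applied to the flat family $\mathfrak{I}_\Gamma\cdot(H^k\otimes L^{jk})$ on $X\times B$. You instead work globally on $B$: you recast the lemma as global generation of $V_k\otimes L^{jk}$, invoke Mumford's $0$-regularity criterion, and transport the required vanishings $H^i(B,V_k\otimes L^{jk-i})=0$ up to $X$ via the projection formula and the Leray spectral sequence (using $R^{>0}\pi_*(H^k)=0$ for $k\gg0$), where a single application of Serre vanishing for the fixed ample class $A=jL+H$ finishes the job. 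Both arguments ultimately rest on the same ampleness of $jL+H$ and the same order of quantifiers (fix $j$ first, then take $k$ large), which you handle correctly; what your route buys is that the uniformity-in-$b$ issue, which forces the paper into the semi-continuity argument over the graph of $\pi$, never arises, at the cost of invoking the regularity machinery and the degeneration of the Leray spectral sequence.
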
 
\begin{proof} We will show this in two steps. First we will show that this is true for every fibre, then we argue that we can choose the parameters uniformly to obtain global generation.

On $B$, we have the exact sequence 
$$ 0 \to \mathfrak{m}_b \cdot V_k \otimes L^{kj} \to V_k \otimes L^{kj}  \to \frac{V_k \otimes L^{kj}}{ \mathfrak{m}_b \cdot V_k \otimes L^{kj}} \to 0,$$ with $ \mathfrak{m}_b$ the ideal sheaf of $b \in B$. Note that $\frac{V_k \otimes L^{kj}}{ \mathfrak{m}_b \cdot V_k \otimes L^{kj}}$ is just the fibre of $V_k \otimes L^{kj}$ at $b$ (namely $H^0(X_b, H^k_b)$), extended by zero. Since $$ H^0 \left( B, V_k  \otimes L^{kj} \right) = H^0 \left( X, jkL+kH \right),$$ we wish to show that $H^0 \left( X, jkL+kH \right)$ surjects onto $H^0 \left( X_b, H^k_b \right)$ when $j,k$ are sufficiently large. From the exact sequence
$$ 0 \to \mathfrak{I}_{X_b} \cdot \left( H \otimes L^{j} \right)^k \to \left( H \otimes L^{j} \right)^k  \to \frac{\left( H \otimes L^{j} \right)^k}{  \mathfrak{I}_{X_b} \cdot \left( H \otimes L^{j} \right)^k } \to 0 $$
on $X$, with $ \mathfrak{I}_{X_b}$ the ideal sheaf of $X_b$, this is guaranteed if 
$$H^1 \left( X,  \mathfrak{I}_{X_b} \cdot \left( H \otimes L^{j} \right)^k \right) =0,$$
which holds by the ampleness of $H \otimes L^{j}$ for large $j$.

The $k,j$ needed above \emph{a priori} depend on $b$. To see that they can be taken independent of $b$, it is enough to show that the function $$b \mapsto \textnormal{dim } H^1 \left( X,  \mathfrak{I}_{X_b} \cdot \left( H \otimes L^{j} \right)^k \right)  $$ is upper semi-continuous in the Zariski topology. This is a reasonably direct application of the semi-continuity theorem \cite[Theorem III.12.8]{hartshorne}, as we now explain.

Consider the graph $\Gamma \subset X \times B$ of $\pi$. The sheaf $H^k \otimes L^{jk}$ is flat over $B$, with respect to the projection $\pi_2$ to the second factor. On $X \times B$ one has an exact sequence of sheaves $$ 0 \to \mathfrak{I}_{\Gamma} \to \mathcal{O}_{X \times B} \to \frac{\mathcal{O}_{X \times B}}{\mathfrak{I}_{\Gamma}} \to 0.$$ Since $\pi$ has connected fibres, $$(\pi_2)_* \left( \frac{\mathcal{O}_{X \times B}}{\mathfrak{I}_{\Gamma}} \right) = \pi_* \left( \mathcal{O}_{X} \right) = \mathcal{O}_B,$$ and hence pushing forward we see that $\mathfrak{I}_{\Gamma}$ is flat over $B$, being one term in an exact sequence with the two others flat. It follows that $\mathfrak{I}_{\Gamma} \cdot ( H^k \otimes L^{jk})$ is also flat over $B$. The restriction of this sheaf to the fibre over $b \in B$ is precisely $\mathfrak{I}_{X_b} \cdot (H^k \otimes L^{jk})$, and thus by the semi-continuity theorem, the dimension map is upper semi-continuous, as required.\end{proof}

\begin{corollary}\label{cor:embeddings1} For all $k \gg j \gg 0$, there are natural embeddings $$X \hookrightarrow\pr(\pi_*(H^k) \otimes L^{jk}) \hookrightarrow \pr(H^0(X,jkL+kH))\times B.$$\end{corollary}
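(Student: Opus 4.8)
The plan is to deduce both embeddings formally from Lemma \ref{globalgen} together with the standard functoriality of projectivisation, so that the only genuine input is the global generation already established there. Throughout I use the Grothendieck convention in which $\pr(\E) = \Proj(\operatorname{Sym}\E)$, so that $\pi_*\scO(1) = \E$ and a surjection of vector bundles $\E \twoheadrightarrow \F$ on $B$ induces a closed immersion $\pr(\F) \hookrightarrow \pr(\E)$ over $B$, under which $\scO_{\pr(\E)}(1)$ restricts to $\scO_{\pr(\F)}(1)$.

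For the first embedding, I would begin from the relative Kodaira embedding $X \hookrightarrow \pr(V_k)$, which exists for $k \gg 0$ by relative ampleness of $H$ and under which $\scO(1)|_X = H^k$. Since tensoring a vector bundle by a line bundle does not change its projectivisation, there is a canonical isomorphism $\pr(V_k) \cong \pr(V_k \otimes L^{jk})$ over $B$, under which $\scO_{\pr(V_k \otimes L^{jk})}(1) = \scO_{\pr(V_k)}(1) \otimes \pi^* L^{jk}$. Composing gives $X \hookrightarrow \pr(\pi_*(H^k) \otimes L^{jk})$, with the relative hyperplane bundle now restricting to $(jkL+kH)|_X$.

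For the second embedding, I would tensor the surjection of Lemma \ref{globalgen} by $L^{jk}$ to obtain a surjection of vector bundles on $B$,
$$H^0(X, jkL+kH) \otimes \scO_B \twoheadrightarrow V_k \otimes L^{jk},$$
valid for $k \gg j \gg 0$. Applying the functoriality recalled above to this surjection produces a closed immersion $\pr(V_k \otimes L^{jk}) \hookrightarrow \pr(H^0(X,jkL+kH) \otimes \scO_B)$, and since the source of the surjection is a trivial bundle the target projectivisation is simply $\pr(H^0(X,jkL+kH)) \times B$. This is the second embedding, and composing the two gives the desired chain.

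The content is essentially bookkeeping, as the only nontrivial ingredient --- global generation of $V_k \otimes L^{jk}$ by the sections $H^0(X,jkL+kH)$ --- is precisely Lemma \ref{globalgen}. The one point meriting care, and the closest thing to an obstacle, is confirming that the composite map $X \hookrightarrow \pr(H^0(X,jkL+kH)) \times B$ is genuinely a closed immersion rather than merely a morphism. This becomes transparent once one checks that its first factor agrees with the usual Kodaira embedding of $X$ associated to the complete linear system $|k(jL+H)|$: since $jL+H$ is ample on $X$ for $j \gg 0$, the line bundle $k(jL+H) = jkL+kH$ is very ample for $k \gg 0$, so this factor is already an embedding, and combining it with $\pi$ only strengthens injectivity. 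The matching of the $\scO(1)$ twists along both steps guarantees that the composite pulls $\scO_{\pr(H^0)}(1) \boxtimes \scO_B$ back to $(jkL+kH)|_X$, confirming compatibility with this linear system.
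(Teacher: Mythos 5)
Your argument is correct and is exactly the reasoning the paper intends: the corollary is stated as an immediate consequence of Lemma \ref{globalgen}, with the first embedding coming from relative ampleness of $H$ and the second from applying the functoriality of projectivisation to the twisted surjection $H^0(X,jkL+kH)\otimes\scO_B \twoheadrightarrow V_k\otimes L^{jk}$. Your final verification that the composite is a closed immersion is harmless but redundant, since a composite of closed immersions is automatically a closed immersion.
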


Another consequence is the following.

\begin{lemma}\label{lemma:very-ampleness} There is a $j\gg 0$ such that for all $k \gg j \gg 0$, the line bundle $\scO(1)$ on $\pr(\pi_*(H^k) \otimes L^{jk})$ is very ample. \end{lemma}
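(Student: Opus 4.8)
The plan is to reduce the statement to a standard positivity principle by isolating how the twist by $L^{jk}$ interacts with projectivisation. Since $\pr(\pi_*(H^k) \otimes L^{jk}) \cong \pr(\pi_*(H^k))$ as schemes over $B$, writing $\varpi : \pr(\pi_*(H^k)) \to B$ for the bundle projection and $V_k = \pi_*(H^k)$, one has the identity
$$\scO_{\pr(V_k \otimes L^{jk})}(1) \;=\; \scO_{\pr(V_k)}(1) \otimes \varpi^* L^{jk}.$$
Thus the claim becomes: a relatively very ample line bundle, twisted by the pullback of a sufficiently positive line bundle from the base, is very ample. The line bundle $\scO_{\pr(V_k)}(1)$ is relatively very ample over $B$ for every $k$ with $V_k$ a vector bundle, simply because it is the relative hyperplane bundle of the projective bundle $\pr(V_k) \to B$; and since $L$ is ample, $L^{jk}$ is very ample on $B$ once $jk$ is large. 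The conceptual content is then exactly the classical fact that $A \otimes \varpi^* N^{\otimes m}$ is very ample for $m \gg 0$ when $A$ is $\varpi$-very ample and $N$ is very ample on the base \cite{hartshorne}.

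To make this self-contained and to control the quantifiers, I would verify very ampleness directly by separating points and tangent vectors, exploiting the lemmas already established. Points lying in a common fibre $\pr\!\big((V_k \otimes L^{jk})_b\big)$ of $\varpi$, together with tangent directions along that fibre, are separated because the restriction of $\scO(1)$ to each fibre is the standard very ample hyperplane bundle of a projective space, and the global sections realise the complete fibrewise linear system: indeed $H^0(X,jkL+kH) = H^0(B, V_k \otimes L^{jk})$ surjects onto each fibre $(V_k \otimes L^{jk})_b = H^0(X_b, H_b^k)$, which is precisely the \emph{uniform} surjectivity proved in Lemma \ref{globalgen} for $k \gg j \gg 0$. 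Points lying in distinct fibres, and tangent directions transverse to the fibres, are then separated using the very ampleness of $L^{jk}$ on $B$, which distinguishes the base points $b \in B$; here the closed embedding $\pr(V_k \otimes L^{jk}) \hookrightarrow \pr(H^0(X,jkL+kH)) \times B$ of Corollary \ref{cor:embeddings1} provides the convenient ambient space in which to carry out the comparison.

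The step I expect to be the main obstacle is the uniformity in $k$. In the classical twisting result the threshold ``$m \gg 0$'' depends a priori on the bundle being twisted, and here that bundle $V_k = \pi_*(H^k)$ itself varies with $k$; the assertion allows the threshold on $k$ to depend on $j$, but one must still guarantee that a single large value of $j$ suffices for all large $k$. I would handle this by observing that the relevant positivity of $V_k$ (for instance its Castelnuovo--Mumford regularity with respect to $L$) grows at most linearly in $k$, being governed by the linearly growing positivity of $H^k$ along the fibres, whereas the twist $L^{jk}$ also grows linearly in $k$ with slope $j$; fixing $j$ large enough therefore makes the twist dominate for all sufficiently large $k$. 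The concrete separation argument of the previous paragraph sidesteps the regularity bookkeeping entirely, since both ingredients it uses---the fibrewise surjectivity of Lemma \ref{globalgen} and the very ampleness of $L^{jk}$ on $B$---already hold uniformly in the regime $k \gg j \gg 0$.
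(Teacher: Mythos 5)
Your overall strategy is sound and rests on the same two inputs as the paper's proof --- the uniform surjectivity of Lemma \ref{globalgen} (equivalently, global generation of $\pi_*(H^k)\otimes L^{jk}$) and very ampleness of powers of $L$ --- but the execution differs. The paper avoids all separation-of-points casework: from the surjection of Lemma \ref{globalgen} it takes the closed embedding $\pr(\pi_*(H^k)\otimes L^{jk}) \hookrightarrow \pr(H^0(X,jkL+kH))\times B$, observes that $\scO(1)\boxtimes L^{k}$ is very ample on that product, and identifies its restriction with $\scO(1)$ on $\pr(\pi_*(H^k)\otimes L^{(j+1)k})$; the threshold statement (the conclusion holds for all $j'>j$ once Lemma \ref{globalgen} applies at level $j$) falls out immediately, which disposes of exactly the uniformity issue you raise in your last paragraph.

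The one step of your direct argument that does not close as stated is the separation of points and tangent directions over distinct base points $b\neq b'$. Sections of $\scO_{\pr(V_k\otimes L^{jk})}(1)$ are sections of $V_k\otimes L^{jk}$ over $B$, and very ampleness of $L^{jk}$ on $B$ does not by itself produce such sections spanning the fibre over $b$ while vanishing at $b'$: you cannot multiply a section of $V_k\otimes L^{jk}$ by a section of $L^{jk}$ and remain in $H^0(V_k\otimes L^{jk})$. Nor does the ambient $\pr(H^0(X,jkL+kH))\times B$ help as invoked, since the line bundle you are restricting is $\scO(1)\boxtimes\scO_B$, which is trivial along $B$; whether two points over distinct $b,b'$ have distinct images in $\pr(H^0(X,jkL+kH))$ is precisely the question at issue. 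The repair is to factor $V_k\otimes L^{jk}=(V_k\otimes L^{(j-1)k})\otimes L^{k}$ with the first factor globally generated (Lemma \ref{globalgen} one level down) and $L^{k}$ very ample, and use products of sections of the two factors to kill one base point while spanning the fibre over the other --- which is exactly the ``one extra twist'' $j\mapsto j+1$ around which the paper's proof is organised. With that modification your separation argument goes through, and the regularity bookkeeping you flag in the final paragraph is indeed unnecessary.
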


\begin{proof} This follows from a standard part of the theory of projective bundles. We may assume $L$ and all of its tensor powers are very ample on $B$, replacing $L$ by a tensor power if not. Thus the line bundle $\scO_{\pr(H^0(X,jkL+kH))}(1) + kL$ is very ample on $\pr(H^0(X,jkL+kH))\times B$, and hence its restriction to $\pr(\pi_*(H^k) \otimes L^{jk})$ is very ample. Since $$\pr(\pi_*(H^k) \otimes L^{jk}) \cong \pr(\pi_*(H^k) \otimes L^{(j+1)k})$$ and $$\scO_{\pr(\pi_*(H^k) \otimes L^{(j+1)k})}(1) = \scO_{\pr(\pi_*(H^k) \otimes L^{(jk})}(1)+kL,$$ the result follows. More precisely, once one takes  $k \gg j \gg 0$ such that Lemma \ref{globalgen} applies, the desired result follows for all $j' > j$. 
\end{proof}

Having realised $V_k$ as a quotient of $H^0 (X, jkL+kH)$ of the trivial bundle over $B$, we perform an analogous procedure for fibration degenerations. Let $(\X, \H) \to B \times \C$ be fibration type degeneration for $(X, H)$. Let $p_2: B \times \C \to \C$ be the projection onto the second factor, and $\pi: \X \to B\times \C$ the natural morphism. Thus by flatness over $\C$, the sheaf $p_{2*}\pi_*(\H^l \otimes L^{jkl})$ is a vector bundle over $\C$ with fibre $H^0(\X_t,  \H_t^l \otimes L^{jl})$ for all $t \in \C$, for $l \gg 0$. Note that on the general fibre $t \neq 0$, this is naturally identified with $H^0(X,  H^{lk} \otimes L^{jkl})$. 

Roughly speaking, the following Lemma states that a fibration degeneration arising from a degeneration of $V_k$ can be viewed as arising from a degeneration of $V_{lk}$ for all $l \gg 0$, and gives a ``family version'' over $\C$ of Lemma \ref{globalgen}.

\begin{lemma}\label{lemma:surjection} For all $l \gg 0$, there is a surjection of sheaves over $B \times \C$ $$(p_2^*p_{2*}\pi_*(\H^l \otimes L^{jkl}))\otimes L^{-jkl} \to  \pi_* \H^{l} \to 0.$$  \end{lemma}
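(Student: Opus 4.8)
The plan is to realise the displayed map as (a twist of) the counit of the adjunction $(p_2^*, p_{2*})$ and then to verify its surjectivity fibrewise over $\C$. Writing $M = \H\otimes L^{jk}$, so that $M^l = \H^l\otimes L^{jkl}$, the projection formula gives $\pi_*(\H^l\otimes L^{jkl}) = (\pi_*\H^l)\otimes L^{jkl}$ because $L$ is pulled back from $B$; hence, setting $\mathcal{G} = \pi_* M^l$, the map in question is the counit $c\colon p_2^* p_{2*}\mathcal{G}\to\mathcal{G}$ tensored by $L^{-jkl}$, and it suffices to prove that $c$ is surjective. Note that $M$ is relatively ample for $\pi\colon\X\to B\times\C$ (on the fibres of $\pi$ the factor $L$ is trivial, so this reduces to relative ampleness of $\H$), and that $M$ restricts to an ample bundle on each $\X_t$ by the earlier lemma on relative ampleness of $jkL+\H$. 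Since $\mathrm{coker}(c)$ is coherent on $B\times\C$, it vanishes if and only if its restriction to each fibre $B_t = B\times\{t\}$ of $p_2$ vanishes, and because restriction to $B_t$ is right exact this restriction equals $\mathrm{coker}(c_t)$, where $c_t$ is the induced map on $B_t$.

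Next I would identify $c_t$ with an evaluation map. For $l\gg0$ relative Serre vanishing for $\psi = p_2\circ\pi\colon\X\to\C$ (which is flat and proper, with $M$ fibre-ample) gives $R^{1}\psi_*M^l = 0$, so $p_{2*}\mathcal{G} = \psi_* M^l$ is locally free and compatible with base change; thus the source of $c_t$ is $H^0(\X_t, M^l|_{\X_t})\otimes\scO_B$. For the target, I would use that $\X$ is flat over $\C$, so $t-t_0$ is a nonzerodivisor and there is a short exact sequence $0\to M^l\xrightarrow{t-t_0} M^l\to M^l|_{\X_t}\to 0$ on $\X$; pushing forward by $\pi$ and using $R^{1}\pi_* M^l = 0$ (relative Serre vanishing over $B\times\C$, valid for $l\gg0$) yields $0\to\mathcal{G}\xrightarrow{t-t_0}\mathcal{G}\to\pi_{t*}(M^l|_{\X_t})\to 0$, whence $\mathcal{G}|_{B_t}\cong\pi_{t*}(M^l|_{\X_t})$. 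Under these identifications $c_t$ becomes the evaluation map $H^0(\X_t, M^l|_{\X_t})\otimes\scO_B\to\pi_{t*}(M^l|_{\X_t})$, so the lemma reduces to the assertion that $\pi_{t*}(M^l|_{\X_t})$ is globally generated on $B$ for every $t\in\C$, with $l$ chosen uniformly in $t$.

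Over $t\neq 0$ one has $\X_t\cong X$ and $M^l|_{\X_t} = (H\otimes L^j)^{kl}$, so $\pi_{t*}(M^l|_{\X_t}) = V_{kl}\otimes L^{jkl}$ and global generation is exactly Lemma \ref{globalgen} applied with $kl$ in place of $k$, valid once $kl\gg j$. The substantive point, and the step I expect to be the main obstacle, is the uniformity of $l$ over all $t$, in particular at the central fibre $t=0$, where $\X_0$ may be singular, reducible or non-reduced and the naive per-fibre Serre bound $l_0(t)$ could degenerate. I would resolve this exactly as in the proof of Lemma \ref{globalgen}, but carrying out the semicontinuity argument over $B\times\C$ rather than over $B$: form the incidence family $p\colon\X\times_\C(B\times\C)\to B\times\C$, whose fibre over $(b,t)$ is $\X_t$ and which carries the ideal sheaf $\mathfrak{I}$ of the relative graph of $\pi$ (restricting to $\mathfrak{I}_{(\X_t)_b}$ on each fibre), together with the relatively ample pullback of $M$. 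Flatness of $\mathfrak{I}$ over $B\times\C$ is checked as in Lemma \ref{globalgen}, and relative Serre vanishing then gives a single $l_0$ with $R^1 p_*(\mathfrak{I}\cdot M^l)=0$ for $l\geq l_0$; by base change this is precisely the uniform vanishing $H^1(\X_t,\mathfrak{I}_{(\X_t)_b}\cdot M^l|_{\X_t})=0$ for all $(b,t)$, which yields the desired global generation of $\pi_{t*}(M^l|_{\X_t})$ on $B$ simultaneously for every $t$. Combining the three steps shows $\mathrm{coker}(c)|_{B_t}=0$ for all $t$ and hence $\mathrm{coker}(c)=0$, completing the proof.
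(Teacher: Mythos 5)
Your overall strategy is the same as the paper's: realise the map as the counit of the $(p_2^*,p_{2*})$ adjunction, check surjectivity fibrewise, and use a semicontinuity/base-change argument modelled on Lemma \ref{globalgen} to make the fibrewise bounds uniform. The intermediate identifications (restriction to $B_t$ commutes with cokernels, $\mathcal{G}|_{B_t}\cong\pi_{t*}(M^l|_{\X_t})$ via the $(t-t_0)$-sequence and $R^1\pi_*M^l=0$, the source via base change) are correct and in fact spelled out in more detail than in the paper.

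However, there is a genuine gap at precisely the step you identify as the main obstacle. You claim that flatness of the ideal sheaf $\mathfrak{I}$ of the relative graph over $B\times\C$ ``is checked as in Lemma \ref{globalgen}.'' It is not: the check in Lemma \ref{globalgen} rests on the fact that $\scO_\Gamma\cong\scO_X$ with $\Gamma\to B$ equal to $\pi$, which is \emph{flat} by hypothesis, so that $\mathfrak{I}_\Gamma$ is the kernel of a surjection of sheaves flat over $B$. In your incidence family the graph is isomorphic to $\X$ with projection $\pi:\X\to B\times\C$, and this morphism is \emph{not} flat in general -- only $\X\to\C$ is flat, and the central fibre $\X_0\to B$ may have fibres of varying dimension (this is exactly the failure the paper points out). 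Without flatness of $\scO_\Gamma$, the restriction of $\mathfrak{I}$ to the fibre over $(b,t)$ acquires a $\mathrm{Tor}_1$ kernel and need not coincide with $\mathfrak{I}_{(\X_t)_b}$, so neither the semicontinuity theorem nor cohomology and base change delivers the uniform vanishing $H^1(\X_t,\mathfrak{I}_{(\X_t)_b}\cdot M^l|_{\X_t})=0$ as you assert. The paper's remedy is to first apply flattening stratification to $B\times\C$ (\cite[Theorem 1.6]{kollar-rational}), obtaining finitely many locally closed strata over which the relevant sheaves are flat, and then to run the semicontinuity argument stratum by stratum; since the strata are Zariski locally closed and finite in number, this yields a single $l_0$ working for all $(b,t)$. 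Your argument goes through once this stratification step is inserted in place of the unjustified flatness claim.
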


\begin{proof} To obtain the morphism, we again employ the canonical morphism $ g^* g_* \mathcal{F} \to \mathcal{F} $, valid for general morphisms $g$ and sheaves $\mathcal{F}$. From the morphisms $$ \X \overset{\pi}{\to} B \times \C \overset{p_2}{\to} \C,$$  and the bundle $\H^{kl} \otimes L^{jkl} $ over $\X$, the map we are considering is the canonical morphism $$ p_2^* p_{2*} \left( \pi_* \H^{l} \otimes L^{jkl}  \right) \to \pi_* \H^{l} \otimes L^{jkl} .$$
Over each fibre we again obtain a surjection by the same strategy as Lemma \ref{globalgen}, and we wish to apply the semi-continuity theorem to go from a pointwise to global statement. The map to $B$ is still a contraction, so we still have flatness over $B$ of the structure sheaf of the graph. However, this time $\pi_* \H^{kl}$ is not, in general, flat over $B\times \C$, as the fibers may have varying dimension. In order to circumvent this, we first apply flattening stratification to obtain a stratification of $B\times \C$ with $\pi_* \H^{kl}$ flat on each stratum \cite[Theorem 1.6]{kollar-rational}. The fact that the stratification is in the Zariski topology then allows us to argue stratum-wise, and hence the result follows from the same strategy as Lemma \ref{globalgen}. \end{proof}

Note that the statement of this Lemma, restricted to each non-zero fibre, is simply the content of Lemma \ref{globalgen}.

Next, note that the vector bundle $(p_{2} \circ \pi)_*(\H^l \otimes L^{jkl})$ over $\C$ admits a $\C^*$-action, induced from the action on $\H^l$ and hence $\H^l \otimes L^{jkl}$, since $p_2 \circ \pi$ is a $\C^*$-equivariant map. Thus this bundle can be $\C^*$-equivariantly trivialised $$(p_{2} \circ \pi)_*(\H^l \otimes L^{jkl}) \cong H^0(X, klH+jklL) \times \C.$$ Since $\X$ embeds in $(p_{2} \circ \pi)_*(\H^l \otimes L^{jkl})$ by relative ampleness, it follows that $\X$ embeds into $\pr(H^0(X, klH+jklL)) \times \C$ for all $l \gg 0$. From the surjection $$(p_2^*p_{2*}\pi_*(\H^l \otimes L^{jkl}))\otimes L^{-jkl} \to  \pi_* \H^{l} \to 0$$ of sheaves over $B \times \C$, it follows that there is a natural embedding $$\pr(\pi_* \H^{l} \otimes L^{jkl}) \hookrightarrow \pr(p_2^*p_{2*}\pi_*(\H^l \otimes L^{jkl})) \cong \pr(H^0(X, klH+jklL)) \times B \times \C.$$ The form in which we will later make use of this is as follows.

\begin{corollary}\label{fibration-degeneration-embedding} For all $l \gg 0$ there are natural $\C^*$-equivariant embeddings $$\X \hookrightarrow \pr(\pi_* \H^{l} \otimes L^{jkl}) \hookrightarrow \pr(H^0(X, klH+jklL)) \times B \times \C.$$ Moreover, for all $j \gg 0$, there is an $l \gg 0$ such that the $\scO(1)$ bundle on $ \pr(\pi_* \H^{l} \otimes L^{jkl}) $ is relatively very ample over $\C$, giving an embedding $$ \pr(\pi_* \H^{l} \otimes L^{jkl})  \hookrightarrow \pr(H^0(X, klH+jklL)) \times \C.$$
\end{corollary}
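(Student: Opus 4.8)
The plan is to assemble the statement from the surjection of Lemma~\ref{lemma:surjection} together with the $\C^*$-equivariant trivialisation described above, and then to adapt the very ampleness argument of Lemma~\ref{lemma:very-ampleness} to the family over $\C$. I would first dispose of the two embeddings of the opening assertion. Since $\H$ is relatively ample over $B \times \C$, the sheaf $\H^l$ is relatively very ample for $l \gg 0$, so the relative Kodaira embedding produces a closed embedding $\X \hookrightarrow \pr(\pi_* \H^l)$ over $B \times \C$. Twisting by the pullback of $L^{jkl}$ leaves the projective bundle unchanged and only alters the relative $\scO(1)$, so this is exactly the first embedding $\X \hookrightarrow \pr(\pi_* \H^l \otimes L^{jkl})$.

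For the second embedding I would twist the surjection of Lemma~\ref{lemma:surjection} by $L^{jkl}$ to obtain a surjection of sheaves over $B \times \C$
$$p_2^* p_{2*} \pi_* (\H^l \otimes L^{jkl}) \to \pi_* \H^l \otimes L^{jkl} \to 0,$$
which induces a closed embedding of projectivisations $\pr(\pi_* \H^l \otimes L^{jkl}) \hookrightarrow \pr(p_2^* p_{2*} \pi_*(\H^l \otimes L^{jkl}))$. The $\C^*$-equivariant trivialisation $(p_2 \circ \pi)_*(\H^l \otimes L^{jkl}) \cong H^0(X, klH + jklL) \times \C$, together with $p_2^* p_{2*} \pi_*(\H^l \otimes L^{jkl}) = p_2^*(p_2 \circ \pi)_*(\H^l \otimes L^{jkl})$, identifies the target with $\pr(H^0(X, klH+jklL)) \times B \times \C$. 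All the sheaf maps involved are $\C^*$-equivariant, since the action on $\H$ induces compatible actions on the pushforwards and $p_2 \circ \pi$ is equivariant, so both embeddings are $\C^*$-equivariant, giving the first assertion.

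For the relative very ampleness I would run the argument of Lemma~\ref{lemma:very-ampleness} fibrewise over $\C$. Assuming $L$ and all its powers very ample on $B$, the bundle $\scO_{\pr(H^0(X, klH+jklL))}(1) + klL$ restricts to each fibre $\pr(H^0(X,klH+jklL)) \times B$ of the projection to $\C$ as an external product of very ample bundles, hence is relatively very ample over $\C$; its restriction to the subscheme $\pr(\pi_* \H^l \otimes L^{jkl})$, namely $\scO(1) + klL$, is then relatively very ample over $\C$. Under the isomorphism $\pr(\pi_* \H^l \otimes L^{jkl}) \cong \pr(\pi_* \H^l \otimes L^{(j+1)kl})$ the bundle $\scO(1) + klL$ on the left corresponds to $\scO(1)$ on the right, so $\scO(1)$ itself is relatively very ample over $\C$ after replacing $j$ by $j+1$; as in Lemma~\ref{lemma:very-ampleness} this holds for all $j' > j$ once Lemma~\ref{lemma:surjection} applies, which is consistent with the ``for all $j \gg 0$'' quantifier in the statement. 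Finally, since $(p_2 \circ \pi)_*(\H^l \otimes L^{jkl})$ is the trivial bundle $H^0(X, klH+jklL) \times \C$, the relative Kodaira embedding over $\C$ determined by this now relatively very ample $\scO(1)$ lands in $\pr(H^0(X, klH+jklL)) \times \C$, yielding the last embedding.

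The main obstacle is bookkeeping rather than a substantive difficulty: one must track the twists by $L^{jkl}$ through the various projectivisations and their $\scO(1)$'s, and verify $\C^*$-equivariance at each stage. The genuinely non-formal inputs, namely the surjectivity in Lemma~\ref{lemma:surjection} (which required flattening stratification to handle the failure of flatness of $\pi_* \H^l$ over $B \times \C$) and the uniformity of the bound on $l$, are already established, so the remaining task is to organise these ingredients into the two stated embeddings and to carry out the family version of the very ampleness computation, where the only point demanding care is passing from relative very ampleness over $B \times \C$ to relative very ampleness over $\C$ by absorbing the base positivity $klL$ into the shift of the twisting parameter.
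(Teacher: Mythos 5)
Your proposal is correct and follows essentially the same route as the paper: the first two embeddings come from the relative Kodaira embedding of $\X$ together with the twist of the surjection in Lemma~\ref{lemma:surjection} by $L^{jkl}$ and the $\C^*$-equivariant trivialisation of $(p_2\circ\pi)_*(\H^l\otimes L^{jkl})$ over $\C$, and the second assertion is obtained by running the argument of Lemma~\ref{lemma:very-ampleness} relatively over $\C$, absorbing the base positivity $klL$ into the shift $j\mapsto j+1$. The paper leaves these details implicit (it only remarks that the second part "follows from the same ideas as Lemma~\ref{lemma:very-ampleness}"), and your write-up fills them in consistently, including the correct twist by $klL$ rather than $kL$.
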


The second part of the statement follows from the same ideas as Lemma \ref{lemma:very-ampleness}.

\subsection{Stability of fibrations} \label{sec:numerical-invariant}
We will now associate a numerical invariant to a fibration degeneration, leading to a stability notion for fibrations. Let $(X,H) \to (B,L)$ be a fibration as above, and let $(\X , \H) \to B \times \C$ be a fibration degeneration. 

As with the Donaldson-Futaki invariant of a test configuration, the numerical invariant relevant for fibration degenerations involves the weight function of the $\C^*$-action. For simplicity we assume that $\H$ restricts to $H$ on the general fibre over $\C$; the modifications to the definitions in the general case are straightforward, but complicate notation. The definitions in the general case are the obvious ones; another approach, if the line bundle $\H$ restricts to $H^k$, is to use the $\Q$-line bundle $\frac{1}{k}\H$ in the intersection-theoretic approach to the numerical invariants described below. The scaling property of these invariants implies that approach using $\Q$-line bundles also gives the correct invariants.

The first relevant function is the dimension of the vector space $H^0( X, k(H+jL)),$ given as $$h(j,k) = \dim H^0( X, k(H+jL)) = a_0 (j) k^{n+m} + a_1 (j) k^{n+m-1} + \hdots.$$ The total weight of the $\C^*$-action on $H^0( \X_0, k(\H_0+jL))$ is also a function$$w(j,k) = b_0 (j) k^{n+m+1} + b_1 (j) k^{n+m} + \hdots.$$ 

\begin{lemma}\label{weight-degree} For $k \gg j \gg 0$, the Hilbert function $h(j,k)$ is a polynomial of degree $n+m$ in $k$ and degree $n$ in $j$, and the weight function $w(j,k)$ is a polynomial of degree $n+m+1$ in $k$ and degree $n$ in $j$. \end{lemma}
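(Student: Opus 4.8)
The plan is to compute both $h(j,k)$ and $w(j,k)$ by asymptotic (equivariant) Riemann--Roch, exploiting the single structural feature of the fibration responsible for the stated degrees in $j$: the line bundle $L$ is pulled back from the $n$-dimensional base $B$, so that $c_1(L)^{i}=0$ for $i>n$ on $X$ and on any degeneration thereof. First I would treat the Hilbert function. For fixed $j\gg 0$ the line bundle $H+jL$ is ample on $X$ (relative ampleness of $H$ together with ampleness of $L$ on the base), so for $k\gg 0$ the higher cohomology of $k(H+jL)$ vanishes and $h(j,k)=\chi(X,k(H+jL))$. By Riemann--Roch, $\chi(X,k(H+jL))=\int_X e^{k(H+jL)}\,\mathrm{Td}(X)$, which is manifestly a polynomial in $k$ of degree $\dim X=n+m$ and jointly polynomial in $(j,k)$. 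Expanding $(H+jL)^p=\sum_i \binom{p}{i} j^i L^i H^{p-i}$ and using $L^i=0$ for $i>n$ shows that every coefficient is a polynomial of degree at most $n$ in $j$; the monomial $j^n k^{n+m}$ occurs with coefficient a positive multiple of $L^n.H^m=(L_B^n)(H_b^m)>0$, the product of the degree of the base and the degree of the fibre, so the degree in $j$ is exactly $n$. This gives the first assertion.

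For the weight function I would argue by equivariant Riemann--Roch, organised so as to separate the base direction from the fibre-and-weight directions. For $k\gg 0$ the sheaf $\pi_*\H^k$ on $B\times\C$ is flat over $\C$ and $\C^*$-equivariant; restricting to the central fibre gives a $\C^*$-equivariant coherent sheaf $W_k$ on $B$ with $H^0(\X_0,k(\H_0+jL))=H^0(B,W_k\otimes L^{jk})$, the $\C^*$-action on $L^{jk}$ being trivial. For $j\gg 0$ the higher cohomology on $B$ vanishes, so the total weight $w(j,k)$ equals the total weight of the equivariant Euler characteristic $\chi(B,W_k\otimes L^{jk})$. Decomposing $W_k=\bigoplus_\lambda W_{k,\lambda}$ by weight, this is $\sum_\lambda \lambda\cdot\dim H^0(B,W_{k,\lambda}\otimes L^{jk})$, where each summand is the Hilbert polynomial on $B$ of a coherent sheaf twisted by $L^{jk}$, hence a polynomial of degree $n$ in $j$. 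Equivalently, by equivariant Riemann--Roch on the $n$-dimensional $B$ the weight is an integral of the weighted Chern character of $W_k$ against $e^{jk\,c_1(L)}\,\mathrm{Td}(B)$, and Grothendieck--Riemann--Roch for $\X\to B\times\C$ shows the equivariant Chern character of $\pi_*\H^k$, hence of $W_k$, is polynomial in $k$, so $w(j,k)$ is jointly polynomial in $(j,k)$. The factor $e^{jk\,c_1(L)}$ integrated over $B$ again caps the degree in $j$ at $n$ via $c_1(L)^{n+1}=0$, while the degree in $k$ is one higher than for $h$, namely $n+m+1$; this matches the leading term $\tfrac{k^{n+m+1}}{(n+m+1)!}(\bar\H+jL)^{n+m+1}$ on the $(n+m+1)$-dimensional compactification $\bar\X$ over $\pr^1$ (cf. Proposition \ref{prop:DF}).

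The main point to check carefully --- and the only real obstacle --- is not the leading behaviour but the uniform claim that \emph{every} coefficient in the $k$-expansion of $w(j,k)$ is a polynomial of degree at most $n$ in $j$, valid throughout the range $k\gg j\gg 0$ where all the cohomology vanishings used above hold simultaneously. The two-stage formulation above handles this cleanly: twisting by $L^{jk}$ and performing Riemann--Roch on the base $B$ of dimension $n$ forces the $j$-degree bound on all coefficients at once, since $c_1(L)$ enters only through $e^{jk\,c_1(L)}$. The remaining care is in justifying the equivariant Riemann--Roch on the possibly singular central fibre $\X_0$ --- equivalently, working with the equivariant coherent sheaf $W_k$ on the smooth base $B$ --- and in confirming joint polynomiality in $(j,k)$ via Grothendieck--Riemann--Roch applied to $\pi$.
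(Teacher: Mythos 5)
Your treatment of $h(j,k)$ is exactly the paper's: asymptotic Riemann--Roch on $X$ together with $L^{i}=0$ for $i>n$, with the leading coefficient $\binom{n+m}{n}j^{n}L^{n}.H^{m}>0$ pinning the $j$-degree at exactly $n$. For $w(j,k)$ you take a genuinely different primary route. The paper simply invokes the standard identity expressing the total weight as an Euler characteristic on the compactification $\overline{\X}\to\pr^{1}$ of the total space (citing Donaldson, Odaka and Wang), after which one application of Riemann--Roch for schemes on the $(n+m+1)$-dimensional $\overline{\X}$, combined with $L^{n+1}=0$ there because $L$ is pulled back from $B$, gives joint polynomiality, degree $n+m+1$ in $k$ and degree $n$ in $j$ in one stroke. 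You instead push forward to the base: writing $H^{0}(\X_{0},k(\H_{0}+jL))=H^{0}(B,W_{k}\otimes L^{jk})$ with $W_{k}=\pi_{0*}(\H_{0}^{k})$ and decomposing by weight, each summand is the Hilbert polynomial of a coherent sheaf on the $n$-dimensional $B$, which makes the bound ``degree at most $n$ in $j$ for every coefficient'' completely transparent; that is a genuine advantage of your formulation. The trade-off is that joint polynomiality in $(j,k)$ and the $k$-degree now rest on the weighted Chern character of $W_{k}$ being polynomial in $k$, which you justify by an equivariant Grothendieck--Riemann--Roch for $\X\to B\times\C$; this is true but is essentially equivalent to the compactification identity the paper cites, and for a possibly singular, non-reduced central fibre it is not easier to establish directly. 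Since you also record the compactification formula as the source of the $k^{n+m+1}$ leading term, your argument closes, but a self-contained write-up should lean on the compactification (or the cited references) for polynomiality rather than on equivariant GRR. Two small points of hygiene: the identification $H^{0}(\X_{0},\mathcal{F})=H^{0}(B,\pi_{0*}\mathcal{F})$ needs no flatness over $\C$ or base change, so that claim can be dropped; and the cohomology vanishing on $B$ should be arranged for $k\gg j\gg 0$ via Leray and relative Serre vanishing on $\X_{0}$, since the sheaf $W_{k}$ itself varies with $k$.
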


\begin{proof} Both statements follow from basic intersection theory. For $h(j,k)$, by Riemann-Roch we have $$a_0 (j) = (H+jL)^{n+m} = {n+m \choose n} j^n L^n . H^m + {n+m \choose n-1} j^{n-1} L^{n-1} . H^{m+1} + \hdots,$$ a polynomial of degree $n$ in $j$, and similarly for the lower order terms.

The weight polynomial is given as an Euler characteristic on the total space of the compactification $(\X,\H+jL)$ as $$w(j,k) = \chi(\X,\H+jL) - h^0(X,H+jL),$$ which is again clearly a polynomial in $j$ and $k$ \cite[p. 315]{donaldson-toric} \cite{odaka2013,xiaoweiwang12}. By the same reasoning as above, since $L^{n+1}=0$ as $\dim B =n$, Riemann-Roch for schemes \cite[p. 354]{fulton-intersection} implies that that this is a polynomial of degree $n+m+1$ in $k$ and degree $n$ in $j$.\end{proof}

It follows that the Donaldson-Futaki invariant $$ \DF (\X, \H + j L) = \frac{ b_0 (j) a_1 (j) - b_1 (j) a_0 (j) }{a_0 (j) } $$ is a quotient of a polynomial in $j$ of degree $2n$ by a polynomial of degree $n$, hence admits an expansion $$\DF(\X,jL+\H) = j^n \W_0 ( \X,\H) + j^{n-1} \W_1 (\X,\H) + O(j^{n-2}).$$ 

\begin{definition}\label{defn:fibstab} Let $(X,H) \to (B,L)$ be a fibration such that each fibre $(X_b,H_b)$ is K-polystable. We say that the fibration is \emph{semistable} if $ \W_0 ( \X,\H) \geq 0$, and when equality holds $ \W_1 (\X,\H) \geq 0$.

\end{definition}

To define stability and polystability requires the notion of the norm of a fibration degeneration; the most natural is an analogue of the minimum norm of a test configuration. From its definition, it is clear that the minimum norm admits an expansion $$\|(\X,jL+\H)\|_m = j^n {n+m \choose n}\left(\frac{L^n.\H^m}{m+1} + L^n.\H^m.(\H - H)\right) + O(j^{n-1}).$$ 

\begin{definition} We define the \emph{minimum norm} of the fibration to be $$\|(\X,\H)\|_m = \frac{L^n.\H^m}{m+1} + L^n.\H^m.(\H - H).$$\end{definition}

We can now define stability and polystability of fibrations. In the definitions that follow, we assume the fibration is semistable. 

\begin{definition}\label{defn:fibstab} Let $(X,H) \to (B,L)$ be a semistable fibration. We say that the fibration is \begin{enumerate}[(i)]
\item \emph{stable} if when $ \W_0 ( \X,\H) = 0$ and $ \W_1 (\X,\H)=0$, then the minimum norm $\|(\X,\H)\|_m$ vanishes;
\item \emph{polystable} if when $ \W_0 ( \X,\H) = 0$ and $\W_1 (\X,\H)=0$, then either  the normalisation of $(\X,jL+\H)$ is a product test configuration for $j \gg 0$, or  the minimum norm $\|(\X,\H)\|_m$ vanishes.
\end{enumerate}
\end{definition}

\begin{remark} To make sense of the definitions, it is clear that one need only assume that the \emph{general} fibre of the fibration is K-polystable. Thus, together with Remark \ref{general-fibs}, one can define stability of general fibrations. Similarly, the definition makes sense when the general fibres are simply K-semistable. In fact, we expect that if $ \W_0 ( \X,\H) \geq 0$ for all fibration degenerations, this implies that the general fibre is K-semistable.
\end{remark}

\begin{remark}As explained to us by M. Hattori, it is not true that any fibration degeneration with vanishing minimum norm induces a trivial test configuration; an example and further study of these phenomena will appear in his forthcoming work. An alternative approach to defining stability of fibrations, which also seems reasonable, would be to restrict to fibration degenerations induced by filtrations of $\pi_*(H^k)$ by \emph{torsion-free} subsheaves.
 \end{remark}

The terms $\W_0 ( \X,\H )$ and $\W_1 ( \X,\H )$ can explicitly computed from expansions of the $a_i (j)$ and $b_i (j)$. It will be more enlightening to focus on the case when $\X$ is normal, so that we can use the intersection theoretic formula of Proposition \ref{prop:DF} to calculate the above terms. The Donaldson-Futaki invariant is given as
\begin{align*} \DF(\X,jL+\H) =&  \frac{n+m}{n+m+1}\frac{-K_X.(jL+H)^{n+m-1}}{(jL+H)^{n+m}}(jL+\H)^{n+m+1} \\
& + (jL+\H)^{n+m}.K_{\X/\pr^1}. \end{align*}

Thus expanding in $j$, we see that $$\W_0 (\X,\H) =  {n+m \choose n} \left( \frac{m}{m+1} \left(\frac{-K_{X/B}.L^{n}.H^{m-1}}{L^n.H^m}L^n.\H^{m+1}\right) + L^n.\H^m.K_{\X/B\times \pr^1}\right)$$
and $$\W_1 (\X,\H) = {n+m \choose n-1} \left( C_1(\X,\H) + C_2(\X,\H)+ C_3(\X,\H)+ C_4(\X,\H) \right)$$ where 
\begin{align*}  C_1(\X,\H) &= \frac{m}{m+2}\left( \frac{  -K_{X/B}.L^n.H^{m-1}}{ L^n.H^m}\right) L^{n-1}.\H^{m+2}, \\ 
 C_2(\X,\H) &= -\frac{m}{m+1}\left(\frac{(-K_{X/B}.L^n.H^{m-1})(L^{n-1}.H^{m+1})}{ (L^n.H^m)^2}\right)L^n.\H^{m+1} , \\ 
 C_3(\X,\H) &= \left( \frac{-K_X.L^{n-1}.H^m}{L^n.H^m}\right)L^n.\H^{m+1}, \\  
C_4(\X,\H) &=   L^{n-1}.\H^{m+1}.K_{\X/\pr^1}. 
\end{align*}

\begin{example}[Slope stability] Ross-Thomas's notion of slope stability often leads to explicit constructions of unstable varieties \cite{ross-thomas-obstruction}. As mentioned in Example \ref{ex:stoppa-tenni}, Stoppa-Tenni have previously used slope stability to construct a special case of fibration degenerations \cite{stoppa-tenni}. They calculate the associated Donaldson-Futaki invariant, but in a different limit than the one of interest for stability of fibrations. Their work makes it clear that slope stability could, in principle, be used to calculate the weights $\W_0(\X,\H)$ and $\W_1(\X,\H)$ for certain classes of fibration degenerations; we leave this for future work.
\end{example}

\begin{remark}[Fano fibrations] When $H=-K_{X/B}$ is the relative anti-canonical class, so that $\pi: (X,-K_{X/B}) \to (B,L)$ is a Fano fibration, one calculates that $$\W_0 (\X,\H) =  {n+m \choose n} \left( \frac{m}{m+1} L^n.\H^{m+1} + L^n.\H^m.K_{\X/B\times \pr^1}\right),$$ while $$ {n+m \choose n-1}^{-1}\W_1(\X,\H) = \frac{m}{m+2}L^{n-1}.\H^{m+2} + \frac{1}{m+1}\gamma L^n.\H^{m+1} + L^{n-1}.\H^{m+1}.K_{\X/B\times\pr^1},$$ where $$\gamma = \frac{L^{n-1}.(-K_{X/B})^{m+1}}{L^n.(-K_{X/B})^m}.$$ Note that $-\gamma$ is the degree of the CM line bundle on $B$ induced from the family of K-polystable varieties $\pi: (X,-K_{X/B}) \to (B,L)$, so $\gamma\leq  0$ is non-negative, and vanishes when all fibres are isomorphic (for example, for projective bundles) \cite[Theorem 1.2]{CP}. The formulae simplify further when $\H=-K_{\X/B\times\pr^1}$, which is roughly analogous to special degenerations in the study of Fano varieties. \end{remark} 

The following Lemma implies that it is really condition on $\W_1(\X,\H)$ in Definition \ref{defn:fibstab} that is crucial, as the fibres are K-polystable. 

\begin{lemma}\label{W0term} Let $(\X, \H) \to B \times \C$ be a fibration degeneration for $(X, H) \to (B, L)$. Then $$\W_0 (\X, \H) = {n+m \choose n} L^n  \cdot \DF (\X_b, \H_b) ,$$ a constant multiple of the Donaldson-Futaki invariant of the test configuration $(\X_b, \H_b) \to \C$ for $(X_b, H_b)$, for generic $b$. 
\end{lemma}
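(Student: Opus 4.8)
The plan is to reduce the claim to an identity between the leading term $\W_0(\X,\H)$ of the Donaldson--Futaki expansion and the Donaldson--Futaki invariant of the fibrewise test configuration $(\X_b,\H_b)\to\C$ obtained by restricting the fibration degeneration to a generic fibre. Recall from the intersection-theoretic formula of Proposition \ref{prop:DF} that for a generic fibre $(\X_b,\H_b)$, a test configuration for the $m$-dimensional variety $(X_b,H_b)$, one has
\begin{equation*}
\DF(\X_b,\H_b) = \frac{m}{m+1}\,\mu(X_b,H_b)\,\H_b^{m+1} + K_{\X_b/\pr^1}.\H_b^{m},
\end{equation*}
where $\mu(X_b,H_b) = \tfrac{-K_{X_b}.H_b^{m-1}}{H_b^m}$. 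The strategy is to show that the two summands appearing in the displayed formula for $\W_0(\X,\H)$ are precisely $L^n$ times these two terms.

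The main computation I would carry out is to interpret each intersection number on $\X$ (which lives over $B\times\pr^1$) as a product of a contribution from the base $B$ and a fibrewise contribution. First I would observe that $L^{n+1}=0$ since $\dim B = n$, so in all the $(n+m+\star)$-dimensional intersection numbers on $\X$ the power of $L$ is capped at $n$; extracting the $j^n$ coefficient forces exactly $n$ powers of $L$, which geometrically picks out the fibre over a generic point $b\in B$ and multiplies by the degree $L^n$. Concretely I would argue that $L^n.\H^{m+1}$, computed on $\X$, equals $L^n$ times the self-intersection $\H_b^{m+1}$ on the fibrewise test configuration $\X_b$, and similarly that $L^n.\H^m.K_{\X/B\times\pr^1}$ equals $L^n \cdot (K_{\X_b/\pr^1}.\H_b^{m})$, using that the relative canonical $K_{\X/B\times\pr^1}$ restricts to $K_{\X_b/\pr^1}$ on a generic fibre. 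The term $-K_{X/B}.L^n.H^{m-1}/(L^n.H^m)$ in $\W_0$ should likewise simplify: restricting $-K_{X/B}$ and $H$ to $X_b$ gives $\tfrac{-K_{X_b}.H_b^{m-1}}{H_b^m}L^n = \mu(X_b,H_b)L^n$, so the $L^n$ factors combine to leave exactly the slope of the fibre.

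The key steps in order are therefore: (1) use $\dim B = n$ to argue that the $j^n$ coefficient isolates intersections with exactly $n$ copies of $L$; (2) apply a restriction-to-generic-fibre argument (via e.g. the projection formula and flatness) to rewrite each such intersection number on $\X$ as $L^n$ times the corresponding fibrewise intersection number on $\X_b$; (3) verify that $K_{\X/B\times\pr^1}$ restricts to $K_{\X_b/\pr^1}$ on a generic fibre, so that the canonical contributions match; and (4) collect terms and recognise the resulting expression, after dividing by $\binom{n+m}{n}^{-1}$, as $L^n \cdot \DF(\X_b,\H_b)$ via the formula of Proposition \ref{prop:DF} applied with $n$ replaced by the fibre dimension $m$.

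The main obstacle I anticipate is step (2)--(3): making the ``restriction to a generic fibre'' rigorous at the level of intersection numbers, since $\X$ is a degeneration whose central fibre need not be nice and $\X\to B\times\pr^1$ need not be a submersion in families. One must be careful that the generic fibre $\X_b$ is indeed a normal test configuration for $(X_b,H_b)$ so that Proposition \ref{prop:DF} applies to it, and that the relative canonical classes are compatible with restriction; the flatness of the fibration degeneration and generic smoothness in characteristic zero should ensure this for generic $b$, but the bookkeeping identifying $K_{\X/B\times\pr^1}|_{\X_b}$ with $K_{\X_b/\pr^1}$ requires the adjunction-type relation $K_{\X/B\times\pr^1}=K_{\X/\pr^1}-\pi^*K_{B}$ together with the fact that over a generic point of $B$ the base contribution $K_B$ drops out after the $L^n$ extraction.
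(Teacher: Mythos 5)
Your proposal is correct and follows essentially the same route as the paper: the paper also restricts to the open dense locus of $B$ where the degeneration is flat (via flattening stratification), uses the Chow-ring identity $\pi^*[b]=[\pi^{-1}(b)]$ to convert each intersection number $L^n.\H^{m+1}$, $L^n.\H^m.K_{\X/B\times\pr^1}$, etc.\ into $L^n$ times the corresponding fibrewise intersection number on $\X_b$, and then recognises the result as $\binom{n+m}{n}L^n\cdot\DF(\X_b,\H_b)$ via Proposition \ref{prop:DF}. The technical points you flag (normality of the generic fibre and the restriction of the relative canonical) are handled only implicitly in the paper's ``proceeding similarly with the other terms,'' so your caution there is reasonable but not a divergence in method.
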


\begin{proof} Let $B^0$ be the open dense subset over which the degeneration is flat, produced by flattening stratification \cite[Theorem 1.6]{kollar-rational}. In the Chow ring, $\pi^* [b]$ equals $[\pi^{-1} (b) ]$ for any $b \in B^0,$ by flatness \cite[Theorem 1.25]{3264}. Thus we have $$ \pi^* [b] .\H^{m+1} = \H_b^{m+1}$$ for such a point, and the intersection number $ L^n.\H^{m+1}$ therfore equals the product $ L^n \cdot \H_b^{m+1}$. We emphasise that the notation $ L^n.\H^{m+1}$ is an intersection number on $\X$, while $ L^n \cdot \H_b^{m+1}$ is a product of the intersection numbers $L^n$ and $\H_b^{m+1}$ on $B$ and $\X_b$, respectively. Proceeding similarly with the other terms in the expression for $\W_0 (\X, \H)$, we obtain
\begin{align*} \W_0 (\X, \H) &=  {n+m \choose n} \left( \frac{m}{m+1} \left(\frac{ L^n \cdot \left( -K_{X_b}.H_b^{m-1} \right) }{L^n \cdot H_b^m  } L^n \cdot \H_b^{m+1} \right) + L^n \cdot \H_b^m.K_{\X_b/ \pr^1}\right) \\
&=  {n+m \choose n}  \left(\frac{m}{m+1}  \left(   \mu (X_b, H_b) \cdot L^n \cdot \H_b^{m+1} \right) + L^n \cdot \H_b^m . K_{\X_b/ \pr^1} \right) \\
&= {n+m \choose n}  L^n \cdot \DF (\X_b, \H_b) ,
\end{align*}
as required.
\end{proof}

In particular, since the fibres are K-polystable, we always have $\W_0 (\X, \H) \geq 0$.  Turning to the norm, the same proof furnishes the following.

\begin{lemma} The minimum norm $\|(\X,\H)\|_m$ equals the minimum norm of $(\X_b, \H_b)$ for general $b \in B$.\end{lemma}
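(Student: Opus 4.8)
The plan is to run the argument of Lemma \ref{W0term} essentially verbatim, now applied to the intersection numbers out of which the minimum norm is assembled rather than those giving $\W_0$. Passing, as in Definition \ref{def:min-norm}, to a resolution so that $\X$ maps to $X \times \pr^1$, the minimum norm of the fibration degeneration is built from the two intersection numbers $L^n.\H^{m+1}$ and $L^n.\H^m.(\H - H)$ on the total space $\X$, which has dimension $n+m+1$; meanwhile the fibre $\X_b$ of $\X \to B$ over a general $b \in B$ is itself the $\pr^1$-compactification of a test configuration for $(X_b,H_b)$, of dimension $m+1$, polarised by $\H_b = \H|_{\X_b}$. The goal is to show that each of these intersection numbers factors as $(L^n)$ times the corresponding intersection number on $\X_b$, so that $\|(\X,\H)\|_m = (L^n)\,\|(\X_b,\H_b)\|_m$. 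Since $L$ is ample we have $(L^n) > 0$, so the two norms agree up to this positive scalar, and in particular vanish simultaneously, which is exactly what is used in the stability notions of Definition \ref{defn:fibstab}.

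First I would pass to the open dense subset $B^0 \subseteq B$ over which the degeneration is flat, obtained from flattening stratification \cite[Theorem 1.6]{kollar-rational} just as in Lemma \ref{W0term}, and fix a general point $b \in B^0$. By flatness, $\pi^*[b] = [\X_b]$ in the Chow ring \cite[Theorem 1.25]{3264}. Since $L$ is pulled back from $B$ and $\dim B = n$, the class $L^n$ is $(L^n)$ times the class of a general point of $B$, so that $\pi^*(L^n) = (L^n)\,[\X_b]$ for generic $b$. Intersecting against powers of $\H$ and against $\H-H$ then gives
\begin{align*}
L^n.\H^{m+1} &= (L^n)\,\H_b^{m+1}, \\
L^n.\H^m.(\H - H) &= (L^n)\,\H_b^m.(\H_b - H_b),
\end{align*}
where the right-hand sides are intersection numbers computed on $\X_b$; here one uses that both $\H$ and the pullback $H$ restrict fibrewise, so that $(\H - H)|_{\X_b} = \H_b - H_b$. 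Substituting these identities into the minimum norm $\|(\X,\H)\|_m$ and comparing with $\|(\X_b,\H_b)\|_m = \frac{1}{m+1}\H_b^{m+1} - \H_b^m.(\H_b - H_b)$ from Definition \ref{def:min-norm} yields $\|(\X,\H)\|_m = (L^n)\,\|(\X_b,\H_b)\|_m$, matching the pattern of Lemma \ref{W0term}.

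No genuinely new idea is required beyond the computation of $\W_0(\X,\H)$, and the only points needing care are bookkeeping ones. The hard part will be checking that for generic $b$ the fibre $\X_b$ really is a test configuration for $(X_b,H_b)$ in the sense used to define its minimum norm: one must verify that the $\C^*$-action and the $\pr^1$-compactification restrict compatibly to $\X_b$, and that the resolution used so that $\X \to X \times \pr^1$ is a morphism restricts to such a resolution on the general fibre. Both hold after possibly shrinking $B^0$, since all the relevant morphisms and the flat locus are constructible and their generic behaviour is the required one. I expect this compatibility to be the only real obstacle; once it is in place, the factorisation of the intersection numbers through the general fibre is automatic from flatness, exactly as in the proof of Lemma \ref{W0term}.
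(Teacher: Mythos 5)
Your proposal is correct and is essentially the paper's own argument: the paper simply remarks that ``the same proof'' as for $\W_0(\X,\H)$ (flattening stratification, $\pi^*[b]=[\X_b]$ on the flat locus, and factoring each intersection number through the general fibre) gives the norm statement. Your explicit bookkeeping, including the positive factor $(L^n)$ relating the two norms (which is immaterial for the vanishing statements used in Definition \ref{defn:fibstab}), matches what the paper intends.
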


By the definition of K-polystability, the condition $\W_0 (\X, \H)=0$ states that the fibration degeneration induces a test configuration for a general fibre which normalises to a product. This observation leads to the following algebraic analogue of the fact that a fibration without fibration automorphisms admits an optimal symplectic connection, namely one from picking $\omega_X$ so that it restricts to the unique cscK metric on each fibre. 

\begin{corollary}\label{cor:noautstable}

Suppose $\Aut (X_b, H_b) =0$ for all $b$, so that each fibre $(X_b,H_b)$ is K-stable. Then $\pi : (X, H) \to (B,L)$ is a stable fibration.

\end{corollary}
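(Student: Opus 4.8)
The plan is to verify both semistability and stability condition (i) of Definition \ref{defn:fibstab} directly, by showing that the hypothesis forces every fibration degeneration with $\W_0(\X,\H)=0$ to reduce to a trivial test configuration. The key algebraic inputs are Lemma \ref{W0term} and the subsequent Lemma identifying the minimum norm of the fibration with that of a general fibre, together with the fact from \cite{BHJ,twisted} that a test configuration has vanishing minimum norm if and only if it normalises to the trivial one.

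First I would upgrade the hypothesis to genuine K-stability of the fibres. Each $(X_b,H_b)$ is K-polystable by assumption, so $\DF(\Y,\L)=0$ can occur only for test configurations normalising to a product $(X_b\times\C, H_b^r)$ equipped with a $\C^*$-action on $X_b$; such an action is a one-parameter subgroup of $\Aut(X_b,H_b)=0$, hence trivial. Thus each fibre is K-stable, and $\DF(\Y,\L)=0$ holds if and only if $(\Y,\L)$ normalises to the \emph{trivial} test configuration.

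Now fix any fibration degeneration $(\X,\H)$. By Lemma \ref{W0term}, $\W_0(\X,\H)=\binom{n+m}{n}L^n\cdot\DF(\X_b,\H_b)$ for general $b$, which is non-negative since the fibres are K-semistable, so $\W_0\geq 0$ always. Suppose $\W_0(\X,\H)=0$, so $\DF(\X_b,\H_b)=0$. By K-stability the induced fibre test configuration normalises to the trivial one; since $X_b$ is smooth, a genuine degeneration would force $\DF>0$ while a non-trivial product is excluded by $\Aut(X_b,H_b)=0$, so the $\C^*$-action on $\pr(V_k|_b)$ must fix $X_b$ pointwise. As $X_b$ is embedded by the complete system $|H_b^k|$ it is linearly non-degenerate, and a projective-linear map fixing it pointwise is the identity, so the action is scalar on $V_k|_b$. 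Its weight is integer-valued and constant on the irreducible dense locus $B^0$, so over $B^0$ the $\C^*$-action on the degeneration $\E$ of $V_k$ is the central subgroup $t\mapsto t^w\,\mathrm{Id}$ and fixes $X$ pointwise there. Passing to closures, $\X=\overline{\C^*\cdot X}$ normalises to the trivial test configuration $X\times\C$ for $(X,jL+H)$, whence $\DF(\X,jL+\H)=0$ for all $j$; in particular $\W_1(\X,\H)=0$, and by the norm Lemma together with the norm-zero characterisation also $\|(\X,\H)\|_m=0$. Hence $\W_0\geq 0$ with $\W_0=0\Rightarrow\W_1=0\geq 0$ gives semistability, and $\W_0=\W_1=0\Rightarrow\|(\X,\H)\|_m=0$ gives stability.

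The hard part is the passage, in the previous paragraph, from triviality of the \emph{generic} fibre test configuration to triviality of the \emph{entire} fibration degeneration. Since $\W_1(\X,\H)$ is a global intersection number it can a priori detect the behaviour over $B\setminus B^0$, so knowing $\|(\X,\H)\|_m=0$ alone is insufficient; indeed, as M. Hattori's remark shows, vanishing minimum norm need not yield a trivial test configuration. What rescues the argument is the strictness of K-stability in the absence of automorphisms: $\DF(\X_b,\H_b)=0$ forces the generic fibre to be genuinely fixed rather than merely preserved, which collapses the fibre data to a single central scalar one-parameter subgroup and thereby rules out any nonzero contribution to $\W_1$. The technical points requiring care are the normalisation of $\X$ and the extension of the scalar action across $B\setminus B^0$, but it is precisely this scalar-weight mechanism that makes global triviality, and not just norm vanishing, available here.
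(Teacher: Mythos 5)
There is a genuine gap at the pivotal step of your argument: from ``$\DF(\X_b,\H_b)=0$, hence the fibre test configuration normalises to the trivial one'' you conclude that the $\C^*$-action on $\pr(V_k|_b)$ must fix $X_b$ pointwise, hence acts as a scalar on $V_k|_b$. This inference is false. A one-parameter subgroup can induce a test configuration that is non-normal but has trivial normalisation (the ``almost trivial'' test configurations of Li--Xu and Stoppa); such subgroups are in general far from scalar, and their existence is precisely why K-stability is phrased in terms of normalisations and why a norm enters the definitions at all. Triviality of the normalisation gives you only $\|(\X_b,\H_b)\|_m=0$; it does not give you that $\lambda$ preserves $X_b$ setwise. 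Your dichotomy (``a genuine degeneration would force $\DF>0$, while a non-trivial product is excluded by $\Aut(X_b,H_b)=0$'') covers the cases where the central fibre is either a genuine degeneration or a product acted on through automorphisms, but misses the almost-trivial case entirely. Consequently the scalar-weight mechanism, the extension over $B\setminus B^0$, and the conclusions $\W_1(\X,\H)=0$ and global triviality of $(\X,\H)$ are all unestablished. (You are right that something must be said about $\W_1$ to verify semistability, and your attempt to prove $\W_1=0$ outright is aimed at that; but this route does not deliver it.)

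The paper's proof is deliberately weaker and avoids the problem: it never claims the fibration degeneration is trivial, nor that $\W_1$ vanishes. It uses only that K-stability of the fibres gives $\DF(\X_b,\H_b)>0$ whenever $\|(\X_b,\H_b)\|_m>0$; since by Lemma \ref{W0term} the quantity $\W_0(\X,\H)$ is a positive multiple of $\DF(\X_b,\H_b)$ for general $b$, the vanishing $\W_0(\X,\H)=0$ forces $\|(\X_b,\H_b)\|_m=0$, and the subsequent lemma identifying $\|(\X,\H)\|_m$ with the minimum norm of the general fibre then gives $\|(\X,\H)\|_m=0$, which is exactly the clause required for stability. If you want to keep your stronger conclusion, you need an independent argument ruling out non-scalar one-parameter subgroups with almost-trivial fibrewise limits, or a direct computation showing $\W_0=0$ implies $\W_1\geq 0$; fixing $X_b$ pointwise is not available as stated.
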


\begin{proof} Indeed, since the fibres are K-stable, we have $\DF(\X_b,\H_b) > 0$ for general $b \in B$ provided $\|(\X_b,\H_b)\|_m >0$, again for $b\in B$ general. But by the above, $\W_0(\X,\H)$ is precisely the Donaldson-Futaki invariant of the general fibre, while $\|(\X,\H)\|_m$ agrees with the minimum norm of the general fibre. The proof follows. \end{proof}

%The proof requires some results on the norm of fibration degenerations that will be established as in Section \ref{sec:C0norm}, so we refer the reader to Proposition \ref{prop:proof} in that Section for the proof.  There we will also discuss various norms on the set of fibration degenerations, and in particular in Corollary \ref{cor:c0norm}, we will show that a fibration degeneration has norm zero (with respect to any norm) if and only if its normalisation is the trivial fibration degeneration.

We end by discussing the link between semistability of projective bundles, viewed as fibrations, and slope semistability of the underlying vector bundle. 

\begin{theorem}\label{HEvsosc} Suppose $E \to (B,L)$ is a vector bundle over a smooth polarised variety. Then $\pr(E) \to (B,L)$ is a semistable fibration if and only $E \to (B,L)$ is a slope semistable vector bundle. \end{theorem}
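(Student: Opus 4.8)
The plan is to prove the two implications separately: the forward direction by restricting attention to the subsheaf degenerations of Example~\ref{ex:stoppa-tenni}, and the harder converse by the analytic lower bound of Theorem~\ref{intro-lower-bound}.

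For the ``only if'' direction, suppose $\pr(E) \to (B,L)$ is a semistable fibration, where $H = \scO(1)$ and each fibre is $(\pr^m, \scO(1))$, which is K-polystable. Given any coherent subsheaf $F \subset E$, deformation to the normal cone of $\pr(F) \subset \pr(E)$, as in Example~\ref{ex:stoppa-tenni}, produces a fibration degeneration $(\X,\H)$. By the computation of Ross--Thomas \cite[Theorem 5.13]{ross-thomas-obstruction} recalled in the introduction, one has $\W_0(\X,\H) = 0$ and $\W_1(\X,\H) = c(\mu(E) - \mu(F))$ with $c > 0$. Semistability of the fibration forces $\W_1(\X,\H) \geq 0$, hence $\mu(F) \leq \mu(E)$; as $F$ was arbitrary, $E$ is slope semistable. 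This direction is routine once these intersection numbers are identified with slopes.

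The substance is the converse, where one must bound $\W_0$ and $\W_1$ for \emph{all} fibration degenerations, not only those arising from subsheaves, and this is where the analytic input enters. By Lemma~\ref{W0term} we have $\W_0(\X,\H) \geq 0$ for every fibration degeneration, since the fibres $\pr^m$ are K-polystable; so it remains to show $\W_0(\X,\H) = 0$ implies $\W_1(\X,\H) \geq 0$. When $E$ is slope semistable it admits a sequence of approximate Hermite--Einstein metrics $h_i$, with $\Lambda_{\omega_B} F_{h_i} \to \lambda \Id$, by the standard characterisation of semistability. Through the fibrewise Fubini--Study construction these induce fibrewise cscK metrics $\omega_{X,i} \in c_1(\scO(1))$ on $\pr(E)$, and the Hermite--Einstein/optimal symplectic connection dictionary of \cite[Proposition 3.19]{morefibrations}, made quantitative, should show that they are \emph{approximately} optimal, namely $\|p(\Lambda_{\omega_B}\rho_{\H} + \Delta_{\scV}\Lambda_{\omega_B}\mu^*(F_{\H}))\|_1 \to 0$. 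Hence $\inf_{\omega_X}\|p(\Lambda_{\omega_B}\rho_{\H} + \Delta_{\scV}\Lambda_{\omega_B}\mu^*(F_{\H}))\|_1 = 0$, and Theorem~\ref{intro-lower-bound} then gives, for all $j \gg 0$,
$$0 \geq -\inf_{(\X,\H)} \frac{j\W_0(\X,\H) + \W_1(\X,\H)}{\|(\X,\H)\|_{\infty}},$$
so that $j\W_0(\X,\H) + \W_1(\X,\H) \geq 0$ for every fibration degeneration. Specialising to a degeneration with $\W_0(\X,\H) = 0$ yields $\W_1(\X,\H) \geq 0$, which together with $\W_0 \geq 0$ is exactly semistability of $\pr(E)$.

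I expect the main obstacle to be this analytic step: making the correspondence of \cite[Proposition 3.19]{morefibrations} sufficiently quantitative that \emph{approximate} Hermite--Einstein metrics on $E$ produce metrics on $\pr(E)$ whose optimal symplectic connection operator is small in $L^1$. The key subtlety is that slope semistability, as opposed to polystability, yields only approximate and not genuine Hermite--Einstein metrics, so one cannot simply invoke existence of an optimal symplectic connection together with the main theorem, but must instead argue through the infimum and Theorem~\ref{intro-lower-bound}. Should the quantitative estimate prove delicate, an alternative is to pass to the associated graded $\mathrm{gr}(E)$ of a Jordan--H\"older filtration, which is slope polystable of the same slope and hence genuinely Hermite--Einstein, and then transfer semistability back along the degeneration $E \rightsquigarrow \mathrm{gr}(E)$ using that $\W_0$ and $\W_1$ depend only on the Chern data of $E$, which is preserved; this transfer, however, requires comparing the full families of degenerations of $\pr(E)$ and $\pr(\mathrm{gr}(E))$ and seems less clean.
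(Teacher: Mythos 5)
Your proposal matches the paper's proof in both directions: the forward implication via Ross--Thomas's computation of $\W_0$ and $\W_1$ for the subsheaf degenerations, and the converse via Kobayashi's approximate Hermite--Einstein metrics (which exist for any slope semistable bundle) fed into Theorem \ref{intro-lower-bound}. The quantitative step you flag as the main obstacle is in fact not needed, since \cite[Proposition 3.19]{morefibrations} gives the exact identity $p\left( \Lambda_{\omega_B}\rho_{\H} + \Delta_{\scV}\Lambda_{\omega_B}\mu^*({F_{\H}}) \right) = \mu^*(F_{h_t})$ for the fibrewise Fubini--Study metrics induced by the $h_t$, so $\|p(\eta)\|_1 \to 0$ follows immediately from $\|\Lambda_{\omega_B} F_{h_t} - \lambda \Id\|_{L^1} \to 0$.
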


\begin{proof} It is clear from work of Ross-Thomas that semistability of the fibration implies slope semistability of the bundle. Firstly, any subsheaf $F \subset E$ induces a fibration degeneration by projectivising the degeneration of $E$ to  $(E/F)\oplus F$ \cite[Section 5.4]{ross-thomas-obstruction}. Ross-Thomas calculate the Donaldson-Futaki invariant of the associated test configuration $(\pr(\E), jL+\scO(1))$, and it follows from their work that $\W_0(\X,\pr(\E),\scO(1)) = 0$ and the condition $\W_1(\X,\pr(\E),\scO(1)) \geq 0$ is equivalent to the condition $\mu(F) \leq \mu(E)$, with $\mu(F)$ denoting the usual slope of a vector bundle, which is simply the condition required by slope stability.

The reverse direction requires analytic techniques. The slope semistable vector bundle $E$ admits an ``almost Hermite-Einstein metric'' by \cite[Theorem 10.13]{kobayashi}, and in particular admits a sequence of hermitian metrics $h_t$ such that $$\|\Lambda_{\omega_B} F_{h_t} -  \lambda \Id\|_{L^1} \to 0,$$ with $F_{h_t}$ the curvature and $\lambda$ the appropriate topological constant. In the case of projective bundles, we have $$ p \left( \Lambda_{\omega_B}\rho_{\H} + \Delta_{\scV}\Lambda_{\omega_B}\mu^*({F_{\H}}) \right) = \mu^*(F_{h_t})$$ by \cite[Proposition 3.19]{morefibrations}. It then follows from Theorem \ref{intro-lower-bound} that the fibration is semistable, completing the proof of the result.\end{proof}

\begin{remark} We expect the above can be generalised to the analogous statements for stability and polystability, and moreover we expect that the smoothness hypotheses are unnecessary, as would presumably follow from a purely algebro-geometric proof. \end{remark}

\section{A Kempf-Ness Theorem}\label{sec:kempf-ness} The goal of this Section is to understand the finite-dimensional analogue of optimal symplectic connections and stability of fibrations. This will motivate both the definition of stability of a fibration, and the link with differential geometry.

\subsection{Compactifying the space of maps}

We begin with by explaining how fibration degenerations arise in algebraic geometry. Letting $\pi: (X,H) \to (B,L)$ be a flat fibration, the line bundle $kjL+kH$ is very ample for  $k \gg j \gg 0$, producing an embedding $X \hookrightarrow \pr^{N_{j,k}}$. In turn this produces an embedding $$B\hookrightarrow \Hilb(\pr^{N_{j,k}})$$ of $B$ into the Hilbert scheme of subschemes of projective space, via $b \to [X_b] \in \Hilb(\pr^{N_{j,k}})$, where $X_b$ is the fibre of $\pi$ over $b \in B$. Thus our fibration induces a point in the scheme $\Hom(B, \Hilb(\pr^N))$ of morphisms from $B$ to $\Hilb(\pr^{N_{j,k}})$. There is a natural $\GL(N_{j,k}+1)$-action on $\pr^N$, inducing an action on $\Hilb_{h(k)}(\pr^{N_{j,k}})$.  One thus obtains an action on $\Hom(B, \Hilb_{h(k)}(\pr^{N_{j,k}}))$ via right composition $$\phi \to g\circ\phi$$ for $g \in \GL(N_{j,k}+1)$. The space  $\Hom(B, \Hilb_{h(k)}(\pr^{N_{j,k}}))$ is, however, not proper, and the goal of this Section is to produce a natural compactification. Fibration degenerations will then arise from $\C^*$-subgroups of $\GL(N_{j,k}+1)$.

We will use, at several points, the extra structure of the embedding $X \hookrightarrow \pr^{N_{j,k}}$ proved in Section \ref{sec:metrics-stability}. Firstly, for $k \gg j \gg 0$, the pushforward $\pi_*(H^k)$ is a vector bundle by flatness and $X\hookrightarrow \pr(\pi_*(H^k)\otimes L^{jk})$ is an embedding by relative ampleness of $H$. This induces a sequence of embeddings by Corollary \ref{cor:embeddings1} $$X \hookrightarrow \pr(\pi_*(H^k)\otimes L^{jk}) \hookrightarrow \pr(H^0(X,kjL+kH)) \times B,$$ associated to the surjection of bundles over $B$ $$H^0(X,jkL+kH) \to \pi_*(H^k)\otimes L^{jk} \to 0.$$ By Lemma \ref{lemma:very-ampleness}, very ampleness of the $\scO(1)$ line bundle implies that there is also a natural embedding $$\pr(\pi_*(H^k)\otimes L^{jk}) \hookrightarrow \pr(H^0(X,kjL+kH)).$$ Similarly, by Lemma \ref{globalgen} there is a surjection of bundles over $B$. 

We will now consider an abstract subvariety of projective space satisfying similar conditions. Let $X \subset \pr^N$ be a smooth projective linearly normal variety, which admits the structure of a smooth (hence flat) fibration $\pi: X \to B$. Linear normality means that $H^0(X, \scO(1)) \cong H^0(\pr^N, \scO(1)).$ We assume that $\scO_{\pr^N}(1)$ is relatively very ample, so that $\pi_*\scO(1)$ is a vector bundle on $B$ by flatness, and we assume that $\scO_{\pr(\pi_*\scO(1))}(1)\to \pr(\pi_*\scO(1))$ is a very ample line bundle. This induces an embedding $$\phi: B \hookrightarrow \Hilb_{h(k)}(\pr^N),$$ if $h(k)$ is the Hilbert polynomial of $(X_b, \scO(k))$, as well as a surjection $$H^0(\pr^N, \scO(1)) \to \pi_*\scO(1) \to 0$$ of bundles over $B$ just as above. We will also denote by $l(k)$ the Hilbert polynomial of $X \subset (\pr^N,\scO(1))$.

Thus associated to $X$, we have a point $[X] \in \Hilb_{l(k)}(\pr^N)$, and a point $[\phi] \in =\Hom(B, \Hilb_{h(k)}(\pr^N)$. Both $\Hom(B, \Hilb_{h(k)}(\pr^N)$ and $\Hilb_{l(k)}(\pr^N)$ have natural $\GL(N+1)$-actions, induced from the action on $\pr^N$. Let $\Phi \subset \Hom(B, \Hilb_{h(k)}(\pr^N)$ denote the orbit of $[\phi] \in \Hom(B, \Hilb_{h(k)}(\pr^N)$; this is the space we wish to compactify. $\Phi$ has an associated fibration $(\X_{\Phi}, \scO_{\Phi}(1)) \to B \times \Phi$, constructed by pulling back the universal family $(\scZ,\scO_{\scZ}(1)) \to \Hilb_{h(k)}(\pr^N)$ over the Hilbert scheme via the evaluation morphism $$ev: \Hom(B, \Hilb_{h(k)}(\pr^N)) \times B \to \Hilb_{h(k)}(\pr^N).$$ Over each point of $\Phi$, this family is isomorphic to $X \to B$ by definition of $\Phi$.

\begin{lemma} There is a natural embedding $$\Phi \hookrightarrow \Hilb_{l(k)}(\pr^N).$$\end{lemma}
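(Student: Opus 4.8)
The plan is to realise the map on total spaces as a morphism to $\Hilb_{l(k)}(\pr^N)$ via the universal family, and then to identify the resulting map with the orbit map $\GL(N+1)\cdot[\phi]\to\GL(N+1)\cdot[X]$, reducing the embedding property to an equality of stabilisers.

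First I would construct the morphism. Pulling back the universal family $(\scZ,\scO_{\scZ}(1))$ along the evaluation morphism $ev$ produces the closed subscheme $\X_{\Phi}\subset \Phi\times B\times\pr^N$, flat over $\Phi\times B$, whose fibre over $(\phi',b)$ is the fibre $X'_b\subset\pr^N$ of the fibration corresponding to $\phi'$. Projecting away the $B$ factor, $\X_{\Phi}\to\Phi\times\pr^N$ has image a closed subscheme $\mathcal{W}$ (the projection is proper over $\Phi$), whose fibre over $\phi'$ is the total space $X'=\bigcup_b X'_b\subset\pr^N$. Since every point of $\Phi$ lies in the $\GL(N+1)$-orbit of $[\phi]$, this total space is projectively equivalent to $X$, so $\mathcal{W}\to\Phi$ is flat with constant Hilbert polynomial $l(k)$ (flatness over the reduced, irreducible orbit $\Phi$ follows from constancy of the Hilbert polynomial). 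By the universal property of the Hilbert scheme, $\mathcal{W}$ is classified by a morphism $\Psi\colon\Phi\to\Hilb_{l(k)}(\pr^N)$, which by construction is $\GL(N+1)$-equivariant and sends $[\phi]$ to $[X]$; its image is precisely the orbit $\GL(N+1)\cdot[X]$.

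It remains to show $\Psi$ is an embedding. The key observation is that a point $\phi'\in\Phi$ is equivalent to a fibration structure $\beta'\colon X'\to B$ on its total space: the subscheme $\X_{\phi'}\subset B\times\pr^N$ is the graph of $\beta'$ and projects isomorphically onto $X'$, so $\phi'$ is recovered from the pair $(X',\beta')$, and conversely. Thus $\Psi$ forgets the fibration structure, and $\Psi(\phi'_1)=\Psi(\phi'_2)$ if and only if the two total spaces agree, say $g_1\cdot X=g_2\cdot X$ for $\phi'_i=g_i\cdot[\phi]$; injectivity of $\Psi$ is therefore equivalent to the equality of stabilisers $\Stab([\phi])=\Stab([X])$ inside $\GL(N+1)$. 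The inclusion $\Stab([\phi])\subseteq\Stab([X])$ is immediate, since an element fixing every fibre $X_b$ fixes their union $X$.

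The main obstacle is the reverse inclusion: I must show that any $h\in\GL(N+1)$ with $h\cdot X=X$ preserves each fibre, i.e. $\pi\circ h=\pi$. Here I would use that the fibration is rigid under linear automorphisms of the embedded total space, concretely that the two fibration structures $\pi$ and $\pi\circ h^{-1}$ on $X$ induce the same point of $\Hom(B,\Hilb_{h(k)}(\pr^N))$. This is where linear normality and the fact that $\phi$ is itself an embedding of $B$ enter: they pin down the fibres $X_b$ as the subschemes parametrised by $\phi$, so that $h$ cannot permute them nontrivially while fixing $X$. Granting this, $\Psi$ is an injective $\GL(N+1)$-equivariant morphism onto the orbit $\GL(N+1)\cdot[X]$; since we work in characteristic zero and both $\Phi$ and the orbit are smooth homogeneous spaces with now-equal stabilisers, $\Psi$ is an isomorphism onto the locally closed orbit, hence the desired embedding.
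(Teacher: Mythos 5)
Your construction of the classifying morphism $\Psi\colon\Phi\to\Hilb_{l(k)}(\pr^N)$ matches the paper's: both pull back the universal family, realise the resulting total spaces as subschemes of $\pr^N$, and invoke the universal property of the Hilbert scheme. The divergence, and the gap, is in the embedding step. You correctly observe that injectivity of $\Psi$ on the orbit is equivalent to the equality $\Stab([\phi])=\Stab([X])$ inside $\GL(N+1)$, but the non-trivial inclusion $\Stab([X])\subseteq\Stab([\phi])$ is precisely where you write ``Granting this'': nothing in your argument establishes it, and the appeal to ``rigidity of the fibration under linear automorphisms'' is an assertion, not a proof. Worse, the statement you need fails under the stated hypotheses. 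Take $X=\pr^1\times\pr^1$ Segre-embedded in $\pr^3$ with $\pi$ the first projection; this is linearly normal, $\scO_{\pr^3}(1)$ is relatively very ample, and $\pr(\pi_*\scO(1))\cong\pr^1\times\pr^1$ carries a very ample $\scO(1)$, so all hypotheses hold. The coordinate swap $z_{ij}\mapsto z_{ji}$ lies in $\GL(4)$ and preserves $X$, but interchanges the two rulings, hence fixes $[X]$ while moving $[\phi]$ to the point of $\Hom(B,\Hilb_{h(k)}(\pr^3))$ parametrising the other ruling. Thus $\Stab([X])\not\subseteq\Stab([\phi])$ and your $\Psi$ identifies two distinct points of $\Phi$, so this route cannot be completed as written.

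For comparison, the paper does not argue via stabilisers at all: after the same construction of the morphism, it derives the embedding property from the fact that $\phi\colon B\to\Hilb_{h(k)}(\pr^N)$ (and likewise each point of $\Phi$) is an embedding, i.e.\ it retains the data of the family $\X_{\Phi,\psi}\subset B\times\pr^N$ rather than only its image in $\pr^N$. Your reduction is nevertheless a useful diagnostic, since it isolates exactly what injectivity of the map to $\Hilb_{l(k)}(\pr^N)$ costs, namely that no element of $\GL(N+1)$ may preserve the embedded total space while permuting the fibres; but as it stands the key step is missing and, absent further hypotheses, unprovable.
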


\begin{proof} This follows since $B \to \Hom$ is an embedding by our hypotheses. Over each point $\psi \in \Phi \subset \Hom$, the fibration $\X_{\Phi,\psi} \to B$, defined by taking the fibre over $\psi$, admits a natural embedding in $\pr^N$, since $X \to B$ itself embeds in $\pr^N$ by hypothesis. This defines $\X_{\Phi}$ as a subscheme of $\pr^N \times \Phi$, giving a map $\Phi \to \Hilb_{l(k)}(\pr^N)$ by the universal property of the Hilbert scheme. Since $\phi: B \to \Hom$ is itself an embedding, the same as true for each point of $\Phi$, which implies that the map $\Phi \to \Hilb_{l(k)}(\pr^N)$ we have constructed is also an embedding. \end{proof}

We now produce a natural compactification of $\Phi$. Associated to $\pi: X \to B$ is the surjection $$H^0(\pr^N,\scO(1))\times B \to \pi_*\scO(1) \to 0$$ of vector bundles over $B$. Let $\Quot$ denote the universal quotient of the trivial bundle $H^0(\pr^N,\scO(1))\times B$ over $B$, and let $\scU \to \Quot \times B$ be the associated universal family, so that $[\pi_*\scO(1)] \in \Quot$. The embedding $X \hookrightarrow \pi_*\scO(1)$ then exhibits $[X]$ as a point in $\Hilb(\pr(\scU))$, which is a projective scheme. We perform the same process in families to embed $\Phi \hookrightarrow \Hilb(\pr(\scU)),$ which is then the compactification of interest to us. 

The universal family $q: (\X_{\Phi}, \scO_{\Phi}(1)) \to B \times \Phi$ induces a bundle $q_*(\scO_{\Phi}(1))$ over $B \times \Phi$, which admits a surjection $$H^0(\pr^N,\scO(1)) \times B \times \Phi \to q_*(\scO_{\Phi}(1)) \to 0.$$ By the universal property of the $\Quot$ scheme, this gives a map $\Phi \to \Quot$ such that the pullback of $\U$ to $\Phi$ agrees with $q_*(\scO_{\Phi}(1))$. The embedding $\X_{\Phi} \hookrightarrow \pr(q_*(\scO_{\Phi}(1)))$ induces an embedding  $\X_{\Phi} \hookrightarrow \pr(\scU) \times \Phi$, giving by the universal property of the Hilbert scheme the required embedding \begin{equation} \Phi \hookrightarrow \Hilb(\pr(\scU)).\end{equation} The crucial point is that each point of $\Hilb(\pr(\scU))$ corresponds to a scheme which admits a morphism to $B$, since $\pr(\scU)$ itself admits a morphism to $B$. This is why $\Hilb(\pr(\scU))$ is the important object in the study of fibrations.

We now turn to the natural $\GL(N+1)$-action on $\pr^N$, which induces an action on  $\Phi \subset \Hom(B, \Hilb_{h(k)}(\pr^{N}))$ via right composition. Fibration degenerations, in this way, are automatically equivalent to one-parameter subgroups of $\GL(N+1)$.

\begin{lemma}\label{lemma:fibration-degens-one-ps} A fibration degeneration is equivalent to a one-parameter subgroup of $\GL(N_{j,k}+1)$, acting on $\Hilb(\pr(\U_{j,k}))$ for some $j,k$. \end{lemma}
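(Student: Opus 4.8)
The plan is to establish the correspondence in both directions: a fibration degeneration yields a one-parameter subgroup via the $\C^*$-equivariant embeddings of Corollary \ref{fibration-degeneration-embedding}, while a one-parameter subgroup yields a fibration degeneration by taking flat limits in the proper scheme $\Hilb(\pr(\U_{j,k}))$.

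First I would show that a fibration degeneration produces a one-parameter subgroup. Given a fibration degeneration arising from a degeneration $\E$ of $V_k$, I would pass to $l \gg 0$ and invoke Corollary \ref{fibration-degeneration-embedding}, which provides a $\C^*$-equivariant embedding $\X \hookrightarrow \pr(H^0(X, klH + jklL)) \times B \times \C$ together with the $\C^*$-equivariant trivialisation $(p_2 \circ \pi)_*(\H^l \otimes L^{jkl}) \cong H^0(X, klH + jklL) \times \C$. The point is that this trivialisation converts the $\C^*$-action on $\H^l$ into a linear $\C^*$-action on the fixed vector space $H^0(X, klH + jklL)$, which is precisely a one-parameter subgroup $\lambda$ of $\GL(H^0(X, klH + jklL)) = \GL(N_{jl,kl}+1)$. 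Since the embedding is $\C^*$-equivariant, acting by $\lambda$ on the embedded point $[\phi]$ and taking the closure of the orbit recovers $\X$, so $\lambda$ determines the original fibration degeneration.

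Conversely, a one-parameter subgroup $\lambda$ of $\GL(N_{j,k}+1)$ acts on $\pr^{N_{j,k}}$, hence on $\pr(\U_{j,k})$ and on $\Hilb(\pr(\U_{j,k}))$, compatibly with the $\Quot$-scheme action through which the embedding $\Phi \hookrightarrow \Hilb(\pr(\U_{j,k}))$ was constructed; here linear normality identifies $\GL(N_{j,k}+1)$ with $\GL(H^0(X, kjL+kH))$ acting on the trivial bundle $H^0(\pr^{N_{j,k}},\scO(1)) \times B$. Applying $\lambda$ to $[\phi]$ and taking the flat limit as $t \to 0$, which exists by properness of $\Hilb(\pr(\U_{j,k}))$, produces a $\C^*$-equivariant flat family over $\C$. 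On the $\Quot$-scheme factor this is a $\C^*$-equivariant degeneration $\E \to B \times \C$ of $\pi_*\scO(1) = V_k \otimes L^{jk}$; untwisting by $L^{-jk}$ exhibits $\E$ as a degeneration of $V_k$ in the sense of Section \ref{fibrationdegs}, and taking $\X = \overline{\C^*.X}$ inside $\pr(\E)$ recovers the associated fibration degeneration. I would then check these constructions are mutually inverse, so that the correspondence is genuine.

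The main obstacle is the forward direction. A general fibration degeneration is only flat over $\C$, and the pushforward $\pi_* \H^{kl}$ need not be flat over $B \times \C$ since the fibre dimensions may jump, so one cannot realise the $\C^*$-action directly at the original level $k$. This is exactly why one passes to $l \gg 0$ and relies on Corollary \ref{fibration-degeneration-embedding}, whose proof already circumvents the failure of flatness over $B \times \C$ via the flattening stratification argument of Lemma \ref{lemma:surjection}, so that the relevant pushforward over $\C$ is a genuine vector bundle admitting a $\C^*$-equivariant trivialisation. Accounting for this passage to large $l$ is why the statement asserts the equivalence only for \emph{some} $j,k$, rather than at a fixed level.
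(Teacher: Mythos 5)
Your proposal is correct and follows essentially the same route as the paper: one direction takes the orbit closure of the embedded point inside the proper scheme $\Hilb(\pr(\U_{j,k}))$ to produce the fibration degeneration, and the other uses the $\C^*$-equivariant embedding and trivialisation of Corollary \ref{fibration-degeneration-embedding} (after passing to $l \gg 0$) to extract the one-parameter subgroup. Your extra bookkeeping through the $\Quot$-scheme factor and the untwist by $L^{-jk}$ is just a more explicit account of what the paper leaves implicit, not a different argument.
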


Here $\U_{j,k}$ is the universal family associated to the Quot scheme constructed above, for arbitrary $j,k$.

\begin{proof} The proof is similar to that of Ross-Thomas for test configurations \cite[Proposition 3.7]{ross-thomas-hm}. Let $[X] \in \Hilb(\pr(\U_{j,k}))$. Given such a $\C^* \hookrightarrow \GL(N_{j,k}+1)$, taking the closure $$\X = \overline{\C^*.X} \subset \Hilb(\pr(\U_{j,k}))$$ inside the proper scheme $\Hilb(\pr(\U_{j,k}))$, it follows that $\X$ admits a a $\C^*$-action and an equivariant morphism to $B \times \C$. The $\scO(1)$-line bundle on $\pr(\U_{j,k})$ restricts to a line bundle on $\X$, hence producing a fibration degeneration.

Conversely, given a fibration degeneration $(\X,\H) \to (B,L)$, Corollary \ref{fibration-degeneration-embedding} produces an embedding $$ \pr(\pi_* \H^{l} \otimes L^{jkl})  \hookrightarrow \pr(H^0(X, klH+jklL)) \times B \times \C,$$ meaning a surjection $H^0(X, klH+jklL)\times B \times \C \to \pi_* \H^{l} \otimes L^{jkl} \to 0$ of bundles over $B \times \C$. This produces a map $\C \to \Quot$, such that $\pi_* \H^{l} \otimes L^{jkl}$ is the pullback of the universal quotient over $\Quot$. The embedding $\X \hookrightarrow \pi_* \H^{l} \otimes L^{jkl}$ exhibits $\X$ as a subscheme of $\pr(\U_{j,k})\times\C$, and equivariance of this construction implies that $\X$ arises through a $\C^*$-subgroup of $\GL(N_{j,k}+1)$.\end{proof}

\subsection{A moment map for fibrations} We now turn to the K\"ahler geometry, and construct a moment map for the $\SU(N+1)$-action on $\Phi$ induced from the $\GL(N+1)$-acion. We first construct a K\"ahler metric on $\Phi$. 

\begin{proposition} $\Phi$ admits a natural $\SU(N+1)$-invariant K\"ahler metric on each $\GL(N+1)$-orbit. \end{proposition}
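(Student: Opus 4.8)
The plan is to realise $\Phi$ as a smooth complex homogeneous space and to induce the Kähler metric by pulling back a Fubini--Study form under a $\GL(N+1)$-equivariant projective embedding. Write $G = \GL(N+1)$, $K = \SU(N+1)$, and let $\Gamma = \Stab_G([\phi])$ be the stabiliser of $[\phi]$. By construction $\Phi$ is the $G$-orbit of $[\phi]$, so it is identified with $G/\Gamma$; since $G$ is a complex Lie group and $\Gamma$ a closed complex subgroup, $\Phi$ is a smooth complex manifold carrying a $G$-invariant complex structure. It therefore suffices to produce on $\Phi$ a closed, $K$-invariant, positive $(1,1)$-form.

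First I would produce a $G$-equivariant locally closed immersion of $\Phi$ into a projective space. The embedding $\Phi \hookrightarrow \Hilb(\pr(\scU))$ constructed above realises $\Phi$ inside a projective scheme on which $G$ acts, the action descending from the standard action of $G$ on $\C^{N+1} = H^0(\pr^N, \scO(1))$, which induces an action on $\scU$ and hence on $\pr(\scU)$ and its Hilbert scheme. By Grothendieck's construction of the Hilbert scheme, for $m \gg 0$ there is a $G$-equivariant closed immersion of $\Hilb(\pr(\scU))$ into a Grassmannian $\Grass$ of quotients of a finite-dimensional $G$-representation, and the Plücker embedding then gives a $G$-equivariant embedding $\Grass \hookrightarrow \pr(\mathcal{W})$ for a finite-dimensional $G$-representation $\mathcal{W}$. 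Composing yields a $G$-equivariant embedding $\iota : \Phi \hookrightarrow \pr(\mathcal{W})$.

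Next I would fix a Hermitian inner product on $\mathcal{W}$. As $K$ is compact, averaging any Hermitian inner product over $K$ against Haar measure produces a $K$-invariant one, and with respect to it the Fubini--Study form $\omega_{FS}$ on $\pr(\mathcal{W})$ is $K$-invariant. Setting $\omega_\Phi = \iota^* \omega_{FS}$, this form is closed (since $\omega_{FS}$ is), of type $(1,1)$ (since $\iota$ is holomorphic), and $K$-invariant (by $K$-equivariance of $\iota$ together with $K$-invariance of $\omega_{FS}$). Positivity follows because $\iota$ restricted to the single orbit $\Phi$ is an injective holomorphic immersion: for a nonzero tangent vector $v$ one has $d\iota(v) \neq 0$, whence $\omega_{FS}(d\iota(v), J\, d\iota(v)) > 0$. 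Thus $\omega_\Phi$ is a $K$-invariant Kähler metric on $\Phi$.

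The point requiring most care is the positivity of $\omega_\Phi$, which rests on $\iota|_\Phi$ being an immersion; this reduces to $\Phi$ being a smooth locally closed submanifold together with the Hilbert-point and Plücker maps being closed immersions, so that no tangent direction of the orbit is collapsed. Smoothness is automatic from the description $\Phi \cong G/\Gamma$, and injectivity of $d\iota$ at every point of the orbit follows from injectivity of the embedding together with $G$-equivariance, which propagates injectivity of the differential at $[\phi]$ across the orbit. A secondary point is naturality: the construction depends only on the fixed data $(\pr^N, \scO(1), B, L)$ and the canonical $K$-invariant inner product, and one checks that a different auxiliary integer $m$ or choice of averaged inner product alters $\omega_\Phi$ only within its expected normalisation. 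Finally, since $\omega_\Phi$ is the restriction of a Fubini--Study form it automatically carries a moment map for the $K$-action, obtained by restricting the standard moment map $[w] \mapsto w \otimes \bar w / |w|^2$ on $\pr(\mathcal{W})$, which provides the starting point for the moment map $\mu : \Phi \to \su^*$ constructed next.
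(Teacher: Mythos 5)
Your argument is correct as a proof of the bare existence statement, but it takes a genuinely different route from the paper and, more importantly, produces a different metric from the one the paper actually uses. The paper defines $\Omega = q_*(ev^*\Omega^{\Hilb}\wedge\omega_B^n)$, the fibre integral over $B$ of the pullback of Zhang's K\"ahler metric on $\Hilb_{h(k)}(\pr^N)$ under the evaluation map; equivalently, $\Omega$ is the curvature of the Deligne metric on the Deligne pairing $\langle \scO(1),\dots,\scO(1),L,\dots,L\rangle$. The delicate point there is positivity: one must show that tangent vectors to the orbit pair trivially with $\omega_B$, which the paper deduces from the vanishing of the base component $h_B$ of the Hamiltonian (Proposition \ref{independent-Hamiltonian}), so that $\Omega$ reduces to a fibre integral of the manifestly positive $\Omega^{\Hilb}$. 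Your construction — equivariant embedding $\Phi\hookrightarrow\Hilb(\pr(\scU))\hookrightarrow\Grass\hookrightarrow\pr(\mathcal{W})$ followed by restriction of a Fubini--Study form for a $K$-averaged inner product — sidesteps all of this and is softer and shorter. What it buys less of is everything downstream: the map $\mu([\psi])=\int_B\mu_{\Hilb}(X_\psi,b)\,\omega_B^n$ is a moment map precisely for the paper's $\Omega$, and the identification of the GIT weight with $\H^{m+1}.L^n$ rests on the Deligne-pairing description of $\Omega$; neither statement holds for your $\omega_\Phi$ without substantial reworking. One smaller inaccuracy: when $\mathcal{W}$ is a reducible $K$-representation, distinct $K$-invariant Hermitian inner products give genuinely different Fubini--Study forms, not merely rescalings, so your metric is less canonical than claimed — whereas the paper's is pinned down by $\omega_B$ and the Fubini--Study form on $\pr^N$ alone.
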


\begin{proof} It is well known that the Hilbert scheme variety $\Hilb_{h(k)}(\pr^{N})$ admits a natural K\"ahler metric $\Omega^{\Hilb}$, defined first by Zhang \cite[Theorem 1.6]{zhang}. We work in our fixed orbit $\Phi$. Explicitly, letting $Y \subset \pr^{N}$ be a subvariety of dimension $m$ be associated to a point in $\Phi$, one can identify $T_{[Y]}\Hilb_{h(k)}(\pr^{N})$ with $H^0(Y, T\pr^{N}|_Y)$. Then the K\"ahler metric is given as $$\Omega^{\Hilb}_{[Y]}(u,v) = \int_Y \omega_{FS}(u,v) \omega_{FS}^m,$$ where $\omega_{FS}(u,v)$ denotes the pairing between the Fubini-Study metric and the \emph{normal} components of $u,v$ to $Y$  \cite[p. 25]{thomas-notes}. 

Consider the following diagram. 
\[
\begin{tikzcd}
\Phi\times B \arrow[swap]{d}{q} \arrow{r}{ev} & \qquad  \Hilb_{h(k)}(\pr^{N})  \\
\Phi & 
\end{tikzcd}
\]
One then obtains a smooth form on $\Phi$ via $$\Omega = q_*(ev^*\Omega^{\Ch} \wedge \omega_B^n), $$ with $q_*$ denoting the fibre integral. This form is closed by general properties of the fibre integral. So what remains is to show that this is K\"ahler on $\Phi$. 
 
Let $\psi \in \Phi$ induce a fibration $X^{\psi} \subset \pr^N$, via the natural embedding of $\Phi$ into $\Hilb_{l(k)}(\pr^N)$. Via this embedding, a tangent vector $u$ to $\Phi$ at $\psi$ can in particular be understood as an element $H^0(X^{\psi}, T\pr^N|_{X^{\psi}})$ just as above.

We claim that $\omega_B(u,\_)=0$. Since we are working in a fixed $\GL(N+1)$-orbit, a tangent vector at $\psi$ corresponds to an element of the Lie algebra $\mathfrak{gl}(N+1)$. We may then assume $u$ corresponds to a rational element by continuity, and hence integrates to a $\C^*$-action on the orbit $\Phi$. This induces a fibration degeneration by Lemma \ref{lemma:fibration-degens-one-ps}. Then by Proposition \ref{independent-Hamiltonian}, proved in Section \ref{sec:chern-weil} below, the  Hamiltonian associated to this $\C^*$-action vanishes, and hence indeed $\omega_B(u,\_)=0$.

Since $\omega_B(u,\_)=0$, we have \begin{equation}\label{Deligne-formula}\Omega[\psi](u,v) = \int_B \left(\int_{X_{\psi}/B}\Omega^{\Hilb}_{X^{\psi}_b}(u,v)\right)\wedge\omega_B^n,\end{equation} proving positivity since $\Omega_{\Hilb}$ is positive. 

The form $\Omega^{\Hilb}$ is $\SU(N+1)$-invariant, hence since $\SU(N+1)$ acts trivially on $\omega_B$, so is $\Omega$.
\end{proof}

\begin{remark} The above is motivated by a construction due to Wang in the setting of bundles \cite[Lemma 3.3]{wang-balanced-stable}. 
\end{remark}

We next give an algebro-geometric understanding of the K\"ahler metric $\Omega$ just produced, by exhibiting it as the curvature of a metric on an associated line bundle over $\Phi$. This uses the language of Deligne pairings, for which we refer to \cite{moriwaki} for an introduction. Briefly, let $\pi: \scZ \to Y$ be an arbitrary flat projective morphism of integral schemes of relative dimension $m$, and suppose $L_0,\hdots,L_m$ are line bundles on $\scZ$. One can push forward the cycle $$L_0\cdot \hdots \cdot L_m$$ on $\scZ$ to obtain a cycle $$\pi_*(L_0\cdot \hdots \cdot L_m)$$ on $Y$. The \emph{Deligne pairing} upgrades this to a line bundle on $Y$, denoted  $$\langle L_0, \hdots, L_m\rangle,$$ whose first Chern class is the same as of the cycle $L_0\cdot \hdots \cdot L_m$. 

Suppose now that each  $L_i$ is given a hermitian metric $h_i$ with curvature $\omega_i$. One can then induce a \emph{Deligne metric} on the Deligne pairing $\langle L_0, \hdots, L_m\rangle$, which is a hermitian metric denoted by  $$\langle h_0,\hdots, h_m\rangle.$$  When $\pi$ is a smooth morphism, the Deligne metric is a smooth hermitian metric by standard properties of the fibre integral. Its curvature is then given by the fibre integral $$\int_{\scZ/Y} \omega_0 \wedge \hdots\wedge \omega_m,$$ which is a closed $(1,1)$-form on $Y$.

\begin{lemma}Consider the universal fibration $\X_{\Phi} \to \Phi \times B$ over $\Phi$, and let $\scO_(\Phi)(1)$ and $L$ be the line bundles on $\X_{\Phi}$ induced by the universal family and via pullback from $B$. Then $\Omega$ is the curvature of a Deligne metric on the Deligne pairing $$\L_{\Hom} = \langle O(1),\hdots, O(1),L,\hdots, L\rangle,$$ with $m+1$ factors of $O(1)$ and $n$ factors of $L$.
 \end{lemma}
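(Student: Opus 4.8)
The plan is to identify the Kähler metric $\Omega$ produced in the previous proposition with the curvature of a Deligne metric, by exploiting the fact that both are computed by the \emph{same} fibre integral. First I would recall the explicit formula \eqref{Deligne-formula} for $\Omega$, which expresses $\Omega[\psi](u,v)$ as the iterated fibre integral over $B$ of the Hilbert scheme metric $\Omega^{\Hilb}$ pulled back via the evaluation map. The key observation is that $\Omega^{\Hilb}_{[Y]}(u,v) = \int_Y \omega_{FS}(u,v)\omega_{FS}^m$ is itself a fibre integral of Fubini--Study forms over the fibres of the universal family. Substituting this into $\Omega$ realises $\Omega$ as the fibre integral over $\X_\Phi \to \Phi$ of $m+1$ copies of the relative hyperplane curvature $\omega_{FS}$ wedged with $n$ copies of the pullback $\omega_B^n$, i.e. exactly the integrand $\int_{\X_\Phi/\Phi}\omega_{FS}^{m+1}\wedge \omega_B^n$.

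Next I would invoke the general theory of Deligne pairings recalled above. Since the Fubini--Study form $\omega_{FS}$ is the curvature of the natural hermitian metric on $\scO_\Phi(1)$, and $\omega_B \in c_1(L)$ is the curvature of a metric on the pullback of $L$, the curvature of the Deligne metric on $$\L_{\Hom} = \langle \scO(1),\hdots,\scO(1),L,\hdots,L\rangle$$ with $m+1$ factors of $\scO(1)$ and $n$ factors of $L$ is precisely the fibre integral $\int_{\X_\Phi/\Phi}\omega_{FS}^{m+1}\wedge\omega_B^n$. Because the universal fibration $\X_\Phi \to \Phi$ is smooth over each orbit (being a pullback of the universal family over the smooth base $B$), the Deligne metric is a smooth hermitian metric and its curvature is given by this fibre integral, matching $\Omega$ on the nose. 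The Deligne pairing has the correct number of factors because the relative dimension of $\X_\Phi \to \Phi$ is $n+m$, so one needs $n+m+1$ line bundles in total, which is exactly $(m+1)+n$.

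The main technical point to verify carefully is the reduction of the double fibre integral in \eqref{Deligne-formula} to the single fibre integral over $\X_\Phi \to \Phi$ defining the Deligne curvature. This requires commuting the fibre integral over $X_\psi/B$ with the integral over $B$, and checking that the integrand $\Omega^{\Hilb}_{X^\psi_b}(u,v)$ --- which pairs only the \emph{normal} components of the tangent vectors --- agrees, after this reorganisation, with the pairing appearing in the Deligne curvature formula. The vanishing $\omega_B(u,\underline{\;})=0$ established in the previous proposition is what guarantees that no cross-terms between vertical and horizontal directions contaminate the computation, so that the product structure of the integrand is preserved. I expect this bookkeeping --- matching the normal-component pairing of $\Omega^{\Hilb}$ with the wedge of relative Fubini--Study forms in the Deligne integrand --- to be the principal obstacle, though it is essentially a consequence of the definitions once the fibre integrals are set up correctly. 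The remaining steps are formal applications of the standard curvature formula for Deligne metrics under smooth morphisms.
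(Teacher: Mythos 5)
Your proposal is correct and follows essentially the same route as the paper: both identify $\Omega$ with the fibre integral $\int_{\X_\Phi/\Phi}\omega_{FS}^{m+1}\wedge\omega_B^n$ via Equation \eqref{Deligne-formula} together with the Deligne-pairing description of Zhang's metric $\Omega^{\Hilb}$, and then conclude by the standard curvature formula for Deligne metrics. The paper's proof is just a terser version of the same argument, citing Zhang's Deligne-pairing definition of $\Omega^{\Hilb}$ where you unwind the fibre integrals explicitly.
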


\begin{proof}One sees from Equation \eqref{Deligne-formula} and Zhang's definition of $\Omega^{\Ch}$ through the Deligne pairing that $$\Omega = \int_{\X_{\Phi}/\Phi} \omega_{FS}^{m+1} \wedge \omega_B^n.$$ Thus the result follows from the formula for the curvature of the Deligne pairing.
 \end{proof}

We next turn to the moment map. Recall the moment map $\mu_{FS}: \pr^{N}\to \su_{N+1}$ for the $SU({N}+1)$-action on projective space, defined by $$\mu_{FS}([z_0:\hdots: z_{N}]) = i\frac{z_i\bar z_j}{|z|^2} - \frac{i}{N+1}.$$ This induces a moment map $\mu_{\Hilb}: \Hilb_{h(k)}(\pr^{N}) \to \su_{N+1},$ defined by $$\mu_{\Hilb}(Y) = \int_Y \mu_{FS}\omega_{FS}^m,$$ see for example \cite{wang-balanced-stable}. Strictly speaking as above, the metric is only K\"ahler on any given smooth stratum, but this is irrelevant for our situation. Define a map $$\mu: \Phi \to \su_{N+1}$$ by $$\mu([\psi]) = \int_B \int_{X_{\psi}/B} \mu_{FS} \omega_{FS}^m \wedge \omega_B^n = \int_B \mu_{\Hilb}(X_{\psi}, b)\omega_B^n.$$ Here $\mu_{\Hilb}(X_{\psi}, b) = \mu_{\Hilb}(X_{\psi})(b) \in \su_{N+1}$. Note here that for each fibre $X^{\psi}_b$, the volume $\int_{\X^{\psi}_B}\omega_{FS}^m$ is the same by flatness, meaning that the integral does lie in $ \su_{N+1}$.

\begin{proposition} $\mu$ is a moment map for the $\SU(N+1)$-action on $\Phi$ with respect to the K\"ahler metric $\Omega$. \end{proposition}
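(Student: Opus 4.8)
The plan is to inherit the moment map identity on $\Phi$ from the corresponding identity on the Hilbert scheme, pushing it through the two operations used to build both $\Omega$ and $\mu$: pullback along the evaluation map $ev$ and fibre integration $q_*$ against the invariant volume form $\omega_B^n$. First I would record the input: $\mu_{\Hilb}$ is a moment map for the $\SU(N+1)$-action on $\Hilb_{h(k)}(\pr^N)$ relative to $\Omega^{\Hilb}$ (on the smooth locus, which is all that matters, since every point of $\Phi$ parametrises a smooth fibration). Writing $H^\xi := \langle \mu_{\Hilb},\xi\rangle$ for the Hamiltonian attached to $\xi\in\su_{N+1}$ and $\xi_{\Hilb}$ for its fundamental vector field, this input reads $\iota_{\xi_{\Hilb}}\Omega^{\Hilb} = dH^\xi$, in whichever sign convention the reference uses.

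The structural observation that drives everything is that $ev\colon\Phi\times B\to\Hilb_{h(k)}(\pr^N)$ is $\SU(N+1)$-equivariant, where $\SU(N+1)$ acts on $\Phi\times B$ only through the $\Phi$-factor and trivially on $B$. Hence the fundamental vector field $\xi_\Phi$ of $\xi$ on $\Phi$ lifts to $\tilde\xi = (\xi_\Phi,0)$ on the product $\Phi\times B$; this lift has no component along $B$ and satisfies $ev_*\tilde\xi = \xi_{\Hilb}$ together with $q_*\tilde\xi = \xi_\Phi$. The triviality of the $B$-action is precisely what will allow contraction to commute past the fibre integral.

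With this in hand I would compute $\iota_{\xi_\Phi}\Omega$ directly from $\Omega = q_*(ev^*\Omega^{\Hilb}\wedge\omega_B^n)$. Since $q\colon\Phi\times B\to\Phi$ is the projection of a product and $\tilde\xi$ has no $B$-component, contraction by $\tilde\xi$ commutes with integration over $B$, giving $\iota_{\xi_\Phi}\Omega = q_*(\iota_{\tilde\xi}(ev^*\Omega^{\Hilb}\wedge\omega_B^n))$. Now $\iota_{\tilde\xi}\omega_B^n = 0$ by the same vanishing of the $B$-component, while equivariance yields $\iota_{\tilde\xi}ev^*\Omega^{\Hilb} = ev^*(\iota_{\xi_{\Hilb}}\Omega^{\Hilb}) = ev^*(dH^\xi) = d(ev^*H^\xi)$. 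Using that $\omega_B$ is closed,
\[
\iota_{\xi_\Phi}\Omega = q_*\big(d(ev^*H^\xi)\wedge\omega_B^n\big) = q_*\big(d\big((ev^*H^\xi)\wedge\omega_B^n\big)\big).
\]
Finally, since $B$ is a closed manifold of even real dimension $2n$, fibre integration commutes with the exterior derivative, so the right-hand side equals $d\,q_*((ev^*H^\xi)\wedge\omega_B^n) = d\langle\mu,\xi\rangle$ by the very definition of $\mu$. This is the defining moment map identity, with the same sign convention as on the Hilbert scheme. Equivariance of $\mu$ is then inherited from that of $\mu_{\Hilb}$, since $\omega_B$ is $\SU(N+1)$-invariant and integration preserves equivariance.

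The main thing to make rigorous is the interplay between contraction, exterior derivative and the fibre integral: the projection formula $\iota_{\xi_\Phi}q_*(\cdot) = q_*(\iota_{\tilde\xi}(\cdot))$ and the commutation $d\circ q_* = q_*\circ d$ over the closed fibre $B$. Both are standard properties of integration along the fibre, but each genuinely uses the two features above, namely that $\tilde\xi$ is an honest lift with vanishing vertical part (equivalently, that $\SU(N+1)$ fixes $B$) and that $B$ is compact without boundary of even dimension, so that no boundary term and no sign appear. I would also note, as in the surrounding discussion, that $\Omega^{\Hilb}$ and $\mu_{\Hilb}$ live only on the smooth stratum of the Hilbert scheme, but this is harmless because $\Phi$ maps into the locus of smooth fibrations.
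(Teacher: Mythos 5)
Your proposal is correct and follows essentially the same route as the paper: contract $\Omega = q_*(ev^*\Omega^{\Hilb}\wedge\omega_B^n)$ with the fundamental vector field, invoke the moment map identity for $\mu_{\Hilb}$ on the Hilbert scheme, and commute the exterior derivative past the fibre integral over the closed fibre $B$, with equivariance inherited from $\mu_{\Hilb}$. The paper's proof is terser, leaving implicit the points you spell out (the lift $\tilde\xi=(\xi_\Phi,0)$ having no $B$-component, $\iota_{\tilde\xi}\omega_B^n=0$, and the sign/boundary considerations for $d\circ q_*=q_*\circ d$), but the argument is the same.
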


\begin{proof} 

Let $g \in \su$ induce $\tilde g \in T\Hilb(\pr^{N})$ via the infinitesimal action. Then $$ \iota_{\tilde g} \Omega  = \int_B  \iota_{\tilde g}\Omega^{\Hilb} \omega_B^n = \int_B \phi^*(d \langle \mu_{\Hilb}, g \rangle )\omega_B^n,$$  where we have used that $ \mu_{\Hilb}$ is a moment map. Then \begin{align*}\int_B \phi^*(d \langle \mu_{\Hilb}, g \rangle )\omega_B^n &= q_*(ev^*(d \langle \mu_{\Hilb}, g \rangle)\wedge \omega_B^n), \\ &= d \langle q_*((ev^*\mu_{\Hilb}) \omega_B^n) , g\rangle.\end{align*} The equivariance of the moment map follows from equivariance of $\mu_{\Hilb}$. 
\end{proof}

\begin{remark} Our proof is almost identical to Wang's analogous proof in the setting of bundles \cite[Lemma 3.4]{wang-balanced-stable}. \end{remark}

The main goal of this Section is to give an algebro-geometric understanding of when $\Phi$ admits a zero of the moment map $\mu$. The key point is that, for each fibration degeneration, one can extend the line bundle produced by the Deligne pairing to all of $\C$, rather than just the $\C^*$ orbit inside $\Phi$. Indeed, as a  fibration degeneration is a morphism of irreducible varieties, the Deligne pairing theory applies to produce a line bundle on $\C$ $$\scL_{\Hom} =\langle \H,\hdots, \H,L,\hdots, L\rangle \to \C.$$ The $\C^*$-action on $\X$ fixes $0 \in \C$ and acts on the one-dimensional vector space $\langle \H,\hdots, \H,L,\hdots, L\rangle_0$ with some weight, which we define to be \begin{equation}\label{weight-def-eqn}\mu(\lambda, [X]) = \wt \langle \H,\hdots, \H,L,\hdots, L\rangle_0.\end{equation} The following weight will be essential in giving an algebro-geometric understanding the moment map $\mu$ algebro-geometrically.

\begin{lemma}\label{GIT-vs-Chern-Weil} The weight $\mu(\lambda, [X])$ is given by $$\mu(\lambda, [X]) = \H^{m+1}.L^n = b_{0,0},$$ where $b_{0,0}$ is the leading order term in $j$ and $k$ of the weight polynomial. \end{lemma}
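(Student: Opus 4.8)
The plan is to identify the weight of the $\C^*$-action on the Deligne pairing fibre over $0\in\C$ with an intersection number on the total space of the fibration degeneration, and then to match this intersection number with the leading coefficient of the weight polynomial.

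First I would recall the standard fact that the weight of a $\C^*$-action on the Deligne pairing fibre at the fixed point $0\in\C$ is computed by an intersection number on the compactified total space. Concretely, the Deligne pairing $\scL_{\Hom}=\langle\H,\dots,\H,L,\dots,L\rangle$ over $\C$, with $m+1$ factors of $\H$ and $n$ factors of $L$, extends (after compactifying $\C$ to $\pr^1$) to a line bundle over $\pr^1$, and the difference of weights at $0$ and $\infty$ is computed by the intersection product $\H^{m+1}.L^n$ on the $(m+n+1)$-dimensional total space $\X$. Since the action is trivial on the general fibre (the pairing over $\C^*$ is $\C^*$-equivariantly trivial in the appropriate sense, as $L$ is pulled back from $B$ and carries no nontrivial weight), the weight at $0$ is precisely this intersection number. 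This gives the first equality $\mu(\lambda,[X])=\H^{m+1}.L^n$.

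Next I would identify $\H^{m+1}.L^n$ with the leading term $b_{0,0}$ of the weight polynomial. Recall from Lemma \ref{weight-degree} and its proof that the total weight $w(j,k)$ of the $\C^*$-action on $H^0(\X_0,k(\H_0+jL))$ is computed as an Euler characteristic on the compactification, and by Riemann-Roch for schemes is a polynomial of degree $n+m+1$ in $k$ and degree $n$ in $j$, with leading coefficient given by the top self-intersection $\tfrac{1}{(n+m+1)!}(\H+jL)^{n+m+1}$. Expanding $(\H+jL)^{n+m+1}$ in powers of $j$, using that $L$ is pulled back from the $n$-dimensional base $B$ so that $L^{n+1}=0$, the coefficient of the top power $j^n k^{n+m+1}$ isolates precisely the term $\binom{n+m+1}{n}\H^{m+1}.L^n$ up to the appropriate combinatorial and factorial normalisation, and this is by definition $b_{0,0}$. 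Thus the same intersection number that computes the Deligne weight is exactly the leading coefficient of $w(j,k)$ in both $j$ and $k$.

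The main obstacle I anticipate is bookkeeping the normalisation constants consistently between the two computations: the Deligne pairing weight is an honest intersection number with no factorials, whereas the weight polynomial $w(j,k)$ carries Riemann-Roch normalisations and binomial factors from the expansion in $j$ and $k$. One must verify that these conventions are chosen so that the leading coefficient $b_{0,0}$ equals $\H^{m+1}.L^n$ on the nose, rather than a nonzero multiple of it; this is where careful tracking of the definition of $b_{0,0}$ as the coefficient of $k^{n+m+1}j^n$ (and the matching normalisation in the Deligne pairing curvature formula used earlier for $\Omega$) does the real work. The geometric content, namely that the Deligne weight is an intersection number and that this intersection number governs the leading asymptotics of the weight polynomial, is standard; the care lies entirely in the constants.
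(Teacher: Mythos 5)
Your proposal is correct and follows essentially the same route as the paper: equivariantly compactify the Deligne pairing over $\pr^1$ so that the weight at $0$ becomes the degree of the pairing, identify that degree with the intersection number $\H^{m+1}.L^n$, and match this with $b_{0,0}$ via the intersection-theoretic (Riemann--Roch) formula for the weight polynomial. The paper handles the two identifications by citation (to Berman for the weight-as-degree step and to Odaka and Wang for the weight polynomial), and is no more explicit than you are about the combinatorial normalisations you rightly flag as the only delicate point.
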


\begin{proof} The GIT weight is the weight of the action on $\L_{\Ch}|_{[X_0]}$. By a standard compactification argument, one can equivariantly compactify $\L_{\Ch} \to \C$ to $\L_{\Ch} \to \pr^1$, and the GIT weight becomes the degree of this line bundle on $\pr^1$; see for example \cite[Lemma 2.6]{berman}. Since this is a Deligne pairing on $\pr^1$, the definition of its first Chern class implies that this equals $\H^{m+1}.L^n$. In turn, as in Section \ref{sec:metrics-stability}, this equals the term $b_{0,0}$ \cite[Theorem 3.2]{odaka2013}, \cite[Proposition 17]{xiaoweiwang12}. \end{proof}

\subsection{Equivariant Chern-Weil theory}\label{sec:chern-weil} This Section contains a technical result, which will allow us to give a differential-geometric interpretation of the weight $\mu(\lambda, [X])$. This will be crucial in relating stability to the moment map theory, and since this gives a differential-geometric interpretation also for $b_{0,0}$, this will also important in motivating the results of Section \ref{sec:semistability}.

The Hamiltonian condition on a (not necessarily compact) complex manifold $(Y,\omega)$ states for a real holomorphic vector field $v$ on $Y$ that $$dh = \iota_v\omega.$$ The following Lemma states that, on the central fibre of a test configuration, one obtains a Hamiltonian for the natural $\C^*$-action on the smooth locus, which extends as a continuous function on the total space.

\begin{lemma}\label{lemma:Hamiltonians} Let $(\X_0, \H_0,L)$ be the central fibre of a fibration degeneration. Let $\omega_{FS} \in c_1(\H|_0)$ be the restriction of a Fubini-Study metric arising from embedding the fibration degeneration in projective space. Then there is a continous function $$h_j = jh_B + h$$ on $\X_0$ which is a Hamiltonian on the smooth locus of $\X_0$ with respect to $j\omega_B + \omega_{FS}$, with $h$ the restriction of the standard Hamiltonian on projective space. \end{lemma}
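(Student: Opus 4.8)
The plan is to realise $h_j$ as the restriction to $\X_0$ of the standard moment map of an ambient projective space, and to split off the base contribution using that the $\C^*$-action is trivial along $B$. The reason this is the natural route is that the standard projective moment map is globally smooth, so its restriction is automatically continuous even where $\X_0$ is singular.

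First I would fix the ambient geometry. Restricting the embedding of Corollary \ref{fibration-degeneration-embedding} to $t=0$ realises the central fibre $\C^*$-equivariantly as $\X_0 \hookrightarrow \pr^N \times B$, where by Lemma \ref{lemma:fibration-degens-one-ps} the action is induced by a one-parameter subgroup $\lambda\colon \C^*\to \GL(N+1)$ acting linearly on the $\pr^N$-factor and trivially on $B$. Two consequences matter: the generating real-holomorphic vector field $V$ of the action on $\X_0$ is $\pi$-vertical, $d\pi(V)=0$, wherever $\pi\colon\X_0\to B$ is a submersion; and the lift of the action to $\H_0$ is the standard linearisation on the relative $\scO(1)$, while the lift to $L$ (being pulled back from $B$) is the trivial one. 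The linearisation on $\H_0$, together with the Fubini--Study metric, then gives on the smooth locus the canonical Hamiltonian $h$ with $dh=\iota_V\omega_{FS}$, and by construction $h$ is the restriction of the standard Hamiltonian $\mu_{FS}(\,\cdot\,,\lambda)$ on projective space. Because $\mu_{FS}$ is a globally smooth function on $\pr^N$ (a ratio of Hermitian norms with nowhere-vanishing denominator), its restriction $h=\mu_{FS}(\,\cdot\,,\lambda)|_{\X_0}$ extends to a continuous function on the whole of $\X_0$, including across the singular locus and the locus where $\pi$ degenerates.

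Next I would produce the base term and add. Since $\omega_B\in c_1(L)$ is pulled back from $B$ and $V$ is $\pi$-vertical, one has $\iota_V\omega_B=0$ on the submersion locus, so a Hamiltonian for $j\omega_B$ may be taken to be $jh_B$ with $h_B$ the pullback of a function on $B$ (constant along the fibres, and fixed by the chosen normalisation of the linearisation on $L$); this is exactly the $C^\infty(B)$-component singled out by the splitting used elsewhere in the paper. Invoking the additivity of Hamiltonians under addition of closed $(1,1)$-forms, the function $h_j:=jh_B+h$ then satisfies $dh_j=\iota_V(j\omega_B+\omega_{FS})$ on the smooth locus, is continuous on all of $\X_0$, and has the stated form with $h$ the restriction of the standard projective Hamiltonian.

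The hard part will be the interface between the smooth and singular strata: the Hamiltonian identity is only meaningful on the smooth locus, yet $h_j$ must be exhibited as a single continuous function on the typically singular central fibre, and $\pi$ need not be a submersion everywhere. Passing through the ambient projective space is precisely what resolves this, since restriction of the globally smooth $\mu_{FS}$ to any subvariety is continuous; the remaining care is bookkeeping of the twist relating $\scO_{\pr^N}(1)|_{\X_0}$ to $\H_0$ and $L$, which affects only the base-directed contribution and is harmlessly absorbed into $h_B$, and checking that the fibre/base separation extracting $h_B$ is pulled back from $B$ and hence insensitive to the fibrewise singularities.
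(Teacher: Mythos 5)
Your argument is correct, but it takes a genuinely different route from the paper's. The paper does not construct $h_j$ by hand: it invokes \cite[Lemma 2.17]{relative}, which produces a continuous function on the total space of an arbitrary test configuration that is a Hamiltonian for any $S^1$-invariant K\"ahler metric in the class, applies this to $(\X,2jL+\H)$ with the metric $j\omega_B+\omega_{FS}$, restricts to $\X_0$, and only then reads off the shape $h_j=h+jh_B$ by linearity in $j$. You instead build everything from the ambient geometry: $h$ is the restriction of the globally smooth projective moment map (which handles continuity across the singularities of $\X_0$), and the base term is controlled by the verticality of the generating vector field. Two small points of care in your write-up: first, equivariance of $\pi$ over the \emph{trivial} action on $B$ gives $d\pi(V)=0$ at every smooth point of $\X_0$, not only where $\pi$ is a submersion --- you need this on the whole smooth locus, since that is where the Hamiltonian identity is asserted; second, once $\iota_V(\pi^*\omega_B)=0$, any Hamiltonian for $j\omega_B$ is forced to be \emph{locally constant}, so your phrase ``$h_B$ the pullback of a function on $B$'' is too loose (a nonconstant pullback would violate $d(jh_B)=\iota_V(j\omega_B)=0$); simply take $h_B$ constant, e.g.\ zero. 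With those fixes your approach is more self-contained than the paper's and in fact proves more: it shows directly that $h_B$ is constant, which reduces Proposition \ref{independent-Hamiltonian} (proved in the paper via Donaldson's equivariant Chern--Weil computation) to pinning down a single normalising constant, whereas the paper's route leaves $h_B$ an a priori arbitrary function to be killed later.
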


\begin{proof} This is a straightforward consequence of the perspective taken in \cite{relative}. Indeed, working on the total space $(\X,jL+\H)$ of the test configuration, it is noted there that any $S^1$-invariant K\"ahler metric $j\omega_B + \omega_{FS} \in c_1(2jL+\H)$ induces a continuous function $h_j$ on $\X$ which is a Hamiltonian on the smooth locus \cite[Lemma 2.17]{relative}; this is proved for arbitrary K\"ahler test configurations. The construction even implies, for example, that if $f: W \to \X$ is any morphism with $W$ smooth, then $f^*h$ is a smooth function on $Z$. We apply this to the K\"ahler metrics $\omega_{FS} + j\omega_B \in c_1(2jL+\H)$, which by restriction gives a sequence of continuous functions $h_j$ on $\X_0$. Since these functions are genuine Hamiltonians in the usual sense on the smooth locus of $\X_0$, by linearity and the definition of a Hamiltonian one sees that they take the form $$h_j = h + jh_B,$$ with $h$ the standard Hamiltonian restricted from projective space. \end{proof}

The proof above does not use the fact that the central fibre $\X_0$ arose from a fibration degeneration. We next show that for fibration degenerations, the Hamiltonians have a special form.

\begin{proposition}\label{independent-Hamiltonian} The Hamiltonians constructed in Lemma \ref{lemma:Hamiltonians} are actually independent of $j$. That is, $h_B=0$. \end{proposition}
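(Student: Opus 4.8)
The crux is that the generating vector field of a fibration degeneration is \emph{vertical} for the projection to $B$; this is precisely the feature that distinguishes fibration degenerations from arbitrary test configurations, and it is exactly what is not used in Lemma~\ref{lemma:Hamiltonians}.

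The plan is first to make verticality precise. By Lemma~\ref{lemma:fibration-degens-one-ps} the fibration degeneration is induced by a one-parameter subgroup $\lambda\colon\C^*\to\GL(N_{j,k}+1)$ acting through the standard action on the projective bundle $\pr(\U_{j,k})\to B$; equivalently, it arises from a degeneration $\E$ of the bundle $V_k\to B$, whose $\C^*$-action covers the scaling action on $\C$ and the \emph{trivial} action on $B$. Passing to closures, the $\C^*$-action on $\X=\overline{\C^*.X}\subset\pr(\E)$, and hence its restriction to the central fibre $\X_0$, covers the trivial action on $B$. Writing $\pi\colon\X_0\to B$ for the natural morphism, it follows that the real holomorphic field $v$ generating the induced $S^1$-action on $\X_0$ satisfies $\pi_*v=0$; that is, $v$ is tangent to the fibres of $\pi$.

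Next I would isolate the base contribution to the Hamiltonian. Since $L$ is pulled back from $B$, the form $\omega_B\in c_1(L)$ entering $j\omega_B+\omega_{FS}$ is itself a pullback $\pi^*\omega_B$ (abusing notation as elsewhere), so for the vertical field $v$ one has, on the smooth locus of $\X_0$,
$$\iota_v\omega_B=\iota_v\pi^*\omega_B=\pi^*(\iota_{\pi_*v}\,\omega_B)=0.$$
On the other hand, by Lemma~\ref{lemma:Hamiltonians} the function $h_j=h+jh_B$ is a Hamiltonian for $j\omega_B+\omega_{FS}$, so $\iota_v(j\omega_B+\omega_{FS})=d(h+jh_B)$; comparing the coefficients of $j$, with both $h$ and $h_B$ independent of $j$, gives $\iota_v\omega_B=dh_B$. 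Combining the two yields $dh_B=0$ on the smooth locus.

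Finally I would promote this to $h_B\equiv0$. The central fibre $\X_0$ is connected, being the special fibre of a flat degeneration of the connected variety $X$, and $h_B$ is continuous on $\X_0$ by Lemma~\ref{lemma:Hamiltonians} while locally constant on the dense smooth locus, so it is constant on $\X_0$; with the normalisation built into the construction of the Hamiltonian — for which $h$ is the restriction of the traceless standard moment map on $\pr^{N_{j,k}}$ — this constant vanishes, giving $h_B=0$. I expect the genuinely new input to be the verticality of $v$, from which $\iota_v\omega_B=0$ is immediate; the main obstacle is the last step, where $\X_0$ may be singular and reducible, so one must argue carefully that a continuous function which is locally constant on the smooth locus of the connected $\X_0$ is globally constant, and then pin the resulting constant to zero using the chosen normalisation.
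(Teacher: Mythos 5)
Your route is genuinely different from the paper's, and for the main step it works. The paper never isolates the verticality of the generating field; instead it integrates $h_B^2$ against the volume form, identifies $\int_{\X_0}(h+jh_B)^2(\omega_{FS}+j\omega_B)^{m+n}$ with an intersection number $(\H_{\scZ}+2j\pi_{\scZ}^*L)^{m+n+2}$ on an auxiliary $(n+m+2)$-dimensional scheme via Donaldson's equivariant Chern--Weil construction, and kills the $j^{n+2}$-coefficient because $(\pi_{\scZ}^*L)^{n+2}=0$. You instead observe that, by Lemma \ref{lemma:fibration-degens-one-ps} and the definition of a bundle degeneration, the $\C^*$-action covers the trivial action on $B$, so the generating field $v$ on $\X_0$ is $\pi$-vertical and $\iota_v\pi^*\omega_B=0$ pointwise; comparing coefficients of $j$ in $d(h+jh_B)=\iota_v(j\omega_B+\omega_{FS})$ then gives $dh_B=0$ on the smooth locus. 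This is correct, more elementary, and arguably closer to the geometric reason the statement is true (indeed the paper itself deduces $\omega_B(u,\cdot)=0$ for orbit directions from this proposition, whereas your argument establishes that vanishing directly). Your care about connectedness is also warranted and handled correctly: $\X_0$ is the central fibre of a flat degeneration of the irreducible $X$, the closures of the connected components of the smooth locus cover $\X_0$, and continuity propagates the local constants across intersections.

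The gap is the last step. A Hamiltonian is only determined up to an additive constant, and the normalisation you invoke fixes $h$ (the $j^0$-part), not $h_B$; nothing you have written rules out $h_j=h+jc$ for a nonzero constant $c$. The paper's $\int h_B^2=0$ argument sidesteps this entirely, since it kills the constant and the derivative simultaneously. Within your approach the cheapest repair uses machinery already in the paper: Donaldson's weight formula (as in Corollary \ref{cor:chern-weil}) gives $\int_{\X_0}h_j\,(\omega_{FS}+j\omega_B)^{m+n}=(m+n)!\,b_0(j)$, and by Lemma \ref{weight-degree} the right-hand side has degree at most $n$ in $j$; since $\int_{\X_0}h\,(\omega_{FS}+j\omega_B)^{m+n}$ also has degree at most $n$ (as $\omega_B^{n+1}=0$), while $h_B\equiv c$ would contribute $j^{n+1}c\binom{m+n}{n}\int_{\X_0}\omega_{FS}^m\wedge\omega_B^n$ with $\int_{\X_0}\omega_{FS}^m\wedge\omega_B^n=L^n.H^m>0$, one concludes $c=0$. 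Alternatively, one can note that by the linearity used to produce the decomposition in Lemma \ref{lemma:Hamiltonians}, $h_B$ is the \emph{canonical} Hamiltonian attached to the pullback lift of the action to $\pi^*L$ with the pulled-back connection, and for a $\pi$-vertical field this canonical Hamiltonian vanishes identically; but this requires unwinding the construction of \cite{relative} rather than merely citing the normalisation of $h$. Either supplement closes the argument.
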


\begin{proof} Our strategy is to consider the integral $$f(j) = \int_{\X_0} (jh_B + h_X)^2(j\omega_B + \omega_{FS})^{m+n},$$ which has leading order term in $j$ $$f_0(j) = \int_{\X_0}j^{n+2}h_B^2\omega_B^n\wedge\omega_{FS}^n.$$ If we can show that actually $f_0(j)$ vanishes, then $h_B$ must equal zero on the smooth locus of $\X_0$, and hence by continuity must vanish on $\X_0$.

Donaldson shows that if $Y$ is a subscheme of projective space fixed under a $\C^*$-action on projective space, and if $$\wt H^0(Y,\scO_Y(k)) = b^Y_0 k^{\dim Y} + O(k^{\dim Y-1}),$$ with $\wt$ denoting the total weight of the $\C^*$-action on this vector space, then $$b_0^Y = \int_{Y} h_{FS} \omega_{FS}^{\dim Y},$$ where $h_{FS}$ is the Hamiltonian with respect to the Fubini-Study metric $\omega_{FS}$ on projective space \cite[Section 5.1]{donaldson-semistable}.  Moreover, if $$d_Y(k) =\tr(A_k)^2 = d^Y_0k^{\dim Y + 2} + O(k^{\dim Y+1}),$$ with $A_k$ the trace squared of the weights, then $$d_0^Y = \int_{Y} h_{FS}^2 \omega_{FS}^{\dim Y}.$$ Essentially the same results had earlier been proven by Wang \cite[Theorem 26]{wang-chow-stability}, however Donaldson's strategy adapts more readily to our situation. Clearly our result is a variant of this sort of result, so we briefly recall the strategy of Donaldson's proof.

The main point of Donaldson's argument is that $h$ is a smooth function on projective space, which allows one to integrate over the smooth locus of $\X_0$. In the integral of interest, the scheme $\X_0$ is viewed as a cycle, so the integral is defined as a sum over the integrals over each of the irreducible components, each of which admits a $\C^*$-action. If an irreducible component is not reduced, the integral in turn is defined as a multiple of the integral over the induced reduced scheme. On the smooth locus of (a component of) $\X_0$, the function $h$ is a genuine Hamiltonian, and in order to understand the term $b_0^Y$ Donaldson then constructs an $n+m+1$-dimensional scheme $\Y$ with a line bundle $\L_{\Y}$ which satisfies $$ \L_{\Y}^{m+n+1} = \int_{\Y}c_1(\L_{\Y})^{m+n+1} =-\int_{\X_0} h \omega_{FS}^{m+n}.$$ The scheme $\Y$ is constructed using a fibre bundle construction, so in our situation retains a map $\pi_{\Y}: \Y \to B$. What we wish to emphasise here is that it is not genuinely important that one uses the Fubini-Study metric and its associated Hamiltonian, rather what is important is that the natural Hamiltonian on the smooth locus extends to a continuous function on the entire scheme, which is then automatic for a Hamiltonian restricted from projective space. 

Examining Donaldson's construction in our situation of a fibration degeneration, one sees in this situation that one obtains a line bundle $\H_{\Y}$ such that $$\L_{\Y} = \H_{\Y} + 2j\pi_{\Y}^*L.$$ The construction for line bundles is perhaps more transparent from Ross-Thomas's exposition \cite{ross-thomas-orbifold}, from which it follows that the line bundle associated to $L\to \X_0$ on $\Y$ is of the form $\pi_{\Y}^*L.$

In order to deal with the $d_0^Y$ term, Donaldson similarly constructs an $n+m+2$ dimensional scheme $\pi_{\scZ}: \scZ\to B$ with a line bundle $\H_{\scZ}$ such that \begin{equation}\label{chern-weil-weight-squared}\int_{\X_0} (h+jh_B)^2 (\omega_{FS}+j\omega_B)^{m+n} = \int_{\scZ} (\H_{\scZ}+2j\pi_{\scZ}^*L)^{m+n+2},\end{equation} just as above. Here we are using that in Donaldson's construction, it is not important that one uses the Fubini-Study metric, but rather all one needs is a continuous Hamiltonian extending a genuine one on the smooth locus, whose existence in our situation follows from the previous paragraph. On the right hand side, by the projection formula and the fact that $\dim B = n$ one sees that $$ \int_{\scZ}(2jc_1(\pi_{\scZ}^*L))^{n+2}.c_1(\H_{\scZ})^m = (2j\pi_{\scZ}^*L)^{n+2}.\H_{\scZ}^m = 0,$$ hence on the left hand side of equation \eqref{chern-weil-weight-squared} the $j^{n+2}$-coefficient satisfies $$\int_{\X_0} h_B^2 \omega_{FS}^{m}\wedge \omega_B^n = 0.$$ It follows that $h_B$ is zero on the smooth locus of $\X_0$, hence by continuity vanishes on $\X_0$. \end{proof}

\begin{corollary}\label{cor:chern-weil}  We have $$\int_{\X_0} h \omega_{FS}^{m}\wedge\omega_B^n = b_{0,0}.$$ \end{corollary}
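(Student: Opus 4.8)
The goal is to identify the integral $\int_{\X_0} h\,\omega_{FS}^m\wedge\omega_B^n$ with the leading-order weight coefficient $b_{0,0}$. The plan is to combine the differential-geometric computation of weights in the style of Donaldson \cite[Section 5.1]{donaldson-semistable} with the vanishing established in Proposition \ref{independent-Hamiltonian}. Recall from Lemma \ref{lemma:Hamiltonians} that the Hamiltonian on $\X_0$ with respect to $j\omega_B + \omega_{FS}$ takes the form $h_j = jh_B + h$, and Proposition \ref{independent-Hamiltonian} tells us $h_B = 0$, so in fact $h_j = h$ is independent of $j$.

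\begin{proof}
By Donaldson's Chern--Weil computation of weights \cite[Section 5.1]{donaldson-semistable}, applied to the central fibre $\X_0$ of the fibration degeneration equipped with the Hamiltonian $h_j = jh_B + h$ for the $\C^*$-action with respect to $j\omega_B + \omega_{FS}$, the leading-order term in the total weight is given by integrating this Hamiltonian against the top power of the K\"ahler form. Explicitly, writing $\wt H^0(\X_0, k(\H_0 + jL))$ for the total weight, its leading coefficient in $k$ is
$$
\int_{\X_0} h_j\,(j\omega_B + \omega_{FS})^{m+n}.
$$
This is precisely the geometric meaning of the leading weight term computed in Lemma \ref{GIT-vs-Chern-Weil}, where it was identified with $\H^{m+1}.L^n = b_{0,0}$. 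By Proposition \ref{independent-Hamiltonian} we have $h_B = 0$, so $h_j = h$ and the integrand contains no positive powers of $jh_B$. Expanding $(j\omega_B + \omega_{FS})^{m+n}$ binomially and extracting the coefficient of $j^n$, which picks out exactly $n$ factors of $\omega_B$ and $m$ factors of $\omega_{FS}$ since $\omega_B^{n+1} = 0$ on the base of dimension $n$, we obtain
$$
\int_{\X_0} h\,\omega_{FS}^{m}\wedge\omega_B^{n} = b_{0,0},
$$
as required.
\end{proof}

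**Where the content lies.** The genuine input is Proposition \ref{independent-Hamiltonian}, which guarantees $h_B = 0$ and hence that the Hamiltonian is $j$-independent; without it one could not cleanly isolate the $j^n$-coefficient. The remaining step is the bookkeeping of matching the differential-geometric weight formula of Donaldson with the algebro-geometric weight coefficient $b_{0,0}$ identified via the Deligne pairing in Lemma \ref{GIT-vs-Chern-Weil}, together with the binomial extraction which uses only $\dim B = n$. I expect no serious obstacle here, as the corollary is essentially a restatement assembling the two preceding results; the main subtlety is simply ensuring the normalisations of the Fubini--Study Hamiltonian and the weight polynomial are consistent, which is handled by the cited conventions.
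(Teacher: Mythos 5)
Your proposal is correct and follows essentially the same route as the paper: apply Donaldson's equivariant Chern--Weil formula for the leading weight coefficient with the Hamiltonian $h_j = jh_B + h$, use Proposition \ref{independent-Hamiltonian} to set $h_B=0$, and extract the $j^n$-coefficient of $\int_{\X_0} h\,(\omega_{FS}+j\omega_B)^{m+n} = b_0(j)$ using $\dim B = n$. The extra appeal to Lemma \ref{GIT-vs-Chern-Weil} is harmless but not needed for the identification, and your treatment of the normalisation constants matches the paper's own (deliberately loose) conventions.
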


\begin{proof}

As $h_B=0$, an application of Donaldson's result to our situation gives $$\int_{\X_0} h (\omega_{FS}+j\omega_B)^{m+n} = b_0(j).$$ Expanding in $j$ gives $$\int_{\X_0} h \omega_{FS}^m\wedge\omega_B^n = b_{0,0},$$ which is precisely what we wished to prove.\end{proof}

\subsection{Relating stability to moment maps}

We return now to our orbit $\Phi$, which admits a $\GL(N+1)$-action, and let $[\phi] \in \Phi$. By analogy with Geometric Invariant Theory, we make the following definition:

\begin{definition}\label{stability-in-finite-dims} We say that a point $[\phi]$ is 
\begin{enumerate}[(i)] 
\item \emph{semistable} if and only if for all one-parameter subgroups $\lambda$, we have $\mu(\lambda,[\phi]) \geq 0$. 
\item \emph{stable} if and only if for all non-trivial one-parameter subgroups $\lambda$ we have $\mu(\lambda,[\phi]) > 0$. 
\item \emph{polystable} if and only if for all one-parameter subgroups $\lambda \notin \Lie(\Stab([X]))$, we have $\mu(\lambda,[\phi]) > 0$, with equality otherwise. 
\end{enumerate}

\end{definition}

Here, as before, for $\lambda: \C^*\hookrightarrow \SL(N+1)$ a one-parameter subgroup we  denote $$\mu([\phi],\lambda) = \wt \L_{\Hom}([X_0]),$$ following Equation \eqref{weight-def-eqn}.

The criterion in the polystability definition has a concrete interpretation in terms of the fibration $\pi: X \to B$ associated to $[\phi]$. Note that one can modify the below to obtain a statement concerning $\SL(N+1)$ in a natural way. 

\begin{lemma} Let $[\phi] \in \Phi$. Then the stabiliser $\Stab([\phi])$ under the $\GL(N+1)$-action can be identified with $$\Aut(X/B, \scO(1)) = \{g \in \Aut(X,\scO_{\pr^N}(1)): \pi \circ g = \pi\}.$$ \end{lemma}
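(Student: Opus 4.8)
The plan is to prove the claimed identification of the stabiliser by constructing mutually inverse maps between $\Stab([\phi])$ and $\Aut(X/B,\scO(1))$, using the functorial interpretation of $[\phi]$ as a morphism $\phi: B \to \Hilb_{h(k)}(\pr^N)$ whose value at $b$ is the fibre $[X_b]$. First I would recall that the $\GL(N+1)$-action on $\Phi$ is by right composition, so $g \in \GL(N+1)$ stabilises $[\phi]$ precisely when $g \circ \phi = \phi$ as morphisms to the Hilbert scheme; unwinding the definition of the evaluation morphism $ev: \Hom(B,\Hilb) \times B \to \Hilb$, this says that for every $b \in B$ the automorphism of $\pr^N$ induced by $g$ sends the subscheme $X_b \subset \pr^N$ to itself, i.e. $g \cdot X_b = X_b$ as points of $\Hilb_{h(k)}(\pr^N)$.

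Next I would package these fibrewise conditions into a single global statement. Since $X = \bigcup_{b} X_b$ is the total space of the fibration embedded in $\pr^N \times B$ (or, via the linearly normal embedding, in $\pr^N$ compatibly with $\pi$), the condition $g \cdot X_b = X_b$ for all $b$ is exactly the statement that the automorphism $\bar g$ of $\pr^N$ preserves $X$ fibrewise, hence preserves $X$ and commutes with the projection $\pi$. Because $\bar g$ is induced by an element of $\GL(N+1)$, it preserves $\scO_{\pr^N}(1)$, and its restriction to $X$ therefore preserves $\scO(1) = \scO_{\pr^N}(1)|_X$ while satisfying $\pi \circ \bar g = \pi$. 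This gives a well-defined map $\Stab([\phi]) \to \Aut(X/B,\scO(1))$.

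For the inverse I would argue that any $\psi \in \Aut(X/B,\scO(1))$, being an automorphism preserving the very ample line bundle $\scO(1)$, acts linearly on the space of sections $H^0(X,\scO(1)) \cong H^0(\pr^N,\scO(1))$ by linear normality, and hence is realised by a (projectively) unique element of $\GL(N+1)$; this uses the hypothesis that $X$ is linearly normal, which identifies $H^0(X,\scO(1))$ with $H^0(\pr^N,\scO(1))$ and so forces any line-bundle-preserving automorphism to extend to a linear automorphism of $\pr^N$. Since $\psi$ preserves each fibre $X_b$ and commutes with $\pi$, the resulting linear map stabilises each $[X_b]$, hence fixes $[\phi]$. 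One checks readily that these two assignments are group homomorphisms and mutually inverse, giving the claimed identification.

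The main obstacle, and the only genuinely non-formal point, is the surjectivity half: showing that every fibrewise automorphism preserving $\scO(1)$ genuinely comes from a \emph{global} element of $\GL(N+1)$ rather than merely a collection of fibrewise linear maps. This is where linear normality is essential, since it guarantees $H^0(X,\scO(1)) = H^0(\pr^N,\scO(1))$ so that an automorphism of $(X,\scO(1))$ induces a single linear automorphism of the ambient $\pr^N$; without this one would only obtain an automorphism of the image that need not extend. I expect the remaining verifications — equivariance, that the maps are inverse, and that the scaling ambiguity $\GL$ versus $\PGL$ is handled by working with the linearised action on sections — to be routine, and the parenthetical remark about $\SL(N+1)$ follows by the analogous argument after normalising determinants.
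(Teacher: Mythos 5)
Your proposal is correct and follows essentially the same route as the paper: both directions reduce to the observation that stabilising $[\phi]$ is equivalent to stabilising each fibre $[X_b]$, combined with the identification of $\Stab([X])$ with $\Aut(X,\scO_{\pr^N}(1))$. The only difference is expository — you spell out the linear-normality argument showing that a polarised automorphism of $X$ extends to an element of $\GL(N+1)$, whereas the paper simply takes this standard fact for granted.
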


\begin{proof} We take for granted the standard statement that an element $g \in \SL(N+1)$ fixes $X$ if and only if $g \in \Aut(X,\scO_{\pr^N}(1))$. Thus suppose in addition $g \in \Aut(X/B,\scO(1)) \subset \Aut(X,\scO_{\pr^N}(1))$. Then $\phi \circ g = \phi$, since $g$ fixes each fibre. Hence $\Aut(X/B, \scO(1)) \subset \Stab([\phi])$. 

Conversely, let $g\in \Stab([\phi]) \subset \GL(N+1)$. Note that $\Stab([\phi]) \subset \Stab([X]) = \Aut(X,\scO_{\pr^N}(1))$, since the condition that $g\in \Stab([\phi])$ means that $g \in \Stab([X_b])$ for all $b\in B$. But if $g \in  \Aut(X,\scO_{\pr^N}(1))\setminus \{\Aut(X/B, H)\}$, then there is a $b \in B$ such that $g(y) \notin X_b$ for some $y\in X_b$. But then $g \circ \phi \neq \phi$, a contradiction. \end{proof}

We can now prove the main result of this Section:

\begin{theorem} $[\phi]$  is polystable if and only if the orbit $\Phi$ of $[\phi]$ contains a zero of the moment map $$\mu: \Phi \to \su(N+1).$$ Moreover, in a fixed orbit, zeroes of the moment map are unique up to the action of $\Stab([X])$. 
\end{theorem}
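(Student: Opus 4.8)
The plan is to reduce this to the classical Kempf--Ness theorem for finite-dimensional GIT. The entire construction has been engineered so that $\Phi$ is a single $\GL(N+1)$-orbit carrying an $\SU(N+1)$-invariant K\"ahler metric $\Omega$ with a genuine moment map $\mu$, and the weight $\mu(\lambda,[\phi])$ governing polystability in Definition \ref{stability-in-finite-dims} is identified, via the Deligne pairing, with the weight of the $\C^*$-action on the fibre of $\L_{\Hom}$ over the central fibre. Thus the statement is precisely an instance of the Kempf--Ness/Hilbert--Mumford correspondence, and the main work is to verify that the hypotheses of that correspondence hold in our setting rather than to reprove it.

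First I would record the Kempf--Ness picture explicitly. On the orbit $\Phi$ we have the linearised line bundle $\L_{\Hom}$ equipped with the Deligne metric whose curvature is $\Omega$; the log of the norm of a fixed nonzero section along an orbit gives the Kempf--Ness functional, whose gradient is $\mu$ and whose second variation is controlled by $\Omega>0$ (proved in the Proposition establishing that $\Omega$ is K\"ahler). For a one-parameter subgroup $\lambda$, the classical computation shows that the asymptotic slope of this functional as $t\to 0$ along $\lambda$ equals the weight of the $\C^*$-action on the fibre of $\L_{\Hom}$ at the limit point, which by Lemma \ref{GIT-vs-Chern-Weil} and Equation \eqref{weight-def-eqn} is exactly $\mu(\lambda,[\phi])$. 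This is the bridge between the algebraic weight and the analytic moment map.

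Next I would run the standard two directions. For the forward implication, suppose $\Phi$ contains a zero $\mu([\psi])=0$; since $\mu$ is a moment map and $\Omega$ is K\"ahler on the orbit, the Kempf--Ness functional is convex along geodesics (one-parameter subgroups of the noncompact directions $i\su(N+1)$) and critical at $[\psi]$, hence proper and bounded below off the stabiliser directions. Its asymptotic slopes $\mu(\lambda,[\phi])$ are therefore strictly positive for $\lambda\notin\Lie(\Stab([\phi]))$ and vanish on the stabiliser, which by the preceding Lemma identifying $\Stab([\phi])$ with $\Aut(X/B,\scO(1))$ is exactly the polystability condition. Conversely, if $[\phi]$ is polystable, the positivity of all nontrivial weights forces the Kempf--Ness functional to be proper modulo the stabiliser, so it attains a minimum in the orbit, and that minimiser is a zero of $\mu$. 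Uniqueness up to $\Stab([X])$ follows from convexity: two zeros would both minimise the functional, and convexity along the connecting one-parameter subgroup forces them to lie in a common stabiliser orbit.

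The main obstacle, and the point requiring genuine care, is that $\Phi$ is only a single orbit and the metric $\Omega$ is K\"ahler only on the smooth strata, so one cannot invoke Kempf--Ness on a smooth projective ambient space naively. The convexity of the Kempf--Ness functional along $i\su(N+1)$-directions must be established directly from the Deligne-pairing metric on $\L_{\Hom}$, using that the fibrewise Fubini--Study integrals in Equation \eqref{Deligne-formula} vary convexly along geodesics exactly as in Wang's bundle setting \cite{wang-balanced-stable}; the vanishing of the base Hamiltonian in Proposition \ref{independent-Hamiltonian} is what guarantees the horizontal directions do not spoil this convexity. Once convexity and the slope-equals-weight identity are in hand, the equivalence and the uniqueness statement follow formally, so I expect the technical heart to be the convexity of the Kempf--Ness functional and the correct bookkeeping of stabiliser directions via the identification of $\Stab([\phi])$ with fibrewise automorphisms.
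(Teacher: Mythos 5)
Your forward direction (zero of the moment map implies polystability) and your uniqueness argument coincide with the paper's: convexity of the log norm functional along one-parameter subgroups, the identification of its limit derivative with $\int_{[X_0]}h_\lambda\,\omega_{FS}^m\wedge\omega_B^n$, and the translation of that integral into the algebraic weight via Corollary \ref{cor:chern-weil} and Lemma \ref{GIT-vs-Chern-Weil}. That part is fine.

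The gap is in the reverse direction. You assert that ``the positivity of all nontrivial weights forces the Kempf--Ness functional to be proper modulo the stabiliser, so it attains a minimum in the orbit.'' That implication --- from positivity of asymptotic slopes along every one-parameter subgroup to properness of the functional on the whole of $\GL(N+1)/\Stab([\phi])$ --- is precisely the hard content of the Kempf--Ness theorem, and it does not follow formally from convexity along one-parameter subgroups. In the classical setting it is established using either compactness of the ambient projective variety or the linear structure of the affine cone (via a $KAK$-type decomposition), and the paper explicitly flags that $\Phi$ is a non-compact single orbit, so the statements in the literature do not apply directly. The paper circumvents the properness question entirely: following Thomas, it fixes a maximal torus $T\subset\SL(N+1)$ with a basis of one-parameter subgroups $\lambda_j$, uses the linearisation on $\L_{\Hom}$ (and the attendant rationality/integrality) to produce, for each $\lambda_j$, a point where the corresponding Hamiltonian vanishes --- this is genuinely a one-variable convexity statement, where positive slopes at both ends do force an interior minimum --- then uses commutativity of the $\lambda_j$ to vanish all torus Hamiltonians simultaneously, and finally equivariance of $\mu$ to conclude that $\mu$ itself vanishes there. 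If you want to keep your properness-based route, you would need to supply the argument that ``linear properness'' (positivity of all weights) implies properness of the functional on the full non-compact orbit in this setting; as written, you have invoked the conclusion of the theorem at the one step where it needs proof.
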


Analogous results hold for stability and semistability. Since we are working on a non-compact space, this does not follow directly from the versions of the Kempf-Ness Theorem in the literature. We will see, however, with an integral class (allowing a rationality statement), that the techniques of the standard proofs apply in our situation. 

\begin{proof} We first show that if the orbit of $[\phi]$ admits a zero of the moment map, then $[\phi]$ is GIT polystable. Since $\mu$ is a moment map, convexity implies that the limit derivative of the associated log norm functional is greater than or equal to zero, with strict positivity for one-parameter subgroups not contained in the stabiliser of $[\phi]$. The derivative of the log norm functional is explicitly $$f(t) = \int_{\lambda(t).[X]}h_{\lambda} \omega_{FS}^m \wedge\omega_B^n,$$ with $h_{\lambda}$ the induced Hamiltonian. Thus the limit as $t$ tends to zero is given by the integral $$\int_{[X_0]}h_{\lambda} \omega_{FS}^m \wedge\omega_B^n.$$ This equals the GIT weight by Corollary \ref{cor:chern-weil} and Lemma \ref{GIT-vs-Chern-Weil}, proving the first part of the statement.

For the reverse direction, we must show that if $[\phi]$ is GIT polystable, then it admits a zero of the moment map.  The cleanest way of showing this is to use the technique exposited by Thomas \cite{thomas-notes}; we briefly recall his argument. One begins by fixing a maximal torus $T \subset \SL(N+1)$ with a basis $\lambda_j$ of one-parameter subgroups. For each such one-parameter subgroup, GIT polystability is shown in \cite[Theorem 4.4]{thomas-notes} to imply the existence of a point on which the Hamiltonian vanishes, using that the action admits a linearisation.  Since the $\lambda_j$ commute, one can find a point at which all Hamiltonians vanish. By equivariance of the moment map, this shows that the moment map itself vanishes at this point, just as in \cite[Theorem 4.4]{thomas-notes}.

Uniqueness is a standard consequence of convexity.\end{proof}

\section{Semistability as a necessary condition}\label{sec:semistability}

In this Section we prove the main result of the paper. Recall that for a fibrewise cscK metric $\omega_X$, we let $\eta = \Lambda_{\omega_B}\rho_{\H} + \Delta_{\scV}\Lambda_{\omega_B}\mu^*({F_{\H}})$, and denote by $p: C^{\infty}(X) \to C_E^{\infty}(X)$ the projection onto fibrewise holomorphy potentials, so that the condition $p(\eta) = 0$ is the optimal symplectic connection condition. 

\begin{theorem}\label{l1-bound}For all $j \gg 0$ we have $$\inf_{\omega_X }\|p(\eta)\|_1\geq - \inf_{(\X,\H)}\frac{j \W_0(\X,\H)+\W_1(\X,\H)}{\|(\X,\H)\|_{\infty}}.$$ 

\end{theorem}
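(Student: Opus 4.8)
The plan is to adapt Donaldson's lower bound on the Calabi functional to the relative setting, routed through the finite-dimensional moment map $\mu \colon \Phi \to \su(N+1)^*$ of Section \ref{sec:kempf-ness}. Fix a fibrewise cscK metric $\omega_X$ and a large $j$, and form the total-space metric $\xi_j = j\omega_B + \omega_X + j^{-1}\ddb\phi_R$. By Corollary \ref{cor:newscalexp} its scalar curvature satisfies
\[
S(\xi_j) - \hat S_j = j^{-1}\left( \psi_B + p(\eta) \right) + O(j^{-2}),
\]
where $\psi_B = \Lambda_{\omega_B}\Omega - c_{\Omega} \in C^{\infty}(B)$ is a base function of vanishing average and $p(\eta) \in C^{\infty}_E(X)$. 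For each large $k$ the metric $\omega_X$ determines a Bergman-type point $[\phi_k] \in \Phi$, and by Lemma \ref{lemma:fibration-degens-one-ps} a fibration degeneration $(\X,\H)$ corresponds to a one-parameter subgroup $\lambda$ with generator $\xi$ and Hamiltonian $h$.

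First I would record the finite-dimensional inequality. Since $\mu$ is a moment map for the $\SU(N+1)$-action, convexity of the associated Kempf--Ness functional along the orbit of $\lambda$ gives, exactly as in the proof of the Kempf--Ness theorem above,
\[
\| \mu([\phi_k]) \|_1 \geq \frac{-\mu(\lambda,[\phi_k])}{\|\lambda\|_{\infty}},
\]
where $\|\cdot\|_1$ is the norm on $\su(N+1)^*$ dual to the supremum norm of the induced Hamiltonian and $\mu(\lambda,[\phi_k])$ is the weight of Equation \eqref{weight-def-eqn}. The decisive structural point, and the reason for working with the fibration moment map on $\Phi$ rather than with the Calabi functional of the total space, is that $\mu$ pairs only with fibrewise data: by Proposition \ref{independent-Hamiltonian} the Hamiltonian of a fibration degeneration has vanishing base component, so that $h$ lies in $C^{\infty}_E(X)$ in the large-$k$ limit and in particular $\int_{X/B} h\, \omega_X^m = 0$.

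Next I would pass to the limit $k \to \infty$ and identify the three quantities. On the right, combining the Chern--Weil description of the weight (Corollary \ref{cor:chern-weil} and Lemma \ref{GIT-vs-Chern-Weil}) with the $k$-asymptotics relating it to the Donaldson--Futaki invariant of $(\X, jL+\H)$, the suitably normalised weights converge to $\DF(\X,jL+\H) = j^n\W_0(\X,\H) + j^{n-1}\W_1(\X,\H) + O(j^{n-2})$; crucially $\|\lambda\|_{\infty} = \|(\X,\H)\|_{\infty}$ is independent of $j$, because the underlying Hamiltonian is $j$-independent (Proposition \ref{independent-Hamiltonian}), so after dividing by the common factor $j^{n-1}$ the right-hand side becomes $-(j\W_0(\X,\H)+\W_1(\X,\H))/\|(\X,\H)\|_{\infty}$. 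On the left, the fibrewise Bergman-kernel asymptotics identify $\langle \mu([\phi_k]), \xi\rangle$ in the limit with a fixed multiple of $\int_X h\,(S(\xi_j)-\hat S_j)\,\xi_j^{n+m}$; since $h$ is fibrewise mean-zero the base term $\psi_B$ drops out of this pairing, and using $\int_X h\, p(\eta) = \int_X h\, \eta$ together with the H\"older estimate $\int_X h\, p(\eta)\,\omega_X^m\wedge\omega_B^n \leq \|h\|_{\infty}\|p(\eta)\|_1$, the left-hand side is controlled by $\|p(\eta)\|_1$. Matching the powers of $j$ — the volume form $\xi_j^{n+m}$ contributes $j^{n-1}\binom{n+m}{n}$ at leading order, cancelling the factor $j^{-1}$ from the scalar curvature expansion — then yields the asserted inequality once one takes the infimum over $\omega_X$ on the left and over fibration degenerations on the right.

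The main obstacle is the quantisation step. One must establish, with uniform control, the fibrewise Bergman-kernel asymptotics relating the finite-dimensional fibration moment map $\mu([\phi_k])$ to the continuous obstruction $p(\eta)$, while simultaneously extracting the Donaldson--Futaki order — and hence $j\W_0 + \W_1$ — from the Chow-type weights $\mu(\lambda,[\phi_k])$, which at any fixed level $k$ see only the leading intersection number $b_{0,0}$. This forces one to keep track of two asymptotic parameters, the quantisation level $k$ and the base-twisting parameter $j$, and in particular to bound the error terms uniformly enough that the limit $k \to \infty$ and the expansion in $j$ may be taken in either order. The subtle ingredient that makes the argument work is the verification that the fibration-degeneration Hamiltonian genuinely lies in $C^{\infty}_E(X)$ in the limit, so that $\psi_B$ is annihilated in the pairing; this is precisely what singles out the fibrewise moment map on $\Phi$, rather than the total-space Calabi functional, as the correct finite-dimensional model.
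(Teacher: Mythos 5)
Your outline correctly identifies most of the ingredients (Bergman kernels, the $j$-independence of the Hamiltonian from Proposition \ref{independent-Hamiltonian}, a convexity argument plus H\"older, and the equivariant Chern--Weil identification of weights), but there are two genuine gaps. The first is the one you yourself flag as ``the main obstacle'' and do not resolve: the finite-dimensional moment map weight $\mu(\lambda,[\phi_k])$ of Lemma \ref{GIT-vs-Chern-Weil} is the single intersection number $\H^{m+1}.L^n = b_{0,0}$, whereas the theorem requires the Donaldson--Futaki order $j\W_0+\W_1$, which lives in the subleading $k$-coefficients $a_1(j), b_1(j)$. Routing the argument through the moment map on $\Phi$ therefore cannot produce the statement; the paper instead applies the Luo--Zhang convexity of the classical \emph{balancing energy} $\mathcal{B}(\phi_{j,k}(X)) = \int_X \bigl(h - \tfrac{w(j,k)}{h(j,k)}\bigr)\omega_{FS}^{m+n}$, whose central-fibre limit is $-k^{m+n+1}\bigl(b_0(j) - \tfrac{w(j,k)}{kh(j,k)}a_0(j)\bigr)$ and hence captures exactly $\tfrac{b_0a_1}{a_0} - b_1$ at the next order in $k$. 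The Bergman-kernel work (Proposition \ref{prop:reduction}) is then used to convert this balancing energy, not the moment map, into the pairing against $p(\eta)$, with explicit two-parameter error control in $j$ and $k$. Indeed the paper remarks explicitly that the proof does \emph{not} rely on the finite-dimensional results of Section \ref{sec:kempf-ness}.

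The second gap is your treatment of the base term. You assert that $\psi_B$ ``drops out of the pairing'' because the Hamiltonian $h$ is fibrewise mean-zero, but Proposition \ref{independent-Hamiltonian} only gives $h_B=0$, i.e.\ that the $j$-linear base component of the Hamiltonian vanishes; it does not give $\int_{X/B}h\,\omega_{FS}^m=0$. That fibre integral is essentially the fibrewise Chow weight and is nonzero in general (e.g.\ for product degenerations acting nontrivially on the fibres). Consequently the term $-\int_X\bigl(h - \tfrac{w(j,k)}{h(j,k)}\bigr)(\Lambda_{\omega_B}\Omega - C_{\Omega})\,\omega_{FS}^{m+n}$ survives and must be bounded \emph{below}; the paper does this via the Lejmi--Sz\'ekelyhidi convexity argument for the J-functional of the semipositive pullback class, combined with the intersection-theoretic computation $J_T(\X,jL+\H)=O(j^{n-2})$ for fibration degenerations (Corollary \ref{cor:base-term}). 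This step is substantive --- the authors note that the favourable sign of this bound is crucial and has no conceptual explanation --- and cannot be dismissed by a vanishing claim. Without it, your lower bound acquires an uncontrolled $O(k^{m+n+1}j^{n-1})$ contribution of the same order as the term you are trying to isolate.
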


Here the infimum on the left is taken over all fibrewise cscK metrics $\omega_X \in c_1(H)$, while the infimum on the right is taken over all fibration degenerations. The norm used is the $L^{\infty}$-norm of a fibration degeneration, defined as Definition \ref{infty-def} in Section \ref{sec:C0norm}. As before, we use the notation $(\X,\H)$ to signify the data of the fibration degeneration $(\X,\H) \to (B,L)$ for $(X,H) \to (B,L)$.

\begin{corollary} If a fibration admits an optimal symplectic connection, then it is a semistable fibration. \end{corollary}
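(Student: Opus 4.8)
The plan is to read off the corollary directly from Theorem~\ref{l1-bound}, which already packages all of the analytic content. First I would observe that the hypothesis---existence of an optimal symplectic connection---produces a fibrewise cscK metric $\omega_X \in c_1(H)$ for which $p(\eta) = 0$, and hence $\|p(\eta)\|_1 = 0$. Since the $L^1$-norm is non-negative, this particular metric realises the infimum on the left-hand side of Theorem~\ref{l1-bound}, so that $\inf_{\omega_X}\|p(\eta)\|_1 = 0$.

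Feeding this into Theorem~\ref{l1-bound} gives, for all $j \gg 0$,
$$0 \geq -\inf_{(\X,\H)} \frac{j\W_0(\X,\H) + \W_1(\X,\H)}{\|(\X,\H)\|_{\infty}},$$
equivalently $\inf_{(\X,\H)} \frac{j\W_0(\X,\H)+\W_1(\X,\H)}{\|(\X,\H)\|_{\infty}} \geq 0$. Because an infimum is a lower bound for each member of the family, every fibration degeneration with $\|(\X,\H)\|_{\infty} > 0$ then satisfies
$$j\W_0(\X,\H) + \W_1(\X,\H) \geq 0 \quad \text{for all } j \gg 0.$$
I would then extract semistability by a short sign-chase. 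By Lemma~\ref{W0term}, K-polystability of the fibres already forces $\W_0(\X,\H) \geq 0$; alternatively this is recovered here on letting $j \to \infty$, since $\W_0(\X,\H) < 0$ would send the displayed quantity to $-\infty$. When $\W_0(\X,\H) = 0$, the displayed inequality collapses to $\W_1(\X,\H) \geq 0$. These are precisely the two conditions of Definition~\ref{defn:fibstab}, so the fibration is semistable.

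Thus the corollary is a formal consequence once Theorem~\ref{l1-bound} is available, and the only point I would treat with care is the behaviour of degenerations of vanishing $L^{\infty}$-norm, for which the quotient above is undefined. Here I would appeal to the definition of $\|\cdot\|_{\infty}$ (Definition~\ref{infty-def}): such degenerations carry no content for the estimate, since one expects $\W_0(\X,\H) = \W_1(\X,\H) = 0$ in this degenerate case, so that the semistability conditions hold trivially. The genuine difficulty lies entirely upstream, in establishing the lower bound of Theorem~\ref{l1-bound} itself---reducing the infinite-dimensional $L^1$-estimate to the finite-dimensional comparison between the moment map and the GIT weight $\mu(\lambda,[\phi])$, in the spirit of Donaldson---rather than in this passage from the bound to semistability.
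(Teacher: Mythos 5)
Your argument is correct and is essentially the paper's own proof: the paper likewise sets $\omega_X$ equal to the optimal symplectic connection to make the left-hand infimum in Theorem~\ref{l1-bound} vanish, and then reads off $\W_0(\X,\H)\geq 0$ and, in case of equality, $\W_1(\X,\H)\geq 0$. Your extra remarks on the sign-chase in $j$ and on degenerations of vanishing $L^{\infty}$-norm only spell out details the paper leaves implicit.
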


\begin{proof} Setting $\omega_X$ to be an optimal symplectic connection, the infimum on the left hand side of Theorem \ref{l1-bound} is zero. Thus $\W_0(\X,\H) \geq 0$, and when $\W_0(\X,\H)=0$, it follows that $\W_1(\X,\H) \geq 0$, meaning the fibration is semistable.
\end{proof}

The proof will consist of several steps. Heuristically, we reduce to a finite dimensional problem using Bergman kernels, and then apply convexity of a finite dimensional moment map to obtain the lower bound. The strategy is motivated by work of Donaldson \cite{donaldson-semistable}, however necessarily differs at several key points. In particular, we use the Bergman kernel at a different step, roughly in a more linear way that Donaldson. This  allows us to prove the above, but prevents us from working with more general norms. The $L^{\infty}$-norm has a special subharmonicity property that we expect holds in general, but are unable to establish; see Remark \ref{berndtsson}. When $B$ is a point we recover parts of Donaldson's work, with a slightly different strategy. As we are thus proving a more detailed result than Donaldson, new complications arise from the base $(B,L)$.

\subsection{Bergman kernels}

Any embedding of $X$ in projective space induces a K\"ahler  metric on $X$ by restricting the Fubini-Study metric. The Bergman kernel describes how, using the natural $L^2$-orthogonal embeddings of $X$ inside projective space induced from the hermitian metrics on $kjL+kH$, the Fubini-Study metrics approximate the fixed K\"ahler metrics in $kjc_1(L)+kc_1(H)$. It will be important to use the K\"ahler metrics $$\xi_j = j\omega_B + \omega_X +j^{-1} \ddb \phi_R$$ rather than the K\"ahler metrics $\omega_j = j\omega_B+\omega_X$. Recall from Section \ref{sec:osc} that $\phi_R$ is the fibrewise mean zero function on $X$ chosen such that the expansion in Corollary \ref{cor:newscalexp} holds. We denote by $h_j$ the hermitian metric on $jL+H$ with curvature $\xi_j$.

The $L^2$-inner product on the vector space $H^0(X,k(jL+H))$ is defined for all $k>0$ as $$\langle s, t \rangle = \int_X (s,t)_{h_j^{\otimes k}} \frac{(k\xi_j)^{m+n}}{(m+n)!}.$$ Thus one can take an $L^2$-orthonormal basis $s_i$ of $H^0(X,k(jL+H))$ and define a function $\rho_{k,j} \in C^{\infty}(X)$ by $$\rho_{j,k} = \sum_i |s_i|^2_{h_j},$$ which one checks is independent of chosen orthonormal basis. The following result, due to several authors and beginning with work of Tian \cite{tian-bergman} and Bouche \cite{bouche}, describes the behaviour of the Bergman kernel in the asymptotic limit $k\to \infty$.

\begin{theorem}\cite[Theorem 7.4]{szekelyhidi-book} There is a $C^{\infty}$-expansion $$\rho_{j,k} = 1 + k^{-1}\left(\frac{m+n}{2}\right)S(\xi_j) + O(k^{-2}).$$
\end{theorem}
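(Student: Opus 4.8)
The statement is the Tian--Yau--Zelditch expansion of the Bergman function for the fixed polarised manifold $(X,jL+H)$ (with $j$ fixed throughout), of complex dimension $N=m+n$, equipped with the Kähler metric $\xi_j$ and the Hermitian metric $h_j$ whose curvature is $\xi_j$. The plan is to follow Tian's peak-section method, as exposited in \cite{szekelyhidi-book}, rather than the microlocal approach of Boutet de Monvel--Sjöstrand. The starting point is the variational characterisation of the Bergman function: for each $x\in X$,
\[
\rho_{j,k}(x)=\sup_{0\neq s\in H^0(X,k(jL+H))}\frac{|s(x)|^2_{h_j^{\otimes k}}}{\|s\|^2_{L^2}},
\]
so that computing $\rho_{j,k}(x)$ amounts to (a) producing a near-extremal \emph{peak section} concentrated at $x$, giving the lower bound, and (b) showing that no section can asymptotically do better, giving the matching upper bound.

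First I would set up the local model. Around $x$ choose Kähler normal coordinates $z=(z_1,\dots,z_N)$ and a local holomorphic frame of $jL+H$ in which $h_j=e^{-\phi}$ with $\phi(z)=|z|^2-\tfrac14 R_{p\bar q r\bar s}\,z_p\bar z_q z_r\bar z_s+O(|z|^5)$; the vanishing of the lower-order corrections is precisely the normalisation afforded by normal coordinates, and the quartic term records the curvature of $\xi_j$. Rescaling $w=\sqrt{k}\,z$ sends the weighted space $L^2(e^{-k\phi})$, to leading order, to the Bargmann--Fock space of entire functions on $\C^N$ square-integrable against $e^{-|w|^2}$, whose reproducing kernel on the diagonal is the constant $\pi^{-N}$; with the volume normalisation $\tfrac{(k\xi_j)^N}{N!}$ built into the definition of $\rho_{j,k}$ this produces the leading value $1$. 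The subleading term is then extracted by expanding $e^{-k\phi}$ and the volume form in the quartic (and higher) corrections and evaluating the resulting Gaussian moments $\int_{\C^N}w^\alpha\bar w^\beta e^{-|w|^2}$; contracting the quartic curvature term against the model measure reproduces the trace of the Ricci form and hence the claimed coefficient $\tfrac{m+n}{2}S(\xi_j)$ in the convention of \cite{szekelyhidi-book}.

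To make the formal peak section genuine, I would multiply the model section by a cutoff supported in a ball of radius $\sim k^{-1/2}\log k$ about $x$ and correct it to a holomorphic section using Hörmander's $L^2$-estimate for $\bar\partial$ on $k(jL+H)$. The estimate applies for $k\gg 0$ because the relevant curvature $k\xi_j+\Ric(\xi_j)$ is positive, and the $\bar\partial$ of the cutoff is supported where $e^{-k\phi}$ is exponentially small, so the correction is $O(k^{-\infty})$ in $L^2$ and does not affect the asymptotics. This yields, to any prescribed order, a holomorphic section realising the model value at $x$, giving the lower bound; the matching upper bound follows from a sub-mean-value inequality for the pointwise norm of holomorphic sections, equivalently from the almost-orthogonality of normalised peak sections centred at nearby points.

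Finally, to promote the pointwise asymptotics to a $C^\infty$-expansion with smooth coefficients $a_0=1$ and $a_1=\tfrac{m+n}{2}S(\xi_j)$, one applies the same peak-section construction to derivatives of the kernel, using the exponential off-diagonal decay $|B_{j,k}(x,y)|\lesssim C\,e^{-c\sqrt{k}\,d(x,y)}$ of the normalised Bergman kernel to control the error terms uniformly in $k$; differentiating the model expansion in $x$ then recovers the derivatives of $\rho_{j,k}$ to the same order. The main obstacle is exactly this last passage, together with the coefficient bookkeeping: establishing the off-diagonal Gaussian decay uniformly in $k$ (and hence the $C^\infty$, not merely $C^0$, statement) is the technical heart of the argument, while the careful tracking of the Gaussian moments against the quartic curvature term — where factors of two and the sign convention for scalar curvature enter — is where the precise constant $\tfrac{m+n}{2}$ must be verified.
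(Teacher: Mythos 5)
This theorem is quoted in the paper verbatim from \cite[Theorem 7.4]{szekelyhidi-book} with no proof supplied, so there is nothing internal to compare against; your sketch is the standard Tian peak-section argument (variational characterisation of $\rho_{j,k}$, Bargmann--Fock local model in Bochner coordinates, H\"ormander $\bar\partial$-correction of cutoff sections, Gaussian-moment extraction of the subleading coefficient, off-diagonal decay for the $C^\infty$ statement), which is essentially the proof given in the cited source. The outline is sound; the only point requiring care is the one you already flag, namely matching the coefficient $\tfrac{m+n}{2}$ to the conventions in force here for the scalar curvature and for the $L^2$ inner product, whose volume form $(k\xi_j)^{m+n}/(m+n)!$ carries the factor of $k^{m+n}$ responsible for the leading term being $1$ rather than $k^{m+n}$.
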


As explained by Donaldson \cite[Proposition 6]{donaldson-scalar-curvature}, this expansion is uniform in compact families of metrics. Thus one can employ the expansion of the scalar curvature of $\xi_j$ in $j$ from Corollary \ref{cor:newscalexp}, giving $$\rho_{j,k} = 1 + k^{-1}\left(\frac{m+n}{2}\right)(S(\omega_b)+j^{-1}(\Lambda_{\omega_B} \Omega - c_{\Omega} +p(\eta)) + O(j^{-2}) ) + O(k^{-2}),$$ which gives $$\rho_{j,k} = 1 + k^{-1}\left(\frac{m+n}{2}\right)(S(\omega_b)+j^{-1}(\Lambda_{\omega_B} \Omega - c_{\Omega} +p(\eta))) + O(k^{-2}, k^{-1}j^{-2}).$$ The notation $O(\cdot, \cdot)$ means that there is a dependence on both the indicated rates, so that e.g. $f = g + O(k^{-2}, j^{-1})$ means that $|f-g| \leq C k^{-2} + C' j^{-1}$. 

An $L^2$-orthonormal basis $s_0,\hdots, s_{N_{j,k}}$ of $H^0(X,k(jL+H))$ gives an embedding $$\phi_{j,k}: X \to \pr^{N_{j,k}} = \pr(H^0(X,k(jL+H)))$$ by $$x \to [s_0(x):\hdots: s_{N_{j,k}}(x)].$$ The Fubini-Study metric $\omega_{FS}$ on $\pr^{N_{j,k}}$ restricts to a K\"ahler metric $\phi_{j,k}^*(\omega_{FS})$ on $X$. This K\"ahler metric on $X$ is independent of choice of $L^2$-orthonormal basis of $H^0(X,k(jL+H))$, but depends on $j$ and $k$.

The basic important property of the Bergman kernel is the following.

\begin{lemma}\cite[Corollary 7.5]{szekelyhidi-book} We have $$\phi_{j,k}^*(\omega_{FS}) - k\xi_j = \ddc \log \rho_{j,k}.$$ \end{lemma}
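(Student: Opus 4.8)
The plan is to recognise this identity as the standard pointwise comparison of two Hermitian metrics on a single line bundle, so that the statement reduces to an entirely formal local computation with no analytic content beyond the definition of $\rho_{j,k}$. First I would observe that the embedding $\phi_{j,k}$ is by definition given by the $L^2$-orthonormal basis $s_0,\dots,s_{N_{j,k}}$ of $H^0(X,k(jL+H))$, so that there is a canonical isomorphism $\phi_{j,k}^*\scO_{\pr^{N_{j,k}}}(1) \cong k(jL+H)$ of holomorphic line bundles under which the homogeneous coordinate $Z_i$ pulls back to $s_i$. Consequently the Fubini--Study metric $h_{FS}$ on $\scO(1)$, normalised so that its curvature is $\omega_{FS} = \ddc \log \sum_l |Z_l|^2$, pulls back to a Hermitian metric $\phi_{j,k}^* h_{FS}$ on $k(jL+H)$ whose curvature is precisely $\phi_{j,k}^*(\omega_{FS})$.

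The key step is then to compare $\phi_{j,k}^* h_{FS}$ with the reference metric $h_j^{\otimes k}$, whose curvature is $k\xi_j$. I would fix a local holomorphic frame $e$ for $k(jL+H)$ and write $s_i = f_i e$ with $f_i$ local holomorphic functions and $|e|^2_{h_j^{\otimes k}} = e^{-\varphi}$, so that by definition $\rho_{j,k} = \sum_i |s_i|^2_{h_j^{\otimes k}} = e^{-\varphi}\sum_i |f_i|^2$. Using $|Z_i|^2_{h_{FS}} = |Z_i|^2 / \sum_l |Z_l|^2$ together with the description $\phi_{j,k} = [f_0 : \cdots : f_{N_{j,k}}]$ in this frame, a short computation gives $|s_i|^2_{\phi_{j,k}^* h_{FS}} = |f_i|^2 / \sum_l |f_l|^2$, and hence the ratio of the two metrics equals \[ \frac{\phi_{j,k}^* h_{FS}}{h_j^{\otimes k}} = \frac{1}{e^{-\varphi}\sum_l |f_l|^2} = \frac{1}{\rho_{j,k}}. \] This is manifestly independent of the chosen orthonormal basis, consistent with the corresponding property of $\rho_{j,k}$.

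To conclude I would invoke the elementary fact that two Hermitian metrics on a line bundle differing by a positive smooth factor have curvatures differing by $\ddc$ of the logarithm of that factor. Since in the frame $e$ the metric $\phi_{j,k}^* h_{FS}$ has local potential $\varphi + \log \rho_{j,k}$ while $h_j^{\otimes k}$ has local potential $\varphi$, applying $\ddc$ yields $\phi_{j,k}^*(\omega_{FS}) = k\xi_j + \ddc \log \rho_{j,k}$, which is exactly the claimed formula. The only genuine difficulty is bookkeeping: one must pin down the normalisation convention for $\omega_{FS}$ and keep the sign conventions relating a bundle metric to its curvature consistent throughout. All of the hard asymptotic input has already been absorbed into the preceding expansion of $\rho_{j,k}$, so this lemma is purely a matter of unwinding definitions and applying Chern--Weil theory.
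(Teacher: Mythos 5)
Your argument is correct and is precisely the standard proof of this statement: the paper simply cites \cite[Corollary 7.5]{szekelyhidi-book} without reproving it, and the argument given there is the same comparison of the pulled-back Fubini--Study metric with $h_j^{\otimes k}$ via the ratio $1/\rho_{j,k}$, followed by taking $\ddc\log$ of that ratio. Your local computation and sign conventions check out, so there is nothing to add.
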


At the level of volume forms, by taking a power series expansion of the logarithm function, this implies the following.

\begin{corollary}\label{cor:volforms} We have 
\begin{align*} \phi_{j,k}^*(\omega_{FS})^{m+n} &=  k^{m+n} j^n \binom{m+n}{n} \omega_X^m\wedge \omega_B^n \\
&+ k^{m+n} j^{n-1} \left( \binom{m+n}{n-1}  \omega_X^{m+1} \wedge \omega_B^{n-1} + m \binom{m+n}{n} \omega_X^{m-1} \wedge \ddb \phi_R \wedge \omega_B^n \right)  \\
&+  O(k^{m+n-1} j^{n}, k^{m+n} j^{n-2}). 
\end{align*}
 \end{corollary}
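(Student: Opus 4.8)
The plan is to start from the two displayed expansions immediately preceding the statement and simply expand the logarithm, tracking powers of both $k$ and $j$ carefully. We are given
$$\phi_{j,k}^*(\omega_{FS}) - k\xi_j = \ddc \log \rho_{j,k},$$
so the strategy is to take the top exterior power of $\phi_{j,k}^*(\omega_{FS}) = k\xi_j + \ddc\log\rho_{j,k}$ and expand. First I would recall that $\xi_j = j\omega_B + \omega_X + j^{-1}\ddb\phi_R$, so that
$$k\xi_j = k\left( j\omega_B + \omega_X + j^{-1}\ddb\phi_R\right),$$
and then raise this to the power $m+n$. The dominant contributions come from the multinomial expansion of $(k\xi_j)^{m+n}$, since the correction term $\ddc\log\rho_{j,k}$ is of lower order in $k$: indeed, from the Bergman expansion $\rho_{j,k} = 1 + O(k^{-1})$, one has $\log\rho_{j,k} = O(k^{-1})$, hence $\ddc\log\rho_{j,k} = O(k^{-1})$ as a form, so it contributes only to the error term $O(k^{m+n-1}j^n)$ after taking the top power.

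The heart of the computation is therefore the expansion of $(k\xi_j)^{m+n}$. First I would pull out the overall $k^{m+n}$ and expand $(j\omega_B + \omega_X + j^{-1}\ddb\phi_R)^{m+n}$ in powers of $j$. Writing $\alpha = j\omega_B$, $\beta = \omega_X$, $\gamma = j^{-1}\ddb\phi_R$, and using that $\omega_B$ has rank at most $n$ on the base directions (so that $\omega_B^{n+1}=0$ after wedging with the relevant forms, as $\dim B = n$), the only surviving terms at leading order in $j$ are those with exactly $n$ factors of $\omega_B$. The leading $j^n$ term thus comes from choosing $n$ factors $j\omega_B$ and $m$ factors $\omega_X$, giving
$$\binom{m+n}{n} j^n\, \omega_X^m \wedge \omega_B^n$$
after accounting for the multinomial coefficient. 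At order $j^{n-1}$ there are two sources: choosing $n-1$ factors of $j\omega_B$ and $m+1$ factors of $\omega_X$, yielding $\binom{m+n}{n-1}\omega_X^{m+1}\wedge\omega_B^{n-1}$; and choosing $n$ factors of $j\omega_B$, one factor of $j^{-1}\ddb\phi_R$ (which carries a compensating $j^{-1}$), and $m-1$ factors of $\omega_X$, yielding $m\binom{m+n}{n}\omega_X^{m-1}\wedge\ddb\phi_R\wedge\omega_B^n$. Collecting these gives precisely the stated $j^{n-1}$ coefficient.

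The only genuinely delicate point — and the step I expect to require the most care — is bookkeeping of the error terms so as to justify the combined remainder $O(k^{m+n-1}j^n, k^{m+n}j^{n-2})$. The first part, $O(k^{m+n-1}j^n)$, accounts for all contributions involving at least one factor of $\ddc\log\rho_{j,k}$: such a term drops one power of $k$ (since $\ddc\log\rho_{j,k}=O(k^{-1})$) while its $j$-degree is at most $j^n$, giving $O(k^{m+n-1}j^n)$. The second part, $O(k^{m+n}j^{n-2})$, collects the remaining lower-order-in-$j$ terms of $(k\xi_j)^{m+n}$, namely those with at most $n-2$ factors of $j\omega_B$ surviving, together with the $O(j^{-2})$ tail implicit in the $\ddb\phi_R$ contributions; all of these are uniformly controlled because, by Donaldson's uniformity \cite[Proposition 6]{donaldson-scalar-curvature}, the Bergman expansion is uniform over the relevant compact family of metrics $\xi_j$. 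Since every quantity appearing is a smooth form on the compact manifold $X$, the $O$-estimates hold in $C^0$ (indeed $C^\infty$), which is all that is needed. The proof then amounts to writing out the multinomial expansion and grouping terms exactly as above.
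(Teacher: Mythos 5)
Your proposal is correct and is exactly the argument the paper intends: the Corollary is stated as an immediate consequence of the identity $\phi_{j,k}^*(\omega_{FS})=k\xi_j+\ddc\log\rho_{j,k}$ together with the Bergman expansion, obtained by expanding the logarithm and the multinomial power of $k\xi_j=k(j\omega_B+\omega_X+j^{-1}\ddb\phi_R)$, using $\omega_B^{n+1}=0$ to isolate the $j^n$ and $j^{n-1}$ coefficients. Your bookkeeping of the two error terms (the $\ddc\log\rho_{j,k}$ cross terms into $O(k^{m+n-1}j^n)$ — in fact these are even smaller, $O(k^{m+n-2}j^n)$ — and the remaining $j$-tail into $O(k^{m+n}j^{n-2})$) matches the stated remainder.
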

 
 One can use the Bergman kernel again to (asymptotically) replace $k^{m}\omega_X^m\wedge \omega_B^n$ with $\omega_{FS}^m\wedge\omega_B^n.$

\subsection{Reduction to finite dimensions}There are various norms on function spaces on $X$, defined using the available metrics. The most important for us will be the  $L^1$-norm, defined by $$ \|f\|_1 = \int_X |f| \omega_X^m \wedge\omega_B^n.$$ Recall in addition the projection operators $q: C^{\infty}(X) \to C^{\infty}_0(X)$ and $p: C^{\infty}(X) \to C^{\infty}_E(X)$ sending functions to functions with fibre integral zero and to functions which are fibrewise holomorphy potentials respectively. Note that $q$ sends constants to zero. 

Since the K\"ahler metrics $$\xi_j = \omega_j + j^{-1}\ddb \phi_R = j\omega_B + \omega_X + j^{-1} \ddb\phi_R$$ satisfy $$S(\xi_j) = \hat S_b + j^{-1}(S(\omega_B) - \Lambda_{\omega_B} \alpha + p(\eta)) + O(j^{-2}),$$ we have  \begin{align*}q(S(\xi_j)) &= j^{-1}p(\eta) +O(j^{-2}), \\ &=p(S(\omega_j)) + O(j^{-2}) \end{align*} Thus $$ \|p(\eta) \|_1 = j\|q(S(\xi_j)) \|_1 + O(j^{-1}).$$

We now fix a fibration degeneration $(\X,\H) \to (B,L)$. By Corollary \ref{fibration-degeneration-embedding}, for all $k \gg j \gg 0$ this degeneration can be realised through a $1$-parameter subgroup of $\GL(N_{j,k}+1)$, hence embedding $\X$ inside $\pr(H^0(X,k(jL+H))) \times \C$. Thus in what follows we will be most concerned in the asymptotic behaviour. Perhaps applying a unitary change of basis, we assume the action is diagonal, hence admits a Hamiltonian function $h$ with respect to the Fubini-Study metric of the form $$h = \sum_i \frac{h_i |z_i|^2}{\sum_i |z_i|^2}.$$ The fibration degeneration has associated numerical invariants $h(j,k)$ and $w(j,k)$ given by the Hilbert polynomial and weight polynomial respectively, with $h(j,k)$ is of degree $n+m$ in $k$ and degree $n$ in $j$, and with $w(j,k)$ of degree $n+m+1$ in $k$ and $n$ in $j$ by Lemma \ref{weight-degree}. The leading order terms of $h(j,k)$ and $w(j,k)$ in $k$ are denoted $a_0(j)$ and $b_0(j)$ respectively. In what follows, $\|f\|_{\infty}$ denotes the $L^{\infty}$-norm of a function, namely the maximum of its absolute value.

\begin{proposition}\label{prop:reduction} The function $ \|p(\eta)\|_1 \left\|\sum_i h_i |s_i|^2_{h^k_X \otimes h^{kj}_{B}} - \frac{w(j,k)}{k^{m+n} a_0(j)}\right\|_{\infty}$ is bounded below by \begin{align*} &-\binom{m+n}{n}^{-1} k^{1-m-n} j^{1-n}  \int_X \left(\sum_ i \frac{h_i|z_i|^2}{\sum_i |z_i|^2} - \frac{w(j,k)}{h(j,k)}\right) \omega_{FS}^{m+n}  \\ &- \binom{m+n}{n}^{-1}  k^{-m-n} j^{-n} \int_X \left(\sum_i h_i \frac{|z_i|^2}{\sum_i |z_i|^2}- \frac{w(j,k)}{h(j,k)}\right)(\Lambda_{\omega_B}\Omega -C_{\Omega})  \omega_{FS}^{m+n}. \end{align*}\end{proposition}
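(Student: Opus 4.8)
The plan is to apply Hölder's inequality to collapse the product of norms on the left into a single integral pairing, and then to evaluate that pairing asymptotically through the Bergman kernel expansions established above. Write $G = \sum_i h_i |s_i|^2_{h^k_X \otimes h^{kj}_B} - \frac{w(j,k)}{k^{m+n}a_0(j)}$. Since the $L^1$-norm is taken against $\omega_X^m \wedge \omega_B^n$, Hölder's inequality gives
\[ \|p(\eta)\|_1 \, \|G\|_{\infty} \;\geq\; \left| \int_X p(\eta)\, G\, \omega_X^m \wedge \omega_B^n \right| \;\geq\; -\int_X p(\eta)\, G\, \omega_X^m \wedge \omega_B^n, \]
so it suffices to show that this last integral coincides, up to terms absorbed by the errors implicit in the Bergman expansions, with the two-term expression on the right-hand side of the statement.

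First I would simplify the pairing. As $p(\eta) \in C^{\infty}_E(X)$ is of fibrewise mean zero, its $L^2$-pairing against any function pulled back from $B$ vanishes; in particular the constant $\frac{w(j,k)}{k^{m+n}a_0(j)}$ may be discarded, and moreover one is free to re-centre $h$ by the constant $\frac{w(j,k)}{h(j,k)}$ at no cost. On the embedded $X$ the homogeneous coordinates are the sections $s_i$, so $h = \big(\sum_i h_i|s_i|^2\big)/\rho_{j,k}$, that is $\sum_i h_i|s_i|^2 = h\,\rho_{j,k}$ with $\rho_{j,k}$ the relevant Bergman density. The integral to evaluate is thus $-\int_X p(\eta)\, h\, \rho_{j,k}\, \omega_X^m \wedge \omega_B^n$.

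The core step is to convert this into the Fubini-Study integrals of the statement, using the two Bergman expansions together. The density expansion $\rho_{j,k} = 1 + k^{-1}\tfrac{m+n}{2}S(\xi_j) + O(k^{-2})$, fed through Corollary \ref{cor:newscalexp}, brings in both $p(\eta)$ and the base term $\Lambda_{\omega_B}\Omega - C_{\Omega}$ as the two $j^{-1}$-order contributions to $S(\xi_j)$; simultaneously the volume-form expansion of Corollary \ref{cor:volforms} replaces $\omega_X^m \wedge \omega_B^n$ by $\binom{m+n}{n}^{-1}k^{-m-n}j^{-n}\,\omega_{FS}^{m+n}$ up to lower order. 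Matching the two leading orders in the joint $(k,j)$-expansion should produce exactly the two recorded terms: the term with prefactor $k^{1-m-n}j^{1-n}$ is the leading weight contribution, which by orthonormality of the $s_i$ and Donaldson's Chern-Weil identity (as in Corollary \ref{cor:chern-weil}) is $\int_X(h - \tfrac{w(j,k)}{h(j,k)})\omega_{FS}^{m+n}$, while the term with prefactor $k^{-m-n}j^{-n}$ carries the factor $\Lambda_{\omega_B}\Omega - C_{\Omega}$.

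The main obstacle is the asymptotic bookkeeping, which is delicate precisely because the leading weight cancels: $\frac{w(j,k)}{h(j,k)}$ agrees to top order with the $\omega_{FS}$-average of $h$, so $\int_X(h - \frac{w(j,k)}{h(j,k)})\omega_{FS}^{m+n}$ loses its highest-order term and the stability-relevant quantity $j\W_0(\X,\H) + \W_1(\X,\H)$ appears only at subleading order. Consequently one cannot replace $\omega_{FS}^{m+n}$ crudely by its leading multiple of $\omega_X^m \wedge \omega_B^n$; the subleading terms of Corollary \ref{cor:volforms} (the $\phi_R$- and $\omega_X^{m+1}\wedge\omega_B^{n-1}$-contributions, which also account for the discrepancy between the metric $h^k_X \otimes h^{kj}_B$ and the one defining $\rho_{j,k}$), together with the $p(\eta)^2$- and cross-terms generated in expanding $p(\eta)\rho_{j,k}$, must all be checked to fall into the error $O(k^{m+n-1}j^n, k^{m+n}j^{n-2})$ rather than into the two recorded terms. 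Controlling these uniformly rests on the uniformity of the Bergman expansion over the compact family of fibrewise cscK metrics.
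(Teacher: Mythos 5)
Your proposal follows essentially the same route as the paper's proof: H\"older's inequality to reduce to the pairing of $p(\eta)$ against $G=\sum_i h_i|s_i|^2-\tfrac{w(j,k)}{k^{m+n}a_0(j)}$, then the Bergman density expansion combined with Corollary \ref{cor:newscalexp} and the volume-form expansion of Corollary \ref{cor:volforms} to identify that pairing with $-kj$ times the balancing functional minus the base term, the orthonormality of the $s_i$ providing the leading-order cancellation, and the same delicate points (the $\phi_R$- and $\omega_X^{m+1}\wedge\omega_B^{n-1}$-corrections, uniformity of the expansion) flagged correctly. The one slip is the choice of branch in $\|p(\eta)\|_1\|G\|_\infty\geq \left|\int_X p(\eta)G\,\omega_X^m\wedge\omega_B^n\right|\geq \pm\int_X p(\eta)G\,\omega_X^m\wedge\omega_B^n$: since $S(\xi_j)-\hat S_j=j^{-1}\left(\Lambda_{\omega_B}\Omega-C_{\Omega}+p(\eta)\right)+O(j^{-2})$, the expansion shows that it is $+\int_X p(\eta)G\,\omega_X^m\wedge\omega_B^n$, not its negative, that coincides with the two-term expression on the right-hand side. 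Because H\"older supplies both branches this is harmless to the conclusion, but as written your reduction asserts the key identity with the wrong sign.
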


\begin{proof} The proof begins with an examination of the classical balancing functional, which takes the form $$\mathcal{B}(\phi_{j,k}(X)) =  \int_X \left(\sum_ i \frac{h_i|z_i|^2}{\sum_i |z_i|^2} - \frac{w(j,k)}{h(j,k)}\right) \frac{\omega_{FS}^{m+n}}{(m+n)!},$$ with $\phi_{j,k}: X \to \pr(H^0(X,kjL+kH))$ the Kodaira embedding. The function $h$ is homogeneous in the $|z_i|^2$, so we can write $$h = \sum_i \frac{h_i |z_i|^2}{\sum_i |z_i|^2} =  \sum_i \frac{h_i |s_i|_{h_B^j \otimes h_X}^2}{\sum_i |s_i|_{h_B^j \otimes h_X}^2} =  \sum_i \frac{h_i |s_i|_{h_B^j \otimes h_X}^2}{\rho_{j,k}}.$$ When clear from context, we will from now on omit the the norm subscript in $|s_i|_{h_B^j \otimes h_X}$. 

Let $\hat S_j$ be the constant satisfying $\int_X \hat S_j \omega_j^{m+n} = \int_X S(\omega_j) \omega_j^{m+n}.$ Then, using $$\left(\frac{m+n}{2}\right)\hat S_j = \frac{a_1(j)}{a_0(j)}$$ one computes $$  \frac{1}{h(j,k)} = \frac{1}{a_0(j)k^{m+n}}\left(1 - k^{-1}\left(\frac{m+n}{2}\right)\hat S_j\right) + O(k^{-m-n-2}).$$ Using this together with the Bergman kernel expansion for $\rho_{j,k}$, we see \begin{align*} &\mathcal{B}(\phi_{j,k}(X)) \\ =& \int_X \left(\sum_i h_i |s_i|^2(1 - k^{-1}\left(\frac{m+n}{2}\right)S(\xi_j)) - \frac{w(j,k)}{k^{m+n} a_0(j)}(1 - k^{-1} \left(\frac{m+n}{2}\right)\hat S_j)\right) \frac{(k\xi_j)^{m+n}}{(m+n)!} \\&+ O(k^{m+n-1} j^n).\end{align*} Since $w(j,k) = \sum_i h_i$, the $L^2$-orthonormality of the $s_i$ implies $$ \int_X \sum_i h_i |s_i|^2 \frac{(k\xi_j)^{m+n}}{(m+n)!}= \int_X \frac{w(j,k)}{k^{m+n} a_0(j)} \frac{(k\xi_j)^{m+n}}{(m+n)!} + O(k^{m+n} j^n, k^{m+n+1}j^{n-1}).$$ Thus since the function $\frac{w(j,k)}{k^{m+n} a_0(j)}$ is constant, this gives \begin{align*}\mathcal{B}(\phi_{j,k}(X)) &= k^{-1}\int_X \left(-\sum_i h_i |s_i|^2 + \frac{w(j,k)}{k^{m+n} a_0(j)}\right) S(\xi_j) \frac{(k\xi_j)^{m+n}}{(m+n)!} +O(k^{m+n-1} j^n), \\ &= k^{-1}\int_X \left(\sum_i h_i |s_i|^2 - \frac{w(j,k)}{k^{m+n} a_0(j)}\right)(\hat S_j -S(\xi_j)) \frac{(k\xi_j)^{m+n}}{(m+n)!}+O(k^{m+n-1} j^n).\end{align*}

We now allow $j$ to vary. Recall $$S(\xi_j) = S(\omega_b) + j^{-1}(S(\omega_B) - \Lambda_{\omega}\alpha + p(\eta)) + O(j^{-2}),$$ while $$\hat S_j = \hat S_b + j^{-1}S_{\alpha} + O(j^{-2}).$$ Thus \begin{align*}S(\xi_j) - \hat S_j &= j^{-1}(S(\omega_B) - \Lambda_{\omega}\alpha - \hat S_{\alpha} + p(\eta)) + O(j^{-2}) \\ &= j^{-1}(\Lambda_{\omega_B}\Omega -C_{\Omega} + p(\eta)) + O(j^{-2}). \end{align*}

Substituting this into the balancing energy gives \begin{align*} \mathcal{B}(\phi_{j,k}(X))  =& j^{-1}k^{-1}\int_X \left(\sum_i h_i |s_i|^2 - \frac{w(j,k)}{k^{m+n} a_0(j)}\right)(C_{\Omega} - \Lambda_{\omega_B}\Omega - p(\eta)) \frac{(k\xi_j)^{m+n}}{(m+n)!} \\ &+ O(k^{m+n-1} j^{n-1}, k^{m+n} j^{n-2}).\end{align*} Here we have used that $S(\omega_b) = \hat S_b$ to control the term involving the subleading order term in the volume form expansion of Corollary \ref{cor:volforms};
since this has one coefficient of the form $S(\omega_b) = \hat S_b$, it vanishes to leading order. This shows that in the $O(k^{2m+2n-2})$ term this expression is accurate to $O(j^{n-2})$, as claimed. Rearranging gives \begin{align*} & \int_X p(\eta)\left(\sum_i h_i |s_i|^2 - \frac{w(j,k)}{k^{m+n} a_0(j)}\right) \frac{(k\xi_j)^{m+n}}{(m+n)!} \\
=& -jk\mathcal{B}(\phi_{j,k}(X)) - \int_X (\Lambda_{\omega_B}\Omega - C_{\Omega})\left(\sum_i h_i |s_i|^2 - \frac{w(j,k)}{k^{m+n} a_0(j)}\right) \frac{(k\xi_j)^{m+n}}{(m+n)!} \\ &+ O(k^{m+n}, k^{m+n+1} j^{n-1}) . \end{align*}

Using the Bergman kernel again, we see that \begin{align*} &\int_X (\Lambda_{\omega_B}\Omega - C_{\Omega})\left(\sum_i h_i |s_i|^2 - \frac{w(j,k)}{k^{m+n} a_0(j)}\right) \frac{(k\xi_j)^{m+n}}{(m+n)!} \\ =& \int_X (\Lambda_{\omega_B}\Omega - C_{\Omega})\left(\sum_i \frac{h_i |z_i|^2}{\sum_i |z_i|^2} - \frac{w(j,k)}{h(j,k)}\right) \frac{(k\xi_j)^{m+n}}{(m+n)!} \\
&+ O(k^{m+n}, k^{m+n+1} j^{n-1}).
\end{align*}

To conclude the proof we use H\"older's inequality, which implies

\begin{align*}& \|p(\eta)\|_1 \left\|\sum_i h_i |s_i|^2_{h^k_X \otimes h^{kj}_{B}} - \frac{w(j,k)}{k^{m+n} a_0(j)}\right\|_{\infty} \\ \geq& \int_X p(\eta) \left(\sum_i h_i |s_i|^2_{h^k_X \otimes h^{kj}_{B}} - \frac{w(j,k)}{k^{m+n} a_0(j)}\right) \omega_X^m \wedge \omega_B^n, \\ =& k^{-m-n} j^{-n} m! n!  \int_X p(\eta) \left(\sum_i h_i |s_i|^2_{h^k_X \otimes h^{kj}_{B}} - \frac{w(j,k)}{k^{m+n} a_0(j)}\right) \frac{(k\xi_j)^{m+n}}{(m+n)!} \\
&+ O(1, k j^{-1}).\end{align*} Again it is important here that the coefficient of the subleading order term in the volume involving $\omega_B^{n-1}\wedge\omega_X^{m+1}$ and $  \omega_X^{m-1} \wedge \ddb \phi_R \wedge \omega_B^n$ vanishes to leading order. The result follows from summation of the equalities and inequalities we have obtained.\end{proof}

\subsection{The $L^{\infty}$-norm}\label{sec:C0norm}

Let $(\X,\H) \to (B,L)$ be a fibration degeneration. In order to bound the norm-type quantity appearing in the lower bound of Proposition \ref{prop:reduction}, it will be useful to compare the quantity occurring with Donaldson's $L^{\infty}$-norm of a test configuration \cite[p. 470]{donaldson-semistable}. The induced test configuration $(\X,\H+jL)$ with central fibre $\X_0$ and Hamiltonian $h$ has $L^{\infty}$-norm as a function of $j$ given by $$\max_i \left|\frac{\lambda_i}{k} - \frac{b_0(j)}{a_0(j)}\right |,$$ where $\lambda_i$ are the weights of the $\C^*$-action in a fixed projective embedding of the test configuration, which depend on $k$. The definition itself is independent of $k$ by Donaldson's results \cite[Section 5.1]{donaldson-semistable}.

\begin{lemma}\label{lemma:norm-bound} We have $$\left|\sum_i \frac{h_i}{k} |s_i|^2_{h^k_X \otimes h^{kj}_{B}} - \frac{w(j,k)}{k^{m+n+1}a_0(j)}\right|_{L^{\infty}} \leq \| (\X,\H+jL)\|_{\infty} + O(k^{-1},j^{-1}).$$ \end{lemma}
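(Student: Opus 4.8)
The plan is to reduce the statement to the elementary fact that a pointwise convex combination of real numbers lies in their convex hull, together with the Bergman kernel expansion and weight asymptotics already recorded. First I would rewrite the left-hand quantity in terms of the Hamiltonian and the Bergman kernel. The weights $h_i$ entering $h=\sum_i \tfrac{h_i|z_i|^2}{\sum_j|z_j|^2}$ are precisely the weights of the $\C^*$-action in the fixed projective embedding, so $\|(\X,\H+jL)\|_{\infty}=\max_i\bigl|\tfrac{h_i}{k}-\tfrac{b_0(j)}{a_0(j)}\bigr|$. As in the proof of Proposition \ref{prop:reduction} the projective coordinates satisfy $\tfrac{|z_i|^2}{\sum_j|z_j|^2}=\tfrac{|s_i|^2}{\rho_{j,k}}$, whence $\sum_i h_i|s_i|^2 = h\,\rho_{j,k}$ and the quantity to estimate becomes $\tfrac{h}{k}\rho_{j,k}-\tfrac{w(j,k)}{k^{m+n+1}a_0(j)}$.

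Next I would split this into three pieces by inserting the leading weight average $\tfrac{b_0(j)}{a_0(j)}$:
$$\frac{h}{k}\rho_{j,k}-\frac{w(j,k)}{k^{m+n+1}a_0(j)} = \Bigl(\frac{h}{k}-\frac{b_0(j)}{a_0(j)}\Bigr) + \frac{h}{k}\bigl(\rho_{j,k}-1\bigr) + \Bigl(\frac{b_0(j)}{a_0(j)}-\frac{w(j,k)}{k^{m+n+1}a_0(j)}\Bigr).$$
The first piece is the main term, and is controlled purely by convexity: since the coefficients $\tfrac{|z_i|^2}{\sum_j|z_j|^2}$ are nonnegative and sum to $1$, the function $\tfrac{h}{k}=\sum_i \tfrac{h_i}{k}\tfrac{|z_i|^2}{\sum_j|z_j|^2}$ is at every point a convex combination of the normalised weights $\tfrac{h_i}{k}$, so
$$\Bigl|\frac{h}{k}-\frac{b_0(j)}{a_0(j)}\Bigr| \leq \max_i\Bigl|\frac{h_i}{k}-\frac{b_0(j)}{a_0(j)}\Bigr| = \|(\X,\H+jL)\|_{\infty}$$
pointwise, hence in $L^\infty$.

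It then remains to show the two remaining pieces are $O(k^{-1},j^{-1})$. The third is a numerical error: writing $w(j,k)=b_0(j)k^{m+n+1}+b_1(j)k^{m+n}+\cdots$ gives $\tfrac{w(j,k)}{k^{m+n+1}a_0(j)}=\tfrac{b_0(j)}{a_0(j)}+O(k^{-1})$, and by Lemma \ref{weight-degree} the ratios of these degree-$n$ polynomials are bounded in $j$, so the error is uniform. For the middle cross-term I would invoke the Bergman kernel expansion $\rho_{j,k}=1+O(k^{-1})$, which is uniform in $j$ because $S(\xi_j)=S(\omega_b)+O(j^{-1})$ keeps the metrics in a compact family \cite[Proposition 6]{donaldson-scalar-curvature}; the crucial input is that $\tfrac{h}{k}$ is bounded \emph{uniformly in $j$}, which follows from Proposition \ref{independent-Hamiltonian}, where we showed that for a fibration degeneration the Hamiltonian is independent of $j$ (equivalently $h_B=0$). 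This renders $\tfrac{h}{k}(\rho_{j,k}-1)$ genuinely $O(k^{-1})$ rather than a term growing with $j$.

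The step I expect to be the main obstacle is precisely this uniform bookkeeping of the cross-term: without the $j$-independence of the Hamiltonian, $\tfrac{h}{k}$ could a priori grow with $j$, and then multiplying by $\rho_{j,k}-1=O(k^{-1})$ would produce an error of order $jk^{-1}$, which is not absorbed into $O(k^{-1},j^{-1})$ in the regime $k\gg j$. Once the uniform boundedness of $\tfrac{h}{k}$ supplied by Proposition \ref{independent-Hamiltonian} is in hand, the three estimates sum to the asserted inequality.
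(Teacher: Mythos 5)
Your proof is correct and follows essentially the same route as the paper: both rewrite $\sum_i h_i|s_i|^2$ as $h\,\rho_{j,k}$, bound $\bigl|\tfrac{h}{k}-\tfrac{b_0(j)}{a_0(j)}\bigr|$ by $\max_i\bigl|\tfrac{h_i}{k}-\tfrac{b_0(j)}{a_0(j)}\bigr|$ via the simplex/convexity observation, and absorb the Bergman-kernel and weight-polynomial corrections into the error term. Your three-term decomposition merely makes explicit the error bookkeeping (in particular the role of Proposition \ref{independent-Hamiltonian} in keeping $\tfrac{h}{k}$ bounded uniformly in $j$) that the paper's proof passes over by simply asserting uniform boundedness of the Bergman kernel.
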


\begin{proof} Since we are only interested in the inequality to leading order in $k$, we may replace $w(j,k)/k^{m+n+1}$ with $b_0(j)$. We next write $$\sum_i \frac{h_i}{k} |s_i|^2_{h^k_X \otimes h^{kj}_{B}} = \sum_i \frac{h_i |z_i|^2}{k(\sum_i |z_i|^2)}\rho_{j,k}.$$ The Bergman kernel is uniformly bounded by a constant, so it is enough to obtain a bound on the $L^{\infty}$-norm of $\sum_i \frac{h_i |z_i|^2}{k(\sum_i |z_i|^2)} - b_0(j)/a_0(j)$ as a function on $X$. This is clearly bounded above by the $L^{\infty}$-norm of the same function considered on projective space, which is to say $$\left |\sum_i \frac{h_i |z_i|^2}{k(\sum_i |z_i|^2)} - \frac{b_0(j)}{a_0(j)}\right |_{L^{\infty}(X)} \leq \left | \sum_i \frac{h_i |z_i|^2}{k(\sum_i |z_i|^2)} - \frac{b_0(j)}{a_0(j)}\right|_{L^{\infty}(\pr^{N_{j,k}})}.$$

We work on the overlying projective space, and by homogeneity restrict to the set $\sum_i|z_i|^2 = 1$. Thus we are required to bound the function $$\left|\sum_i \frac{h_i}{k} |z_i|^2 - \frac{b_0(j)}{a_0(j)} \right|$$ subject to the constraint $$\sum_i|z_i|^2 = 1.$$ It is clear that the infimum is achieved with value $\inf_i \frac{h_i}{k} - \frac{b_0(j)}{a_0(j)}$ and the supremum is achieved with value $\sup_i \frac{h_i}{k} - \frac{b_0(j)}{a_0(j)}$, which precisely gives the desired result in terms of Donaldson's $L^{\infty}$-norm of a test configuration.
\end{proof}

We now prove several results concerning the norms, all of which use that the Hamiltonian associated to a fibration degeneration is independent of $j$, as established in Section \ref{sec:chern-weil}.

\begin{remark}\label{berndtsson} We expect that an analogous result to the above is true for more general norms. Related results have been proven for  geodesic segments by Berndtsson \cite{berndtsson}, however there are no known analogous results that apply to geodesic rays. Such a result would enable us to extend our lower bound on $\|p(\eta)\|_1$ to more general norms. \end{remark}

\begin{lemma}\label{lemma:c0expansion} The $L^{\infty}$-norm $\| (\X,\H+jL)\|_{\infty}$ admits an expansion $$ \| (\X,\H+jL)\|_{\infty} = c_0 + j^{-1}c_1 + O(j^{-2}).$$ \end{lemma}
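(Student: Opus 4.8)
The plan is to reduce the statement to an elementary expansion of a ratio of polynomials, by first rewriting Donaldson's $L^\infty$-norm of the test configuration $(\X, \H+jL)$ in terms of the extreme values of the Hamiltonian on the central fibre $\X_0$. Recall from Lemma \ref{lemma:Hamiltonians} that the $\C^*$-action on $\X_0$ carries a continuous Hamiltonian $h_j = h + jh_B$ with respect to $j\omega_B + \omega_{FS}$, and that Proposition \ref{independent-Hamiltonian} gives $h_B = 0$, so that $h_j = h$ is a \emph{fixed} continuous function on $\X_0$, independent of $j$. By Donaldson's asymptotic analysis of the weights $\lambda_i$ on $H^0(\X_0, (\H_0+jL)^{\otimes k})$ \cite[Section 5.1]{donaldson-semistable}, the extreme normalised weights satisfy $\max_i \lambda_i/k \to h_{\max}$ and $\min_i \lambda_i/k \to h_{\min}$ as $k \to \infty$, where $h_{\max} = \max_{\X_0} h$ and $h_{\min} = \min_{\X_0} h$, while the average normalised weight converges to $\bar h_j := b_0(j)/a_0(j)$, which is the average of $h$ over $\X_0$ against $(j\omega_B+\omega_{FS})^{m+n}$. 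Since this measure is positive, $h_{\min} \le \bar h_j \le h_{\max}$, and hence the ($k$-independent) $L^\infty$-norm may be written as
\begin{equation*}
\|(\X, \H+jL)\|_\infty = \max_{\X_0}\left| h - \bar h_j\right| = \max\left( h_{\max} - \bar h_j,\ \bar h_j - h_{\min}\right).
\end{equation*}
The crucial point, which is exactly where Proposition \ref{independent-Hamiltonian} enters, is that $h_{\max}$ and $h_{\min}$ are constants independent of $j$; were $h_B \neq 0$ these extrema would themselves vary (linearly) with $j$ and the leading term would not stabilise.

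Next I would expand the average $\bar h_j$. By Lemma \ref{weight-degree}, both $b_0(j)$ and $a_0(j)$ are polynomials in $j$ of degree exactly $n$, and the leading coefficient of $a_0(j)$ is $\binom{n+m}{n} L^n . H^m > 0$. Writing $a_0(j) = a_{0,0}j^n + a_{0,1}j^{n-1} + O(j^{n-2})$ and $b_0(j) = b_{0,0}j^n + b_{0,1}j^{n-1} + O(j^{n-2})$ with $a_{0,0} \neq 0$, a routine geometric series expansion yields, for all $j \gg 0$, a convergent expansion
\begin{equation*}
\bar h_j = \frac{b_0(j)}{a_0(j)} = e_0 + e_1 j^{-1} + O(j^{-2}), \qquad e_0 = \frac{b_{0,0}}{a_{0,0}}.
\end{equation*}

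Finally I would combine the two ingredients. Substituting the expansion of $\bar h_j$ into the two arguments of the maximum gives $h_{\max} - \bar h_j = (h_{\max} - e_0) - e_1 j^{-1} + O(j^{-2})$ and $\bar h_j - h_{\min} = (e_0 - h_{\min}) + e_1 j^{-1} + O(j^{-2})$. If $h_{\max} - e_0 \neq e_0 - h_{\min}$, then for $j \gg 0$ one of the two branches strictly dominates, and the norm equals that branch, which is manifestly of the form $c_0 + c_1 j^{-1} + O(j^{-2})$. The only subtlety, which I expect to be the main (if minor) obstacle, is the borderline case $h_{\max} - e_0 = e_0 - h_{\min} =: c_0$, in which both branches share the leading coefficient $c_0$; there the maximum is $c_0 + |e_1| j^{-1} + O(j^{-2})$, since the sign of $e_1$ simply selects whichever branch dominates, and this is again of the required form. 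In every case one obtains $\|(\X, \H+jL)\|_\infty = c_0 + c_1 j^{-1} + O(j^{-2})$, as claimed. The genuinely non-formal content is thus confined to the first paragraph, namely the legitimacy of identifying the norm with $\max_{\X_0}|h - \bar h_j|$ via Donaldson's weight asymptotics together with the $j$-independence of $h$; the remaining steps are elementary manipulations of the maximum of two functions admitting expansions in $j^{-1}$.
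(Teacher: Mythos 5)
Your proposal is correct and follows essentially the same route as the paper's own (much terser) proof: identify the extreme normalised weights with the extrema of the $j$-independent Hamiltonian via Proposition \ref{independent-Hamiltonian}, and then expand $b_0(j)/a_0(j)$ in powers of $j^{-1}$. Your additional care with the maximum of the two branches (and the borderline case where they share a leading coefficient) is a detail the paper glosses over, but it does not change the argument.
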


\begin{proof} The maximum of the $\lambda_i/k$ is equal to the maximum of the Hamiltonian, by the definition of the Hamiltonian itself. By Proposition \ref{independent-Hamiltonian}, the Hamiltonian is independent of $j$, hence the expansion follows from the expansion of $b_0(j)/a_0(j)$. \end{proof}

The appropriate norm to consider for fibration degenerations is thus the following. 

\begin{definition}\label{infty-def} We define the $L^{\infty}$\emph{-norm} of the fibration degeneration to be $$\|(\X,\H)\|_{\infty} = c_0.$$ \end{definition}

One can analogously define other norms for fibration degenerations by the obvious generalisation, namely by using the leading order term in the expansion in $j$ of the norm of the associated test configuration. In particular, one obtains an analogous integral formulation in terms of the Hamiltonian.

\subsection{The base term} We now turn to the term $$-\int_X \left(\sum_i h_i \frac{|z_i|^2}{\sum_i |z_i|^2}- \frac{w(j,k)}{h(j,k)}\right)(\Lambda_{\omega_B}\Omega -C_{\Omega})  \omega_{FS}^{m+n}$$ involved in our lower bound on $\|p(\eta)\|_1$. By \cite[Lemma 4.1]{morefibrations}, since $\Omega$ is a form on $B$ we have \begin{equation} kj \Lambda_{\omega_{FS}}\Omega = \Lambda_{\omega_B} \Omega + O(k^{-1},j^{-1}).\end{equation} Thus if $\tilde C_{\omega}$ denotes the average value of $\Lambda_{\omega_{FS}}\Omega$ over $X$ with respect to the Fubini-Study volume form, we have $$kj\tilde C_{\omega} =  C_{\omega} + O(k^{-1},j^{-1}).$$ Thus we are interested in the integral $$-kj\int_X \left(\sum_i h_i \frac{|z_i|^2}{\sum_i |z_i|^2}- \frac{w(j,k)}{h(j,k)}\right)(\Lambda_{\omega_{FS}}\Omega - \tilde C_{\Omega})  \omega_{FS}^{m+n}.$$ We use the following result.

\begin{proposition}\cite{lejmi-szekelyhidi, twisted} Fix a smooth polarised variety $(Y,L_Y)$ of dimension $q$ and a closed semipositive integral $(1,1)$-form $\beta \in c_1(T)$ on $Y$. Consider any test configuration $(\Y,\L_{\Y})$ embedded in projective space via global sections of $L_Y^k$, with corresponding Hamiltonian $h$ with respect to the Fubini-Study metric. Then we have a lower bound $$-\int_Y \left(\sum_i h_i \frac{|z_i|^2}{\sum_i |z_i|^2}- \frac{w_Y(k)}{h_Y(k)}\right)(\Lambda_{\omega_{FS}}\Omega -  \tilde C_{\Omega})  \frac{\omega_{FS}^{q}}{q!} \geq k^q J_{T}(\Y,\L_Y) +O(k^{q-1}),$$ where $J_{T}(\Y,\L_Y)$ is the J-weight of $(\Y,\L_Y)$, defined in Equation \eqref{jweight}. \end{proposition}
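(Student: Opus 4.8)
The plan is to treat the left-hand side as a Bergman-kernel quantisation of the derivative of the twisted energy (the $J$-functional associated to $\Omega$) along the geodesic ray determined by $(\mathcal{Y},\mathcal{L}_Y)$, and to bound this below by the limiting slope, which is the $J$-weight. This is precisely the circle of ideas of Lejmi--Sz\'ekelyhidi \cite{lejmi-szekelyhidi} and of \cite{twisted}, adapted to the present normalisations, so at several points the plan is to import convexity statements from those works rather than reprove them.

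First I would rewrite the integrand using $\Lambda_{\omega_{FS}}\Omega \cdot \frac{\omega_{FS}^q}{q!} = \Omega\wedge\frac{\omega_{FS}^{q-1}}{(q-1)!}$, and expand the product $(h - c)(\Lambda_{\omega_{FS}}\Omega - \tilde C_\Omega)$, where $c = w_Y(k)/h_Y(k)$ and $h = \sum_i h_i |z_i|^2/\sum_i |z_i|^2$. Since $c$ and $\tilde C_\Omega$ are, up to the stated order in $k$, the averages $\bar h$ and $\bar\Theta$ of $h$ and of $\Theta := \Lambda_{\omega_{FS}}\Omega$, the cross terms combine and the left-hand side reduces, modulo $O(k^{q-1})$, to the covariance $-\int_Y h\,\Omega\wedge\frac{\omega_{FS}^{q-1}}{(q-1)!} + \bar h\int_Y \Omega\wedge\frac{\omega_{FS}^{q-1}}{(q-1)!}$, that is, to a combination of the two integrals $\int_Y h\,\Omega\wedge\omega_{FS}^{q-1}$ and $\int_Y h\,\omega_{FS}^q$ of the Hamiltonian against the available forms.

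Next I would identify each of these integrals intersection-theoretically. Because $h$ is the Hamiltonian of the $\mathbb{C}^*$-action in the chosen embedding and $\Omega\in c_1(T)$ is integral, Donaldson's equivariant Chern--Weil computation---in the form recalled in Proposition \ref{independent-Hamiltonian} and Corollary \ref{cor:chern-weil}---realises $\int_Y h\,\omega_{FS}^q$ as the leading weight coefficient, hence in terms of $\mathcal{L}^{q+1}$, and realises $\int_Y h\,\Omega\wedge\omega_{FS}^{q-1}$ as the corresponding twisted weight, hence in terms of the intersection number $\bar T\cdot\mathcal{L}^q$ on the compactified total space, with $\bar T$ the natural lift of $T$. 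Assembling these, with the average $\tilde C_\Omega$ playing the role of the twisted slope, one recognises the resulting combination as the $J$-weight $J_T(\mathcal{Y},\mathcal{L}_Y)$ of Equation \eqref{jweight}.

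The main obstacle is obtaining the \emph{inequality} rather than an equality. The Chern--Weil identifications above are exact only on a smooth model of the total space, and passing to the (possibly singular, non-normal) central fibre that defines $J_T$ produces a discrepancy. The role of the hypothesis $\Omega\geq 0$ is exactly to control the sign of this discrepancy: semipositivity of $\Omega$ is equivalent to convexity of the twisted energy along the associated geodesic ray, and convexity forces the finite-$k$ balancing integral to lie above the limiting algebraic slope. This convexity is the substantive input imported from \cite{lejmi-szekelyhidi, twisted}, and it is where the semipositivity hypothesis cannot be dropped; the remaining errors are then absorbed into the $O(k^{q-1})$ term using the uniformity of the Bergman expansion in compact families of metrics \cite[Proposition 6]{donaldson-scalar-curvature}.
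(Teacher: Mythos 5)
Your proposal matches the paper's treatment: the paper does not reprove this proposition but cites it, remarking only that it follows from a convexity argument of Lejmi--Sz\'ekelyhidi in the positive case, extended to the semipositive case in the reference \emph{twisted} --- which is precisely the convexity input you import, together with the standard equivariant Chern--Weil identification of the limiting slope with the intersection-theoretic $J$-weight. One small correction of framing: the inequality does not come from singularities of the central fibre spoiling the Chern--Weil identity (Donaldson's identity holds on the central fibre viewed as a cycle), but from convexity/monotonicity of the twisted balancing integral along the $\C^*$-orbit, which forces its value at the general fibre $t=1$ to dominate its limit at $t=0$ --- the mechanism your final sentences do correctly describe.
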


The proof uses a convexity argument, and is due to Lejmi-Sz\'ekelyhidi in the case that $\beta$ is positive \cite[Lemma 8]{lejmi-szekelyhidi}. Their argument extends to the semipositive case \cite[Corollary 2.26]{twisted}; in the situation of interest to us, the form $\Omega$ is strictly semipositive on $X$ as it is pulled back from $B$. For fibration degenerations, the total space $\Y$ admits a morphism to $B$, hence one can pull back $T$ to $\Y$. In this case the J-weight is most usefully defined by the following intersection-theoretic formula \cite[Proposition 4.29]{jflow}: \begin{equation}\label{jweight}J_T(\Y,\L_Y) =  \L_Y^q.T - \frac{q}{q+1}\frac{T.L^{q-1}}{L^q}\L_Y^{q+1}.\end{equation}

\begin{lemma} Suppose $(\X,\H) \to (B,L)$ is a fibration degeneration. Then for any line bundle $T$ on $B$, we have $$J_T(\X,jL+\H) = O(j^{n-2}).$$

\end{lemma}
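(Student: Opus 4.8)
The plan is to expand the intersection-theoretic expression \eqref{jweight} for $J_T(\X, jL+\H)$ in powers of $j$ and to show that the apparent leading coefficient, at order $j^{n-1}$, in fact vanishes. Writing $q = n+m = \dim X$ and taking the polarisation of the fibre $X$ to be $jL+H$, the formula reads
$$J_T(\X,jL+\H) = (jL+\H)^{q}.T - \frac{q}{q+1}\,\frac{T.(jL+H)^{q-1}}{(jL+H)^{q}}\,(jL+\H)^{q+1}.$$
The essential structural input is that both $T$ and $L$ are pulled back from $B$, which has dimension $n$; hence any monomial in $L$ and $T$ of total degree exceeding $n$ vanishes. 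First I would use this to read off the $j$-degree of each piece. In $(jL+\H)^{q}.T$ only the terms $j^{a}L^{a}.T.\H^{q-a}$ with $a \leq n-1$ survive, so this summand equals $\binom{q}{n-1} j^{n-1} L^{n-1}.T.\H^{m+1} + O(j^{n-2})$. Likewise $(jL+\H)^{q+1} = \binom{q+1}{n} j^{n} L^{n}.\H^{m+1} + O(j^{n-1})$, while the fraction is $O(j^{-1})$, its numerator being $O(j^{n-1})$ and its denominator $O(j^{n})$. Thus both summands are $O(j^{n-1})$, and the task reduces to checking that their $j^{n-1}$-coefficients cancel.

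The key step is an identity comparing intersection numbers on $\X$ with those on $X$. Using the projection formula for $p\colon \X \to B$, and the fact that the cycle $\H^{m+1}$ has dimension $n = \dim B$ so that $p_*\H^{m+1}$ is a multiple of $[B]$, I would intersect against the pullbacks $L^{n-1}.T$ and $L^{n}$ to obtain
$$\frac{L^{n-1}.T.\H^{m+1}}{L^{n}.\H^{m+1}} = \frac{(L^{n-1}.T)_B}{(L^{n})_B}.$$
Applying the identical argument to $\pi\colon X \to B$ and the cycle $H^{m}$ (also of dimension $n$) yields $\frac{T.L^{n-1}.H^{m}}{L^{n}.H^{m}} = \frac{(L^{n-1}.T)_B}{(L^{n})_B}$ as well, so the two ``slopes'' on $\X$ and on $X$ coincide. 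This is the heart of the argument: the subleading behaviour in $j$ is governed entirely by the base $B$.

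Finally I would substitute the leading coefficients into the $j^{n-1}$-term of $J_T$. Abbreviating $A = L^{n-1}.T.\H^{m+1}$ and $C = L^n.\H^{m+1}$, the two contributions are $\binom{q}{n-1} A$ and $\frac{q}{q+1}\,\frac{\binom{q-1}{n-1}}{\binom{q}{n}}\binom{q+1}{n}\,\frac{T.L^{n-1}.H^m}{L^n.H^m}\,C$. The identity of the previous paragraph gives $A = \frac{T.L^{n-1}.H^m}{L^n.H^m}\,C$, so cancellation reduces to the purely combinatorial claim $\binom{q}{n-1} = \frac{q}{q+1}\,\frac{\binom{q-1}{n-1}}{\binom{q}{n}}\binom{q+1}{n}$, which follows from $\frac{\binom{q-1}{n-1}}{\binom{q}{n}} = \frac{n}{q}$ together with $\frac{n}{q+1}\binom{q+1}{n} = \binom{q}{n-1}$. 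Hence the $j^{n-1}$-coefficient vanishes and $J_T(\X,jL+\H) = O(j^{n-2})$. The main obstacle here is conceptual rather than computational: recognising that the right tool is the projection formula onto $B$ and that both slope ratios collapse to the same base intersection number; once this identity is in hand, the remaining binomial bookkeeping is routine, and the degenerate cases (where $T.L^{n-1}.H^m$ or $L^{n-1}.T.\H^{m+1}$ vanishes) are handled uniformly by the same identity.
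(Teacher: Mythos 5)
Your argument is correct and is essentially the paper's own proof: both expand the intersection-theoretic formula \eqref{jweight} in powers of $j$ and cancel the two $j^{n-1}$-coefficients using the fact that the ratios $L^{n-1}.T.\H^{m+1}/L^n.\H^{m+1}$ on $\X$ and $T.L^{n-1}.H^m/L^n.H^m$ on $X$ both reduce to the base ratio $\deg_B(L^{n-1}.T)/\deg_B(L^n)$ --- the paper phrases this as the vanishing of $\H^{m+1}$ against the pullback of a degree-zero zero-cycle on $B$, which is precisely your projection-formula observation. You are simply more explicit about the binomial bookkeeping, which the paper leaves to the reader.
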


\begin{proof} The proof is an exercise in intersection theory. We have \begin{align*}J_T(\X,jL+\H) =& (jL+\H)^{m+n}.T - \frac{m+n}{m+n+1}\frac{(jL+H)^{m+n-1}.T}{(jL+H)^{m+n}}(jL+\H)^{m+n+1}, \\ =& j^{n-1} {m+n \choose n-1}L^{n-1}.T.\H^{m+1} \\ &- \frac{m+n}{m+n+1}\frac{(jL+H)^{m+n-1}.T}{(jL+H)^{m+n}}{m+n+1 \choose n} j^nL^n.\H^{m+1} \\ &+ O(j^{n-2}).\end{align*} To obtain the result from this, one expands and uses that $$(\H^{m+1}.(L^{n-1}.T))(L^n) = (\H^{m+1}.(L^n))(L^{n-1}.T)$$ for example since the degree of the cycle $$L^n\frac{\deg L^{n-1}.T}{\deg L^n} - L^{n-1}.T$$ on $B$ is zero, where we have included the degree operator on zero cycles for clarity. 
\end{proof}

Summing up, the base term does not affect the argument to highest order in $j$:

\begin{corollary}\label{cor:base-term} We have a lower bound $$-\int_X \left(\sum_i h_i \frac{|z_i|^2}{\sum_i |z_i|^2}- \frac{w(j,k)}{h(j,k)}\right)(\Lambda_{\omega_{B}}\Omega - C_{\Omega})  \omega_{FS}^{n+m} \geq O(k^{n+m}j^{n}, k^{n+m+1}j^{n-1}).$$\end{corollary}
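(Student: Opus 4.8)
The plan is to convert the integral, which is weighted by the base quantity $\Lambda_{\omega_B}\Omega - C_\Omega$, into a J-weight of the induced test configuration $(\X, jL+\H)$, and then to quote the vanishing $J_T(\X,jL+\H)=O(j^{n-2})$ established in the preceding Lemma. Throughout I abbreviate $\Psi = \sum_i h_i \frac{|z_i|^2}{\sum_i|z_i|^2} - \frac{w(j,k)}{h(j,k)}$, and I record two facts I will use repeatedly: the weights are $O(k)$, so $\|\Psi\|_\infty = O(k)$ uniformly in $j$ since $\|(\X,jL+\H)\|_\infty = O(1)$ in $j$ by Lemma \ref{lemma:c0expansion}; and $\int_X \omega_{FS}^{n+m} = O(k^{n+m}j^n)$ by Corollary \ref{cor:volforms}.

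First I would trade the base contraction $\Lambda_{\omega_B}\Omega$ for the Fubini--Study contraction. Using the comparison $kj\,\Lambda_{\omega_{FS}}\Omega = \Lambda_{\omega_B}\Omega + O(k^{-1},j^{-1})$ recorded above, together with $kj\,\tilde C_\Omega = C_\Omega + O(k^{-1},j^{-1})$, write
$$\Lambda_{\omega_B}\Omega - C_\Omega = kj\left(\Lambda_{\omega_{FS}}\Omega - \tilde C_\Omega\right) + \vartheta, \qquad \|\vartheta\|_\infty = O(k^{-1},j^{-1}).$$
Substituting splits the integral into a main term carrying $kj$ and an error term $-\int_X \Psi\,\vartheta\,\omega_{FS}^{n+m}$. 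The latter is bounded in absolute value by $\|\Psi\|_\infty\,\|\vartheta\|_\infty \int_X\omega_{FS}^{n+m} = O(k)\cdot O(k^{-1},j^{-1})\cdot O(k^{n+m}j^n)$, which is $O(k^{n+m}j^n, k^{n+m+1}j^{n-1})$, already of the claimed order.

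For the main term I would apply the convexity Proposition of Lejmi--Sz\'ekelyhidi, in its semipositive form, to $(Y,L_Y)=(X, jL+H)$ with $q=m+n$, taking $\beta=\Omega\in c_1(L^r)$ (semipositive on $X$ as it is pulled back from $B$) and $T=L^r$. Since $kj>0$ this gives
$$-kj\int_X \Psi\left(\Lambda_{\omega_{FS}}\Omega - \tilde C_\Omega\right)\omega_{FS}^{n+m} \;\geq\; (n+m)!\,kj\left(k^{n+m}J_T(\X,jL+\H) + O(k^{n+m-1})\right).$$
By the preceding Lemma, $J_T(\X,jL+\H)=O(j^{n-2})$, so the leading contribution is bounded below by $-O(k^{n+m+1}j^{n-1})$. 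For the subleading piece $kj\cdot O(k^{n+m-1})$ I would use that the factor $\Lambda_{\omega_{FS}}\Omega-\tilde C_\Omega = O(j^{-1})$ lowers by one the $j$-degree of every coefficient in the $k$-expansion, relative to an ordinary weight integral whose coefficients have $j$-degree $n$ by Lemma \ref{weight-degree}; hence this error is $O(k^{n+m-1}j^{n-1})$ and contributes $O(k^{n+m}j^n)$. Combining the error term from the substitution with the two pieces of the main term yields the asserted lower bound $O(k^{n+m}j^n, k^{n+m+1}j^{n-1})$.

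The step I expect to be the main obstacle is controlling the $j$-dependence of the subleading-in-$k$ terms produced by the convexity argument, i.e.\ justifying that they carry $j$-degree at most $n-1$ rather than $n$. This relies on the uniformity of the Bergman-kernel expansion over compact families of metrics (as in Donaldson's work) applied uniformly in $j$, combined with the extra factor of $j^{-1}$ gained from contracting the base form $\Omega$ against $\omega_{FS}$. Once this uniformity is in hand, the remainder is the routine combination of the comparison estimates, Corollary \ref{cor:volforms}, and the intersection-theoretic vanishing of the J-weight.
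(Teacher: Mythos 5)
Your proposal follows essentially the same route as the paper: the comparison $kj\Lambda_{\omega_{FS}}\Omega = \Lambda_{\omega_B}\Omega + O(k^{-1},j^{-1})$, the semipositive Lejmi--Sz\'ekelyhidi convexity bound applied with $T=L^r$, and the intersection-theoretic vanishing $J_T(\X,jL+\H)=O(j^{n-2})$. Your explicit bookkeeping of the substitution error and of the $j$-dependence of the subleading-in-$k$ terms is correct and in fact more detailed than the paper's own summary of this step.
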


It is crucial to our argument that, even though the asymptotics of this analogue of the log norm functional are $O(j^{n-1})$, we have a \emph{lower} bound rather than an \emph{upper} bound. We have no conceptual explanation for the favourable sign.

\subsection{Completing the argument} In order to complete the proof of Theorem \ref{l1-bound}, two steps remain. The first is a straightforward application of the convexity results established in Section \ref{sec:kempf-ness}. This reduces to a lower bound in terms of an integral over the central fibre $\X_0$ of the fibration degeneration, and the second step relates this integral to an algebro-geometric invariant, leading to the desired result. 

We begin with the convexity argument, which follows directly from the convexity of the balancing energy due to Luo \cite{luo} and Zhang \cite{zhang}; see also Donaldson \cite[p. 465]{donaldson-semistable}. Let $\X_0$ denote the flat limit of $\phi_{j,k}^t(X)$, where the $\C^*$-action on projective space move the embedding of $X = \phi_{j,k}^1(X)$ to $\phi_{j,k}^t(X)$ for $t \in \C^*$, so that $X = \phi_{j,k}^1(X)$. 

\begin{lemma}\label{lemma:centralfibre} We have a lower bound $$-\int_{\phi_{j,k}^1(X)} \left(\sum_ i \frac{h_i|z_i|^2}{\sum_i |z_i|^2} - \frac{w(j,k)}{h(j,k)}\right)  \omega_{FS}^{m+n}\geq - \int_{\X_0} \left(\sum_ i \frac{h_i|z_i|^2}{\sum_i |z_i|^2} - \frac{w(j,k)}{h(j,k)}\right)  \omega_{FS}^{m+n}.$$ \end{lemma}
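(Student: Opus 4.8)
The plan is to recognise the integrand as the derivative, along the one-parameter orbit, of a convex energy functional, and then to read off the inequality from monotonicity of this derivative together with the identification of the central fibre $\X_0$ with the limiting endpoint of the orbit. This is a balancing-energy convexity argument, exactly of the type used by Luo, Zhang and Donaldson.

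First I would recall the balancing (Kempf--Ness type) energy $\mathcal{Z}$ associated to the linearised $\C^*$-action, whose gradient is precisely the balancing functional
$$\mathcal{B}(\phi_{j,k}^t(X)) = \int_{\phi_{j,k}^t(X)}\left(\sum_i \frac{h_i|z_i|^2}{\sum_i|z_i|^2} - \frac{w(j,k)}{h(j,k)}\right)\frac{\omega_{FS}^{m+n}}{(m+n)!}$$
appearing in Proposition \ref{prop:reduction}; the integrand in the Lemma is $(m+n)!$ times this. Parametrising the orbit by $t = e^{-s}$ with $s \in \R$, the quantity $\mathcal{B}(\phi_{j,k}^{t(s)}(X))$ is, up to the positive constant $(m+n)!$, the derivative $\tfrac{d}{ds}\mathcal{Z}$. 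The convexity of $\mathcal{Z}$ along such orbits is the content of the results of Luo \cite{luo} and Zhang \cite{zhang} (see also Donaldson \cite[p. 465]{donaldson-semistable}), so that $s \mapsto \mathcal{B}(\phi_{j,k}^{t(s)}(X))$ is monotone.

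Next I would compute the two endpoints. At $s=0$ we have $\phi_{j,k}^1(X) = X$, giving the left-hand integral. As $s \to +\infty$, i.e. $t \to 0$, the cycle $\phi_{j,k}^t(X)$ converges to the flat limit $\X_0$; since the Hamiltonian $\sum_i h_i|z_i|^2/\sum_i|z_i|^2$ is a fixed smooth function on the ambient projective space and the integration currents converge, the integral converges to the corresponding integral over $\X_0$. Monotonicity in the correct direction then yields $\mathcal{B}(\phi_{j,k}^1(X)) \leq \lim_{t\to 0}\mathcal{B}(\phi_{j,k}^t(X))$, the latter being the integral over $\X_0$; multiplying through by $-1$ reverses the inequality and gives exactly the claimed bound.

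The point requiring care is the bookkeeping of the sign: one must verify that, with the convention that $\X_0$ is the flat limit as $t \to 0$, the central fibre realises the \emph{supremum} of the monotone derivative rather than its infimum. This is fixed by the direction of the flow $t = e^{-s}$, under which $s \to +\infty$ corresponds to $t \to 0$ and hence to the asymptotic slope of the convex functional $\mathcal{Z}$; the favourable direction is forced by convexity. The only analytic input beyond convexity is the weak convergence of the integration currents $\phi_{j,k}^t(X) \to \X_0$, which is standard for flat degenerations and legitimate here precisely because the integrand extends continuously across $t = 0$.
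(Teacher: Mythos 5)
Your argument is correct and is essentially the paper's own proof: the paper derives this lemma directly from the convexity of the balancing energy along the $\C^*$-orbit, citing Luo, Zhang and Donaldson, exactly as you do. You have simply filled in the standard details (the balancing functional as the derivative of the Kempf--Ness energy, the sign bookkeeping under $t=e^{-s}$, and the convergence of the integration currents to the flat limit $\X_0$), all of which are consistent with the paper's intended reasoning.
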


The function $\frac{w(j,k)}{h(j,k)}$ is a constant depending only on $j,k$, so to evaluate this integral we need only consider the Hamiltonian term. Recall that $b_0(j)$ is the leading order term in $k$ of the weight polynomial $w(j,k)$.

\begin{proposition}{\cite[Proposition 3]{donaldson-semistable} \cite[Theorem 26]{wang-chow-stability}}\label{prop:b0} There is an equivariant Chern-Weil style equality  $$b_0(j)k^{n+m+1} =  \int_{\X_0}\sum_ i \frac{h_i|z_i|^2}{\sum_i |z_i|^2} \frac{\omega_{FS}^{m+n}}{(m+n)!}.$$ \end{proposition}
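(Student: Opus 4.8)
The plan is to deduce this directly from Donaldson's equivariant Chern--Weil identity \cite[Proposition 3]{donaldson-semistable} (equivalently, from Wang \cite[Theorem 26]{wang-chow-stability}), applied to the central fibre $\X_0$ in exactly the manner used in the proof of Proposition \ref{independent-Hamiltonian}. For our fixed $k \gg j \gg 0$, the Kodaira embedding $\phi_{j,k} \colon X \hookrightarrow \pr^{N_{j,k}}$ exhibits $\X_0$ as a $\C^*$-invariant subscheme of $\pr^{N_{j,k}}$ whose hyperplane bundle $\scO_{\X_0}(1)$ is the restriction of $k(jL+\H_0)$, and whose Fubini--Study Hamiltonian is precisely $h = \sum_i \tfrac{h_i |z_i|^2}{\sum_i |z_i|^2}$, with the $h_i$ the weights of the $\C^*$-action on $H^0(X, k(jL+H))$.

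First I would apply Donaldson's identity to the polarised scheme $(\X_0, \scO_{\X_0}(1))$, which has dimension $n+m$: the total weight $\wt H^0(\X_0, \scO_{\X_0}(r))$ is a polynomial in $r$ of degree $n+m+1$ whose leading coefficient equals $\int_{\X_0} h\, \tfrac{\omega_{FS}^{m+n}}{(m+n)!}$, where $\omega_{FS} \in c_1(\scO_{\X_0}(1))$ is the restricted Fubini--Study form and $h$ its moment map. This is the Duistermaat--Heckman normalisation, and the two sides are consistent in $k$, since $h$ takes values of size $O(k)$ while $\int_{\X_0}\omega_{FS}^{m+n}$ is of size $k^{m+n}$. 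The second step is to identify this leading coefficient with $b_0(j)k^{n+m+1}$: because $\scO_{\X_0}(r) = rk(jL+\H_0)$ we have $\wt H^0(\X_0, \scO_{\X_0}(r)) = w(j, rk)$, and substituting $k' = rk$ into the expansion $w(j,k') = b_0(j)(k')^{n+m+1} + \cdots$ shows that the coefficient of $r^{n+m+1}$ is exactly $b_0(j)k^{n+m+1}$. Comparing the two expressions for this leading coefficient gives the claimed equality.

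The only genuine subtlety, and the main point requiring care, is that $\X_0$ is in general neither reduced nor irreducible, so the left-hand integral must be read as an integral over the associated cycle: a sum over the irreducible components of $\X_0$, each weighted by its multiplicity, with the integration on each component carried out over its smooth locus. What legitimises this is precisely that $h$ is the restriction of a globally smooth function on $\pr^{N_{j,k}}$ which is a genuine Hamiltonian on the smooth locus of each component, so no global smoothness of $\X_0$ is needed; this is exactly the scheme-theoretic version of Donaldson's argument already set up and exploited in Section \ref{sec:chern-weil}. I expect this to involve only bookkeeping rather than any new idea.
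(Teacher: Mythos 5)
Your proposal is correct and matches the paper's treatment: the paper gives no independent proof of Proposition \ref{prop:b0}, simply citing Donaldson and Wang, and your argument is a faithful unpacking of how the cited identity applies here — rescaling the weight polynomial via $\scO_{\X_0}(r)=rk(jL+\H_0)$ to extract the leading coefficient $b_0(j)k^{n+m+1}$, and interpreting the integral over the possibly non-reduced, reducible $\X_0$ cycle-theoretically, exactly as the paper does in its discussion surrounding Proposition \ref{independent-Hamiltonian}. No gaps.
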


This is everything we need to prove Theorem \ref{l1-bound}.

\begin{proof}[Proof of Theorem \ref{l1-bound}]

Proposition \ref{prop:reduction} produces a lower bound on $$\|p(\eta)\|_1 \left \|\sum_i h_i |s_i|^2_{h^k_X \otimes h^{kj}_{B}} - \frac{w(j,k)}{k^{m+n} a_0(j)}\right\|_{\infty}$$  in terms of 
\begin{align*} &-\binom{m+n}{n}^{-1} k^{1-m-n} j^{1-n}  \int_X \left(\sum_ i \frac{h_i|z_i|^2}{\sum_i |z_i|^2} - \frac{w(j,k)}{h(j,k)}\right) \omega_{FS}^{m+n}  \\ &- \binom{m+n}{n}^{-1}  k^{-m-n} j^{-n} \int_X \left(\sum_i h_i \frac{|z_i|^2}{\sum_i |z_i|^2}- \frac{w(j,k)}{h(j,k)}\right)(\Lambda_{\omega_B}\Omega -C_{\Omega})  \omega_{FS}^{m+n}. \end{align*}
Lemma \ref{lemma:centralfibre} shows  that the first term of these two terms is bounded below by $$-\binom{m+n}{n}^{-1} k^{1-m-n} j^{-n} \int_{\X_0} \left(\sum_ i \frac{h_i|z_i|^2}{\sum_i |z_i|^2} - \frac{w(j,k)}{h(j,k)}\right)  \omega_{FS}^{m+n},$$ while from Corollary \ref{cor:base-term} the second is bounded below by $O( 1,k j^{-1})$. From Proposition \ref{prop:b0}, the integral $ -\int_{\X_0} \left(\sum_ i \frac{h_i|z_i|^2}{\sum_i |z_i|^2} - \frac{w(j,k)}{h(j,k)}\right)  \frac{\omega_{FS}^{m+n}}{(m+n)!}$ equals $$-k^{m+n+1} b_{0} (j) + k^{m+n}\frac{w(j,k)}{h(j,k)}a_{0}(j) = -k^{m+n+1} \left(b_{0} (j) - \frac{w(j,k)}{k h(j,k)}a_{0}(j) \right) ,$$ where we have used that $$a_{0} (j)k^{m+n}= \int_X \frac{\omega_{FS}^{m+n}}{(m+n)!}.$$ Combining these bounds and expanding the various polynomials, it follows that \begin{align*}&\|p(\eta)\|_1 \left\|\sum_i h_i |s_i|^2_{h^k_X \otimes h^{kj}_{B}} - \frac{w(j,k)}{k^{m+n} a_0(j)}\right\|_{\infty} \\ &\geq - m!n! k^{2} j^{1-n} \left(b_{0} (j) - \frac{w(j,k)}{kh(j,k)}a_{0}(j) \right) + O(1, kj^{-1}) \\ &=-m!n! k j^{1-n} \left( \frac{b_0 (j) a_1(j)}{a_0(j)} - b_1(j) \right) + O(1, k j^{-1}) \\ &=-m!n!  k \left( j\W_0(\X,\H)+\W_1(\X,\H) \right)+ O(1, kj^{-1}).\end{align*}

Lemma \ref{lemma:norm-bound} gives an upper bound $$\left|\sum_i h_i |s_i|^2_{h^k_X \otimes h^{kj}_{B}} - \frac{w(j,k)}{k^{m+n}a_0(j)}\right|_{L^{\infty}} \leq k \| (\X,\H+jL)\|_{\infty},$$ hence implying $$\|p(\eta)\|_1 \geq - m!n!  \frac{j\W_0(\X,\H)+\W_1(\X,\H)}{\|(\X,\H)\|_{\infty}}.$$ The result follows as the choice of $\omega_X$ and fibration degeneration $(\X,\H) \to (B,L)$ were arbitrary, meaning one can take an infimum on the left hand side and a supremum the right hand side.
\end{proof}

\begin{remark} A key element of our proof of Theorem \ref{l1-bound} was the convexity of the usual balancing energy. In particular, we did not rely on the finite-dimensional results of Section \ref{sec:kempf-ness}. Thus it is natural to ask how the finite dimensional geometry of fibrations is related to the ``infinite dimensional'' K\"ahler geometry. This happens in three ways. Firstly, and most importantly, the key notion of a fibration degeneration is precisely motivated by the results of Section \ref{sec:kempf-ness}, where they are shown to arise naturally in GIT. Secondly, the numerical invariant defined to be the GIT weight in Definition \ref{stability-in-finite-dims}  plays exactly the same role in our work as the Chow weight plays in Donaldson's work on K-stability \cite{donaldson-semistable}. Finally, while it seems challenging to directly use the convexity of the moment map produced in Section \ref{sec:kempf-ness}, it is also interesting to note that the associated energy functional appears formally as the leading order term in an expansion of the balancing energy: \begin{align*}\int_{\phi_{j,k}^t(X)} &\left(\sum_ i \frac{h_i|z_i|^2}{\sum_i |z_i|^2} - \frac{w(j,k)}{h(j,k)}\right)  \omega_{FS}^{m+n} \\ & = k^m\int_{\phi_{j,k}^t(X)}  \left(\sum_ i \frac{h_i|z_i|^2}{\sum_i |z_i|^2} - \frac{w(j,k)}{h(j,k)}\right)  \omega_{FS}^m \wedge \omega_B^n + O(k^{m-1}j^n, k^mj^{n-1}).\end{align*} Thus, to leading order, the balancing energy precisely equals the energy functional associated to a moment map in finite dimensions, giving a further conceptual explanation for Theorem \ref{l1-bound} and its links to the results of Section \ref{sec:kempf-ness}.

\end{remark}

\bibliography{stablefibrations}
\bibliographystyle{amsplain}

\end{document}